\newtheorem{theorem}{Theorem}[section]
\newtheorem{proposition}[theorem]{Proposition}
\newtheorem{lemma}[theorem]{Lemma}
\newtheorem{corollary}[theorem]{Corollary}
\theoremstyle{remark}
\newtheorem{rem}[theorem]{Remark}
\def \IP {\mathbb{P}}
\def \N {{\mathbb N}}
\def \P {{\mathbb P}}
\def \R {{\mathbb R}}
\def \Z {{\mathbb Z}}
\def \ba {\begin{array}}
\def \ea {\end{array}}
\def \cE {{\mathcal E}}
\def \cF {{\mathcal F}}
\def \cG {{\mathcal G}}
\def \cI {{\mathcal I}}
\def \cL {{\mathcal L}}
\def\one{\rlap{\mbox{\small\rm 1}}\kern.15em 1}
\def \un {\underline}
\def \ov {\overline}
\def \td {\tilde}
\def \Ll {\left}
\def \Rr {\right}
\def \subset {\subseteq}
\def \d {\mathrm d}
\def \tdj {\td{\jmath}}
\def \emptyset {\varnothing}
\begin{document}

\title[Exponential extinction time of the contact process]{Exponential extinction time of the contact process on finite graphs}

\author[T.\ Mountford, J.-C.\ Mourrat, D.\ Valesin, Q.\ Yao]{Thomas Mountford\textsuperscript{1}, Jean-Christophe Mourrat\textsuperscript{1}, Daniel Valesin\textsuperscript{2,3}
and Qiang Yao\textsuperscript{4}}
\footnotetext[1]{\'Ecole Polytechnique F\'ed\'erale de Lausanne,
D\'epartement de Math\'ematiques,
1015 Lausanne, Switzerland}
\footnotetext[2]{University of British Columbia, Department of Mathematics, V6T1Z2 Vancouver, Canada}
\footnotetext[3]{Research funded by the Post-Doctoral Research Fellowship of the Government of Canada}
\footnotetext[4]{Department of Statistics and Actuarial Science,
East China Normal University,
Shanghai 200241, China}

\begin{abstract}
We study the extinction time $\uptau$ of the contact process on finite trees of bounded degree. We show that, if the infection rate is larger than the critical rate for the contact process on $\Z$, then, uniformly over all trees of degree bounded by a given number, the expectation of $\uptau$ grows exponentially with the number of vertices. Additionally, for any sequence of growing trees of bounded degree, $\uptau$ divided by its expectation converges in distribution to the unitary exponential distribution. These also hold if one considers a sequence of graphs having spanning trees with uniformly bounded degree. Using these results, we consider the contact process on a random graph with vertex degrees following a power law.
Improving a result of Chatterjee and Durrett \cite{CD}, we show that, for any infection rate, the extinction time for the contact process on this graph grows exponentially with the number of vertices.

\bigskip

\noindent \textsc{MSC 2010:} 82C22, 05C80.

\medskip

\noindent \textsc{Keywords:} contact process, interacting particle systems, metastability.

\end{abstract}


\maketitle
%
%
%
%
%
%
%
\section{Introduction}
\label{s:intro}
\setcounter{equation}{0}
The contact process with parameter $\lambda > 0$ on a graph $G = (V, E)$ is a continuous-time Markov process $(\xi_t)_{t \geq 0}$ with state space $\{0,1\}^V$ and generator
\begin{equation}  \label{eq1gen}
\Omega f(\xi) = \sum_{x \in V} \left(f(\phi_x\xi) - f(\xi) \right) + \lambda \cdot \sum_{e\in E}  \left(f(\phi_e\xi) - f(\xi) \right),
\end{equation}
where $f$ is any local function on $\{0,1\}^V$ and, given $x \in V$ and $\{y, z\} \in E,$ we define $\phi_x\xi,\; \phi_{\{y,z\}}\xi \in \{0,1\}^V$ by
$$\phi_x\xi(w) = \left|\begin{array}{ll}0 &\text{if } w = x;\\\xi(w)&\text{otherwise;} \end{array}\right.\qquad \phi_{\{y,z\}}\xi(w) = \left|\begin{array}{ll}\max(\xi(y),\xi(z))&\text{if } w \in \{y,z\};\\\xi(w) &\text{otherwise.} \end{array} \right.$$
Given $A \subset V$, we write $(\xi^A_t)_{t \ge 0}$ to denote the contact process started from the initial configuration that is equal to 1 at vertices of $A$ and 0 at other vertices. When we write $(\xi_t)$, with no superscript, the initial configuration will either be clear from the context or unimportant. We often abuse notation and associate configurations $\xi \in \{0,1\}^V$ with the corresponding sets $\{x\in V: \xi(x) = 1\}$.

The contact process is a model for the spread of an infection in a population. Vertices of the graph (sometimes referred to as \textit{sites}) represent individuals. In a configuration $\xi \in \{0,1\}^V$, individuals in state 1 are said to be \textit{infected}, and individuals in state 0 are \textit{healthy}. Pairs of individuals that are connected by edges in the graph are in proximity to each other in the population. The generator (\ref{eq1gen}) gives two types of transition for the dynamics. First, infected individuals \textit{heal} with rate 1. Second, given two individuals in proximity so that one is infected and the other is not, with rate $\lambda$ there occurs a \textit{transmission}, as a consequence of which both individuals end up infected.

The configuration $\underline{0} \in\{0,1\}^V$ that is equal to zero at all vertices is a trap for $(\xi_t)$. For certain choices of the underlying graph $G$ and the parameter $\lambda$, it may be the case that the probability of the event $\{\underline{0} \text{ is never reached}\}$ is positive even if the process starts from finitely many infected sites. In fact, whether or not this probability is positive does not depend on the set of initially infected sites, as long as this set is nonempty and finite. We say that the process \textit{survives} if this probability is positive; otherwise we say that the process \textit{dies out}.

In order to be able to motivate and state our results, we will now list some of the properties of the contact process for certain choices of the graph $G$, namely: the lattice $\mathbb{Z}^d$, $d$-regular infinite trees and the finite counterparts of these graphs. For proofs of these properties and a detailed treatment of the topic, we refer the reader to \cite{lig85,lig99}.

Let us start with $\Z^d$, the $d$-dimensional integer lattice endowed with edge set $\{\{x,y\}: \|x-y\| = 1\}$, where $\|\cdot\|$ denotes Euclidean distance. In this case, there exists a number $\lambda_c = \lambda_c(\Z^d)$ such that, depending on whether $\lambda < \lambda_c,\;\lambda = \lambda_c$ or $\lambda > \lambda_c$, the process exhibits different behavior; these three regimes are respectively called \textit{subcritical}, \textit{critical} and \textit{supercritical}. It is known that the process survives if and only if it is supercritical. In this case, the process is known to \textit{survive strongly}, meaning that (for any nonempty initial configuration) with positive probability, each site becomes infected at arbitrarily large times:
$$\lambda > \lambda_c \Longrightarrow P\left[\forall x, \forall t, \exists t'> t: \xi_{t'}(x) = 1\right] > 0.$$

The interest in the contact process on trees was prompted after it was discovered in \cite{P} that death and strong survival are not the only possibilities in this case. For $d \geq 2$, let $T_d$ denote the infinite $(d+1)$-regular tree with a distinguished vertex $o$ called the root. The different phases of the process are captured by two constants $\lambda_1(T_d) < \lambda_2(T_d)$. If $\lambda \leq \lambda_1$, $(\xi^{\{o\}}_t)$ dies out, and if $\lambda > \lambda_2$, it survives strongly. If $\lambda \in (\lambda_1,\;\lambda_2]$, then the process \textit{survives weakly}, meaning that it survives but does not survive strongly. This implies that, even though the infection has positive probability of always being present on the graph, each individual site eventually becomes permanently healthy.

If $G$ is a finite graph, the contact process on $G$ dies out. Given $A\subset V$, define $\uptau_G^A = \inf\{t: \xi^A_t = \underline{0}\}$, the \textit{extinction time} for the process started from occupancy in $A$. We may omit the subscript $G$ when the context is clear enough, and simply write $\uptau$ when the contact process is started from full occupancy, that is, $\uptau = \uptau^{\un{1}}$. The distribution of $\uptau$ and the behavior of the process until this time can be very interesting. Consider the graph $\{0,\ldots,n\}^d$ (viewed as a subgraph of $\Z^d$) and the distribution of $\uptau$ for this graph, as $n$ goes to infinity. The three regimes of the infinite-volume process manifest themselves in the following way. If $\lambda < \lambda_c(\Z^d)$, then $\uptau/\log n$ converges in probability to a constant \cite{durliu}. If $\lambda = \lambda_c$, then $\uptau/n \to \infty$ and $\uptau/n^4 \to 0$ in probability \cite{dursctan}. If $\lambda > \lambda_c$, then $\lim_{n\to\infty} \log E[\uptau]/n^d$ exists and $\uptau/E[\uptau]$ converges in distribution to the unit exponential distribution \cite{dursc,tommeta,tomexp}. In the latter case, the process is said to exhibit \textit{metastability}, meaning that it persists for a long time in a state that resembles an equilibrium and then quickly moves to its true equilibrium ($\underline{0}$ in this case). Metastability for the contact process in this setting was also studied in  \cite{eulalia} and \cite{schonmeta}.

For the case of finite trees, the picture is less complete, and the available results concerning the extinction time are contained in \cite{St}. Fix $d \geq 2$, let $T_d^h$ be the finite subgraph of $T_d$ defined by considering up to $h$ generations from the root and again take the contact process started from full occupancy on this graph, with associated extinction time $\uptau$. If $\lambda < \lambda_2$, then there exist constants $c, C>0$ such that $P(ch \leq \uptau \leq Ch) \to 1$ as $h \to \infty$. If $\lambda > \lambda_2$, then for any $\sigma < 1$ there exist $c_1, c_2 >0$ such that
$$P\left[\uptau > c_1 e^{c_2(\sigma d)^h}\right] \to 1 \text{ as } h \to \infty.$$
Notice that the above implies that $\uptau$ is at least as large as a stretched exponential function of the number of vertices, $(d+1)^h$. As far as we know, no results are available concerning finite graphs that are not regular.

\medskip

For $n \in \N$ and $d>0$, let $\Lambda(n,d)$ be the set of all trees with $n$ vertices and degree bounded by $d$, and let $\cG(n,d)$ be the set of graphs having a spanning tree in $\Lambda(n,d)$. In this paper, we prove the following theorems.
\begin{theorem}
\label{thm1main1}
For any $d \geq 2$ and $\lambda > \lambda_c(\Z)$, there exists $c>0$ such that, for any~$n$ large enough,
$$\inf_{T \in \Lambda(n,d)} \frac{\log E[\uptau_T]}{n} \geq c.$$
\end{theorem}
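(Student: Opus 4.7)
The plan is to combine the one-dimensional supercritical extinction estimate with a recursive decomposition of the tree at a centroid. The starting point is the classical block construction for supercritical contact processes on $\Z$: for $\lambda > \lambda_c(\Z)$, the contact process on a path of $L$ vertices has $E[\uptau] \geq e^{c_0 L}$ for some $c_0 = c_0(\lambda) > 0$. Applied to a longest path in any $T \in \Lambda(n,d)$ (whose length is at least of order $\log n$ for trees of bounded degree), this gives an immediate but weak polynomial bound $E[\uptau_T] \geq n^{c_0 c_d}$; the task is to upgrade this to an exponential bound uniformly.

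Given $T \in \Lambda(n,d)$, I would pick a centroid $v$ and split $T$ into two subtrees $T_1, T_2$ sharing only $v$, each of size at most $n/2+1$ and satisfying $|T_1| + |T_2| = n+1$. The central technical ingredient is a multiplicative lemma
\begin{equation*}
E[\uptau_T] \;\geq\; C^{-1}\, E[\uptau_{T_1}]\, E[\uptau_{T_2}],
\end{equation*}
with $C$ depending only on $d$ and $\lambda$. The intuition is metastable: once each side has relaxed to its quasistationary state, its residual lifetime is approximately exponentially distributed with mean $E[\uptau_{T_i}]$; since $T_1$ and $T_2$ interact only through the single vertex $v$, these two ``clocks'' are nearly independent, so $T$ dies only when both fire within a short common window, at rate roughly $1/(E[\uptau_{T_1}] E[\uptau_{T_2}])$. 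Iterating the lemma down to subtrees of fixed size $n_0$ and invoking the polynomial base case there gives, after taking logarithms, $\log E[\uptau_T] \geq (n/n_0)\bigl(\log E[\uptau_{T_0}] - \log C\bigr)$, which is linear in $n$ as soon as $n_0$ is large enough that the polynomial base-case value beats the multiplicative loss $C$.

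The main obstacle is the rigorous justification of the multiplicative lemma, which reduces to quantifying (a) the relaxation time of the process on each $T_i$ to its quasistationary distribution after being perturbed through $v$, and (b) the near-independence of the two extinction clocks mediated by the shared vertex. The natural way to run the argument is in tandem with the approximately-exponential distribution statement (Theorem 2 of the paper), via a bootstrap: a weak a priori exponential tail estimate on $\uptau_{T_i}$ suffices to prove the multiplicative lemma, which then yields the sharp linear-in-$n$ lower bound on $\log E[\uptau_T]$ and in turn sharpens the tail estimate. Uniformity in $T \in \Lambda(n,d)$ is preserved because the centroid split is available on any tree, and the lemma's constant $C$ depends only on $d$ and $\lambda$.
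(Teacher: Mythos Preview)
Your high-level architecture matches the paper's: a recursive split of $T$ into $T_1,T_2$, a multiplicative inequality relating $E[\uptau_T]$ to $E[\uptau_{T_1}]E[\uptau_{T_2}]$, and an induction over scales. The gap is in the multiplicative lemma and the base case, and the two issues are linked.

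You assert $E[\uptau_T]\ge C^{-1}E[\uptau_{T_1}]E[\uptau_{T_2}]$ with $C=C(d,\lambda)$ independent of $n$. The paper does not obtain this, and it is unclear that it can be obtained uniformly over $\Lambda(n,d)$: the ``near-independence'' of the two clocks is governed by the time it takes a re-infection through the shared vertex to restore $T_i$ to its metastable state, and this relaxation/coupling time is at least polynomial in $|T_i|$ (information must cross the diameter). What the paper proves (Corollary~4.7) is
\[
E[\uptau_T]\;\ge\;n^{-9}\,E[\uptau_{T_1}]\,E[\uptau_{T_2}],
\]
and even this already \emph{requires} as input a coupling statement (part~(3) of Proposition~3.2: any surviving process couples with the full-occupancy process within time $n^2$, with error $e^{-n^{\alpha/2}}$). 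That coupling statement is itself proved using the stretched-exponential survival bound.

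With a polynomial-in-$n$ loss in the multiplicative step, your polynomial base case $E[\uptau_{T_0}]\ge n_0^{c_0 c_d}$ from a single longest path is too weak. In the paper's scale-by-scale recursion one gets
\[
V_{r+1}\;\ge\;V_r-\frac{9\log\rho^{r+1}}{\rho^r},
\]
with $V_r=\inf_{|T|\in(\rho^{r-1}/d,\rho^r]}\log E[\uptau_T]/|T|$. If the only a~priori information is $V_r\gtrsim r\rho^{-r}$ (which is what the longest-path bound gives), the relative loss is of order $1/r$, not summable, and one cannot conclude $\liminf V_r>0$. The paper therefore first proves a genuinely stronger a~priori bound,
\[
E[\uptau_T]\;\ge\;\bar c_2\,e^{\bar c_2 n^{\alpha}}\qquad\text{for some }\alpha>0,
\]
uniformly over $\Lambda(n,d)$ (Proposition~3.2). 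This is done by finding $n^{\beta}$ \emph{disjoint} intervals of length $\sim\log n$ inside $T$ and running a random-walk comparison on the number of ``good'' intervals. With this input, $V_r\ge c\rho^{-r(1-\alpha)}$, the relative loss is $\rho^{-r\alpha/2}$, and the infinite product $\prod_r(1-\rho^{-r\alpha/2})$ is positive.

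In short: your bootstrap sketch needs a stretched-exponential, not polynomial, seed; and the multiplicative lemma you invoke will carry a polynomial-in-$n$ prefactor that has to be absorbed by that seed. Supplying the stretched-exponential bound is a substantial independent argument (Section~3 of the paper), not a consequence of the longest-path estimate.
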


\begin{theorem}
\label{thm2main2}
Let $d \geq 2$, $\lambda > \lambda_c(\Z)$, and $G_n \in \cG(n,d)$. The distribution of $\uptau_{G_n}/E[\uptau_{G_n}]$ converges to the unitary exponential distribution as $n$ tends to infinity.
\end{theorem}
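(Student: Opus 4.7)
The plan splits into three stages: a monotonicity reduction, a loss-of-memory estimate at a sub-exponential time scale, and a standard argument that turns loss of memory into an exponential limit law.

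\textbf{Stage 1 (monotonicity).} Pick a spanning tree $T_n\in\Lambda(n,d)$ of $G_n$ and couple the contact processes on $T_n$ and $G_n$ through a single graphical representation in which the recovery marks and the transmission Poisson processes on the edges of $T_n$ are shared, with independent additional transmissions on the edges of $G_n\setminus T_n$. By attractiveness, $\xi_t^{T_n,A}\subset\xi_t^{G_n,A}$ for every initial $A$ and every $t\ge 0$, so $\uptau_{G_n}\ge\uptau_{T_n}$ pointwise. Theorem~\ref{thm1main1} then yields a constant $c>0$, independent of $G_n$, with $E[\uptau_{G_n}]\ge e^{cn}$ for all $n$ large.

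\textbf{Stage 2 (loss of memory).} The technical heart is to exhibit a time scale $t_n\to\infty$ with $\log t_n=o(n)$ (polynomial in $n$ should suffice) for which, on the common graphical representation that already gives $\xi^A\subset\xi^{\un 1}$,
\[
\sup_{A\ne\emptyset}\; P\bigl[\,\xi^{G_n,A}_{t_n}\ne\xi^{G_n,\un 1}_{t_n},\; \xi^{G_n,A}_{t_n}\ne\emptyset\,\bigr]\longrightarrow 0.
\]
In other words, on the event that $A$ has survived to time $t_n$, the forward infection issued from $A$ has coalesced with $\xi^{\un 1}_{t_n}$. I would proceed by embedding a supercritical one-dimensional contact process along long paths in the spanning tree $T_n$ (using $\lambda>\lambda_c(\Z)$), and then combining this linear spreading with a duality/coalescence argument that controls the defect $\xi^{\un 1}_{t_n}\setminus\xi^A_{t_n}$ site by site. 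The bounded-degree assumption is used both to guarantee that sufficiently long paths exist in $T_n$ and to keep the combinatorial cost of the comparison under control.

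\textbf{Stage 3 (exponential law).} Set $\alpha_n=E[\uptau_{G_n}]$ and $f_n(t)=P[\uptau_{G_n}>t]$. A standard restart argument based on monotonicity gives $P[\uptau_{G_n}\le t_n]\le t_n/\alpha_n$, so $f_n(t_n)\to 1$ as soon as $t_n=o(\alpha_n)$. Conditioning on $\xi^{\un 1}_t$ through the Markov property and replacing $\xi^{\un 1}_t$ on survival by a configuration whose post-$t_n$ evolution is, by Stage~2, equivalent to that of $\xi^{\un 1}$, yields the approximate multiplicativity
\[
f_n(s+t)\approx f_n(s)\,f_n(t)\qquad\text{for }s,t\ge t_n,
\]
and iterating over windows of length $t_n$ produces $f_n(s\alpha_n)\to e^{-s}$ for every $s>0$, which is convergence of $\uptau_{G_n}/\alpha_n$ to the unitary exponential.

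\textbf{Main obstacle.} The hard part is Stage~2. Because $G_n$ is only assumed to contain a bounded-degree spanning tree, the argument must be extracted entirely from the tree structure and then lifted to $G_n$ by the attractiveness coupling, and it must hold uniformly in $G_n\in\cG(n,d)$ and in the initial configuration $A$; Stages~1 and~3 are essentially soft once this coalescence estimate is in hand.
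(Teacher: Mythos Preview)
Your proposal is correct and follows essentially the same route as the paper: Stage~2 is precisely part~(3) of Proposition~\ref{p:expalpha} (proved there with $t_n=n^2$ via the path/interval embedding and duality argument you sketch), and Stage~3 is the content of \cite[Proposition~2.1]{tommeta}, which the paper simply cites. One minor point: your Stage~1 invokes Theorem~\ref{thm1main1}, but the weaker sub-exponential bound of Proposition~\ref{p:expalpha}(1) already gives $t_n/\alpha_n\to 0$, and in fact the paper proves Theorem~\ref{thm2main2} before Theorem~\ref{thm1main1}, so no circularity arises either way.
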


\begin{theorem}
\label{thm3main3}
Let $d \geq 2$ and $\lambda> \lambda_c(\Z)$. There exists $c >0$ such that
$$\inf_{T \in \Lambda(n,d)} P\left[\uptau_T \ge e^{cn}\right] \to 1 \text{ as } n \to \infty.$$
\end{theorem}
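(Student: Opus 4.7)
The plan is to derive Theorem 3 by combining Theorem 1 and Theorem 2 as black boxes. Theorem 1 provides a uniform lower bound $E[\uptau_T] \geq e^{c_1 n}$ for $T \in \Lambda(n,d)$ and $n$ large, while Theorem 2 tells us that $\uptau_T / E[\uptau_T]$ cannot concentrate much below $1$. Setting $c = c_1/2$ should then work because $e^{cn}$ becomes negligible compared with $E[\uptau_T]$ as $n$ grows.

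The first step is to upgrade the sequence-style statement of Theorem 2 to a uniform one: for every $t, \varepsilon > 0$ there exists $N$ such that
\[
\bigl|P[\uptau_T / E[\uptau_T] > t] - e^{-t}\bigr| < \varepsilon \qquad \text{for all } n \geq N \text{ and all } T \in \Lambda(n,d).
\]
If this failed, one could extract an increasing subsequence $n_k$ and trees $T_k \in \Lambda(n_k, d) \subset \cG(n_k, d)$ along which the bound is violated. Filling in the remaining indices by arbitrary members of $\cG(n,d)$ produces a sequence $G_n \in \cG(n,d)$ to which Theorem 2 applies, forcing convergence along $n_k$ and contradicting the choice of $T_k$.

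With this uniform estimate in hand, set $c = c_1/2$ and fix $\varepsilon > 0$. Choose $\delta \in (0,1)$ with $e^{-\delta} > 1 - \varepsilon/2$, and take $n$ large enough that $e^{cn} \leq \delta\, e^{c_1 n}$ and the uniform version of Theorem 2 holds for the pair $(\delta, \varepsilon/2)$. For every $T \in \Lambda(n,d)$, Theorem 1 then gives $e^{cn} \leq \delta\, E[\uptau_T]$, so
\[
P[\uptau_T \geq e^{cn}] \;\geq\; P[\uptau_T \geq \delta\, E[\uptau_T]] \;>\; e^{-\delta} - \varepsilon/2 \;>\; 1 - \varepsilon.
\]
Taking the infimum over $T \in \Lambda(n,d)$ and letting $\varepsilon \downarrow 0$ yields Theorem 3.

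The only subtle point in this scheme is the upgrade from pointwise to uniform convergence in step one; once it is in place, the rest is a deterministic manipulation of the two bounds. No new contact-process-specific argument is required beyond what is already encapsulated in Theorems 1 and 2, which is why this deduction appears last in the sequence.
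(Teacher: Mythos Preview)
Your argument is correct, but the paper proceeds differently and more directly. Instead of invoking Theorem~\ref{thm2main2}, the paper uses only Theorem~\ref{thm1main1} together with the elementary submultiplicativity bound of Lemma~\ref{l:attract}, namely $P[\uptau \le s] \le s/(s+E[\uptau])$, which follows in two lines from attractiveness via $P[\uptau \ge rs] \le (P[\uptau \ge s])^r$. Plugging in $s = e^{cn/2}$ with $c$ from Theorem~\ref{thm1main1} immediately gives $P[\uptau \le e^{cn/2}] \le e^{-cn/4}$ uniformly over $\Lambda(n,d)$. Your route through Theorem~\ref{thm2main2} is heavier, since that theorem rests on the full metastability machinery of \cite{tommeta} and on part~(3) of Proposition~\ref{p:expalpha}; your compactness-style upgrade from sequential to uniform convergence is valid, but all of this is avoidable. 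The upside of your approach is that it treats Theorems~\ref{thm1main1} and~\ref{thm2main2} purely as black boxes, with no further contact-process input; the paper's approach trades that modularity for a shorter and more self-contained argument.
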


By attractiveness, one can replace $\Lambda(n,d)$ by the set of all graphs having a subgraph in $\Lambda(n,d)$ in Theorems~\ref{thm1main1} and \ref{thm3main3}, and in particular, one can replace $\Lambda(n,d)$ by $\cG(n,d)$. For instance, the above results cover the case of any sequence of increasingly large connected subsets of $\Z^d$. At the cost of requiring $\lambda > \lambda_c(\Z)$, we thus recover and extend previously mentionned results, without any strong assumption on the regularity of the graph. For these values of $\lambda$, this shows in particular that on regular trees with finite depth, the extinction time is not only larger than a stretched exponential function of the number of vertices, but actually an exponential function.

In order to exemplify further the usefulness of our results, we then consider the contact process on Newman-Strogatz-Watts (NSW) random graphs, as considered in \cite{NSW} and \cite{CD}. Let us define them. For any $n \in \N$, we construct a graph $G^n$ on $n$ vertices. The vertex set is simply $\{1, \ldots, n\}$. The random set of edges will be constructed from a probability $p$ on $\{3,4,\ldots\}$ with the property that, for some $a>1$, $c_0 = \lim_{m\to\infty} p(m)/m^a$ exists and is in $(0, \infty)$. We let $d_1, \cdots, d_n$ be independent random variables distributed according to $p$, and conditioned on the event that $d_1 + \cdots + d_n$ is even. Next, from each vertex $i \in \{1, \ldots, n\}$ we place $d_i$ \textit{half-edges}; when two half-edges are connected, an edge is formed. We pair up the $d_1+\cdots+d_n$ half-edges in a random way that is uniformly chosen among all possibilities. Note that this can produce multiple edges between two vertices and also loops (edges that start and finish at the same vertex). We then take the contact process with parameter $\lambda > 0$ on this random graph. Notice that the generator given by (\ref{eq1gen}) does not exclude the case of multiple edges or loops: the latter have no effect in the dynamics and the former increase the rate of transmission between vertices.

Let us write $\mathbb{P}$ to denote a probability measure under which both the random graph and the contact process on this graph are defined. In \cite{CD}, it is shown that, for any $\lambda > 0$ and any $\delta > 0$, we have $\IP[\uptau(G^n) \ge e^{n^{1-\delta}}] \to 1$ as $n \to \infty$. We improve this and show that
\begin{theorem}
\label{thm1cd1}
For any $\lambda > 0$, there exists $c > 0$ such that
$$\mathbb{P}\left[\uptau_{G^n} \ge e^{cn} \right] \to 1 \text{ as } n \to \infty.$$
\end{theorem}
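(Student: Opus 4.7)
The plan is to reduce to Theorem~\ref{thm3main3} by extracting from $G^n$ a linear-sized backbone of ``star'' vertices on which a coarse-grained version of the contact process dominates a supercritical contact process on a bounded-degree tree. Because $\lambda$ may be arbitrarily small, no direct comparison with the contact process on $\Z$ is possible; the gain in effective infection rate must come from renormalization, exploiting the fact (essentially established in \cite{CD}) that a vertex of degree $K$, once infected, retains infection for a time of order $\exp(c_1 \lambda^2 K)$ with probability bounded below for a suitable $c_1>0$, so that on any time scale much smaller than this, such a vertex behaves effectively as a permanent transmitter.

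Fix $K=K(\lambda)$ large, to be tuned at the end. Call a vertex a \emph{star} if its degree in $G^n$ is at least $K$. The power-law hypothesis $p(m)\sim c_0 m^{-a}$ together with the law of large numbers ensures that, with high probability, $G^n$ contains at least $\beta n$ stars for some $\beta=\beta(K)>0$. Using the configuration-model description, I would then show, by a first- and second-moment computation exploiting the local tree-likeness of sparse random graphs, that with high probability one can select a linear-sized subset $S_n$ of the stars and build on $S_n$ an auxiliary graph $H_n$ whose edges are short vertex-disjoint paths in $G^n$ joining two stars through non-star intermediates, admitting a spanning tree $T_n$ of degree uniformly bounded by some $D=D(K)$. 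The construction is greedy: at each step pick an unused star, attach a bounded number of short vertex-disjoint paths reaching fresh stars, discard stars used as intermediates, and continue until a positive fraction of $S_n$ has been exhausted.

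To conclude, couple the contact process on $G^n$ with a coarse-grained process on $T_n$ at time scale $\Delta$ chosen so that $1\ll \Delta\ll \exp(c_1\lambda^2 K)$. A direct Poissonian estimate on the graphical construction shows that an infected star successfully transmits along each of its $T_n$-edges within a window of length $\Delta$ with probability tending to $1$ as $K\to\infty$, while the star-survival bound ensures that stars remain infected across many such windows. A standard block argument then produces a stochastic domination of the coarse-grained process by a contact process on $T_n$ with effective rate $\lambda_{\mathrm{eff}}(K)>\lambda_c(\Z)$, once $K$ is large enough. Applying Theorem~\ref{thm3main3} to $T_n$ --- a bounded-degree spanning tree of a subgraph of $G^n$, so the remark following the statement of that theorem applies --- gives $\uptau_{G^n}\ge e^{cn}$ with high probability. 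The hardest step will be the construction of $T_n$: because NSW graphs have a few vertices of enormous degree whose star-neighborhoods overlap heavily, the pruning must simultaneously guarantee linear volume, bounded degree, and enough independence of the paths serving as edges of $T_n$ for the coupling in the last step to close.
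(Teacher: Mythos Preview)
Your plan is essentially the one carried out in the paper: isolate a linear-sized family of high-degree ``star'' vertices, thread them into a bounded-degree tree via short paths in $G^n$, and use the long local survival near stars to show that a coarse-grained process on this tree is supercritical, then invoke the tree result. Two technical choices in the paper differ from yours and are worth adopting. First, the paper selects stars with degree in a \emph{window} $[S,2S]$ rather than $\ge K$; this upper cap is what makes the exploration (their ``pass'' algorithm) yield a tree of degree at most $4$ and sidesteps exactly the overlap issue you flag at the end. Second, the coarse-grained process you obtain is intrinsically discrete-time (you look at $\xi$ on a lattice of times $k\Delta$), so you cannot literally apply Theorem~\ref{thm3main3}; the paper inserts a short section (Section~\ref{s:discrete}) stating the discrete-time analogue and then appeals to Liggett--Schonmann--Stacey to dominate the dependent block variables by an i.i.d.\ Bernoulli field. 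With those two adjustments your outline matches the paper's proof.
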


Although it would be simple to deduce Theorem~\ref{thm1cd1} from Theorem~\ref{thm3main3} assuming $\lambda > \lambda_c(\Z)$, we stress that here we cover any non-zero infection parameter. Theorem \ref{thm1cd1} is true for all $a>2$, but we only give the proof for $a > 3$, which is the harder case (when we increase $a$, the degrees of the vertices become stochastically smaller, so the graph is less connected and the contact process survives for a shorter time). Nevertheless in the case $a>3$ we have the advantage that the law $p$ has finite second moment.

\medskip

Let us briefly explain the proofs of our results and how they are organized in the paper. Section~\ref{s:remind} is a brief reminder on some properties of the contact process that will be useful for our purposes. In Section \ref{s:metastab}, we show a weaker version of Theorem~\ref{thm1main1}, which states that the expectation of the extinction time is larger than $e^{cn^{\alpha}}$ for some $\alpha > 0$. In order to do this, we consider two cases: either the tree contains a large segment, or it contains a large number of disjoint smaller segments. In the first case, the result follows from the known behavior of the extinction time on finite intervals of $\Z$. In the second case, we adapt an argument of \cite{CD} and show that, even if the segments are not too large, the time scale of extinction in individual segments is large enough for the infection to spread to other, possibly inactive, segments, so that the segments can jointly sustain activity for the desired amount of time. At this point, using a general metastability argument from \cite{tommeta}, we prove Theorem \ref{thm2main2}.

Given a tree $T \in \Lambda(n,d)$, we decompose it into two subtrees $T_1, T_2$ by removing an edge; we argue that this can be done so that $T_1$ and $T_2$ both contain a non-vanishing proportion of the vertices of $T$. In Section \ref{s:comparison2}, we bound the contact process $(\xi_t)_{t \ge 0}$ on $T$ from below by a pair of processes $(\zeta_{T_1,t})_{t \ge 0}$ on $T_1$ and $(\zeta_{T_2,t})_{t \ge 0}$ on $T_2$. The process $\zeta_{T_1}$ evolves as a contact process on $T_1$ until extinction. However, once extinct, the process stays extinct for some time, and then, as the Phoenix, it rises back from the ashes. This rebirth of the process reflects the fact that, as long as the true process $\xi$ has not died out, the tree $T_1$ constantly receives new infections that can restore its activity. The process $\zeta_{T_2}$ evolves independently, following the same rules. We show that the true process $\xi$ dominates $\zeta_{T_1} \cup \zeta_{T_2}$ up to the extinction of $\xi$, with probability close to $1$. With this comparison at hand, we argue that, modulo a factor that is polynomial in the number of vertices, the expected extinction time for $T$ is larger than the product of the expected extinction times for $T_1$ and $T_2$. This, together with the lower bound $e^{cn^\alpha}$ mentioned in the last paragraph, is then used to prove Theorem~\ref{thm1main1}, from which Theorem~\ref{thm3main3} follows.

In Section \ref{s:discrete}, we re-state some of the results explained above for a discrete-time version of the contact process.

In Section \ref{s:nsw}, we turn to the NSW random graph $G^n$. We present an algorithm that finds with high probability a certain subgraph $G'$ of $G^n$ containing a large quantity of vertices with degree above a certain threshold $M$ ($M$ depends on $\lambda$ but not on $n$). The algorithm also guarantees that most of these vertices are not isolated from other vertices with degree above $M$. Next, a tree $T$ and a mapping $\theta$ from the vertices of $T$ to those of $G'$ are given. $\theta$ has the properties that, for any $x$, $\theta(x)$ has degree larger than $M$ and, if $x, y$ are neighbors in $T$, then $\theta(x)$ and $\theta(y)$ are not far from each other in $G'$. By considering $(\xi_t \cap G')$ only at values of $t$ that are integer multiples of a large constant $R$, we then define a discrete-time version of the contact process on $T$, denoted $(\eta_k)_{k \geq 1}$. The construction is such that, if a vertex $\theta(x)$ of $G'$ has many infected neighbors in the configuration $\xi_{k\cdot R}\cap G'$, we have $\eta_k(x) = 1$. The key idea is that, on $G'$, around vertices of degree above $M$, the infection has high probability of persisting for more than $R$ units of time, and during this period, of propagating far enough that other vertices of high degree are reached; this is then interpreted as a transmission in the process $(\eta_k)$. Even if the parameter $\lambda$ is very small, we can construct $T$ and $\theta$ so that, if $n$ is large enough, $(\eta_k)$ has parameter $\lambda'$ larger than the critical parameter for the one-dimensional contact process. We then apply our results to conclude that $(\eta_k)$, and consequently $(\xi_t)$, survive for a long time.

\medskip

\noindent \textbf{Notations.} For $x \in \R$, we write $\lfloor x \rfloor$ for the integer part of $x$. If $A$ is a set, $|A|$ denotes its cardinality. When talking about the size of a graph, we always mean its number of vertices.

%
%
%
%
%
%
%
\section{A reminder on the contact process}
\label{s:remind}
\setcounter{equation}{0}

We start this section by presenting the graphical construction of the contact process and its self-duality property. Fix a graph $G = (V,E)$ and $\lambda > 0$. We take the following family of independent Poisson point processes on $[0,\infty)$:
$$\begin{array}{ll}
(D^x): x \in V &\text{with rate } 1;\\
(N^e): e \in E &\text{with rate } \lambda.\end{array}$$
Let $H$ denote a realization of all these processes. Given $x,y\in V,\; s \leq t$, we say that $x$ and $y$ are connected by an \textit{infection path in $H$} (and write $(x,s)\leftrightarrow (y,t)$ in $H$) if there exist times $t_0 = s < t_1 < \cdots < t_k = t$ and vertices $x_0 = x, x_1, \ldots, x_{k-1} = y$ such that
\begin{itemize}
\item[$\bullet$] $D^{x_i} \cap (t_i,\; t_{i+1}) = \emptyset$ for $i = 0, \ldots, k - 1$;
\item[$\bullet$] $\{x_i,x_{i+1}\}\in E$ for $i = 0, \ldots, k-2$;
\item[$\bullet$] $t_i \in N^{x_{i-1}, x_i}$ for $i = 1, \ldots, k-1$.
\end{itemize}
Points of the processes $(D^x)$ are called \textit{death marks} and points of $(N^e)$ are \textit{links}; infection paths are thus paths that traverse links and do not touch death marks. $H$ is called a \textit{Harris system}; we often omit dependence on $H$. For $A, B \subset V$, we write $A\times\{s\} \leftrightarrow B \times \{t\}$ if $(x,s)\leftrightarrow (y,t)$ for some $x \in A$, $y \in B$. We also write $A\times \{s\} \leftrightarrow (y,t)$ and $(x,s) \leftrightarrow B\times\{t\}$. Finally, given another set $C \subset V$, we write $A \times \{s\} \leftrightarrow B \times \{t\}$ \textit{inside} $C$ if there is an infection path from a point in $A\times \{s\}$ to a point in $B\times\{t\}$ and the vertices of this path are entirely contained in $C$.

Given $A \subset V$, put
\begin{equation}\label{eq1harris} \xi^A_t(x) = \mathds{1}_{\{A \times \{0\} \leftrightarrow (x,t)\}} \text{ for } x \in V,\; t \geq 0\end{equation}
(here and in the rest of the paper, $\mathds{1}$ denotes the indicator function). It is well-known that the process $(\xi^A_t)_{t\geq 0} = (\xi^A_t(H))_{t\geq 0}$ thus obtained has the same distribution as that defined by the infinitesimal generator (\ref{eq1gen}).  The advantage of (\ref{eq1harris}) is that it allows us to construct in the same probability space versions of the contact processes with all possible initial distributions. From this joint construction, we also obtain the \textit{attractiveness} property of the contact process: if $A \subset B \subset V$, then $\xi^A_t(H) \subset \xi^B_t(H)$ for all $t$. From now on, we always assume that the contact process is constructed from a Harris system, and will write $P_{G,\lambda}$ to refer to a probability measure under which such a system (on graph $G$ and with rate $\lambda$) is defined; we usually omit $G,\lambda$.

Now fix $A \subset V,\; t > 0$ and a Harris system $H$. Let us define the \textit{dual process} $(\hat \xi^{A, t}_s)_{0 \leq s \leq t}$ by
$$\hat \xi^{A,t}_s(y) = \mathds{1}_{\{(y,t-s)\leftrightarrow A \times \{t\} \text{ in } H\}}.$$
If $A = \{x\}$, we write $(\hat \xi^{x,t}_s)$.
This process satisfies two important properties. First, its distribution (from time 0 to $t$) is the same as that of a contact process with same initial configuration. Second, it satisfies the \textit{duality equation}
\begin{equation}\xi^A_t \cap B \neq \emptyset \text{ if and only if } A \cap \hat \xi^{B,t}_t \neq \emptyset. \end{equation}
In particular,
\begin{equation}\xi^{\underline 1}_t(x) = 1 \text{ if and only if } \hat \xi^{x,t}_t \neq \emptyset, \end{equation}
where $(\xi^{\underline 1}_t)$ is the process started from full occupancy.

\medskip

We now recall classical results about the contact process on an interval.
\begin{proposition}
\label{cpinterval}
For $n \in \Z_+$, $A \subset \Z_+$, let
$$
\sigma_n^A = \inf \Ll\{t \ge 0 : \xi_t^A(n) = 1\Rr\},
$$
where $(\xi_t^A)_{t \ge 0}$ denotes the contact process on $\Z_+$ with initial configuration $A$. For any $\lambda > \lambda_c(\Z)$, there exists $\ov{c}_1 > 0$, $n_0$ such that the following results hold.
\begin{enumerate}
\item
For any $n$,
$$
P\Ll[\sigma_n^{\{0\}} < \frac{n}{\ov{c}_1}\Rr] > \ov{c}_1.
$$
\item
For any $A \subset \{0,\ldots,n\}$ and any $n \ge n_0$,
$$
P\Ll[ \sigma_0^A + \sigma_n^A \ge \frac{n}{\ov{c}_1}, \ \xi^A_{n/\ov{c}_1} \neq \un{0} \Rr] \le e^{-n}.
$$
\item
If $(\xi_{n,t}^{\un{1}})_{t \ge 0}$ denotes the contact process on $\{0,\ldots,n\}$ started with full occupancy, then for any $n \ge n_0$ and any $t \ge 0$, we have
$$
P\Ll[ \xi_{n,t}^{\un{1}} = \un{0} \Rr] \le t e^{-\ov{c}_1 n}.
$$
\end{enumerate}
\end{proposition}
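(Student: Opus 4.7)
The plan is to derive all three statements from the classical theory of the supercritical contact process on $\Z$. For Part (1), the key input is the positive asymptotic speed of the right edge. Because $\lambda > \lambda_c(\Z)$, a half-plane oriented percolation comparison shows that the process on $\Z_+$ started from $\{0\}$ survives with some fixed probability $\rho > 0$, and on this survival event $r_t^{\{0\}}/t \to \alpha$ almost surely for a positive asymptotic speed $\alpha = \alpha(\lambda)$ (Durrett--Griffeath shape theorem). Concentration then gives $P[r_T^{\{0\}} \ge \alpha T/2] \ge \rho/2$ for $T$ large; substituting $T = 2n/\alpha$ yields $P[\sigma_n^{\{0\}} \le 2n/\alpha] \ge \rho/2$, and taking $\ov{c}_1$ small enough (and possibly decreasing it to absorb finitely many small $n$) produces the claim.

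For Part (2), I would set $t = n/\ov{c}_1$ and observe that $\sigma_0^A + \sigma_n^A \ge t$ forces $\sigma_0^A \ge t/2$ or $\sigma_n^A \ge t/2$, so by symmetry it suffices to bound $P[\sigma_n^A \ge t/2,\, \xi_t^A \ne \un{0}]$ by $e^{-n}/2$. Since $\xi_t^A \ne \un{0}$ forces $\xi_{t/2}^A \ne \un{0}$ (absorption), and $\sigma_n^A \ge t/2$ forces $r_{t/2}^A < n$, this event lies in $\{r_{t/2}^A < n,\,\xi_{t/2}^A \ne \un{0}\}$. Using the additivity $\xi_{t/2}^A = \bigcup_{y \in A} \xi_{t/2}^{\{y\}}$ of the graphical construction, a union bound reduces the task to bounding, for each $y \in A$, the quantity $P[r_{t/2}^{\{y\}} - y < n - y,\,\xi_{t/2}^{\{y\}} \ne \un{0}]$. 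For $\ov{c}_1$ chosen so that $n - y \le n \ll \alpha t/2$, the single-site edge large-deviation estimate $P[r_s^{\{0\}} \le \beta s,\,\xi_s^{\{0\}} \ne \un{0}] \le e^{-c(\alpha - \beta)s}$ controls each summand, giving an $e^{-c' n/\ov{c}_1}$ bound per $y$; summing over the at most $n+1$ values of $y$ and combining with the symmetric left-edge estimate yields the required $e^{-n}$ upon further decreasing $\ov{c}_1$.

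For Part (3), let $\tau_n = \inf\{s : \xi_{n,s}^{\un{1}} = \un{0}\}$. The only way $(\xi_{n,s}^{\un{1}})$ can jump to $\un{0}$ is through a death event at the unique remaining infected site, and deaths occur at rate $1$, so
\begin{equation*}
\frac{d}{ds}P[\tau_n \le s] \;=\; P\bigl[|\xi_{n,s}^{\un{1}}| = 1\bigr],
\end{equation*}
and hence $P[\tau_n \le t] \le t \sup_{s \ge 0} P[|\xi_{n,s}^{\un{1}}| = 1]$. It then remains to show $\sup_s P[|\xi_{n,s}^{\un{1}}| = 1] \le e^{-\ov{c}_1 n}$, which I would extract from the Bezuidenhout--Grimmett block construction: for $\lambda > \lambda_c(\Z)$ one couples $(\xi_{n,s}^{\un{1}})$ with a supercritical oriented percolation on a renormalized space-time grid, and a configuration consisting of a single infected site forces an atypical ``dual crossing'' in this percolation whose probability decays exponentially in $n$, uniformly in $s$.

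The main obstacle, as I see it, is the uniform-in-$s$ density estimate in Part (3): while block constructions readily yield lower bounds on the expected survival time, obtaining a bound valid at every time $s$ requires tracking the interplay between spatial density and metastable fluctuations across all time scales, and it is here that one must appeal to the finer results of Durrett--Schonmann, Mountford, or Andjel--Schinazi.
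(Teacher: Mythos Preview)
The paper does not give a detailed proof; it simply refers to the classical renormalization argument comparing the supercritical contact process with supercritical oriented percolation (Liggett, Corollary~VI.3.22). Your proposal is essentially a more explicit unpacking of the consequences of that same machinery (edge speed, edge large deviations, block construction), so in spirit it matches the paper's one-line proof.

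Two comments on the details. In Part~(2), the process is on~$\Z_+$, not~$\Z$, so your appeal to ``symmetry'' between $\sigma_0^A$ and $\sigma_n^A$ is not literal, and the single-site edge large-deviation bound $P[r_s^{\{0\}}\le\beta s,\ \xi_s^{\{0\}}\ne\un 0]\le e^{-c(\alpha-\beta)s}$ that you invoke is usually stated on~$\Z$. Transferring it to $\Z_+$ needs a word: the cleanest fix is to bypass the edge process entirely and work directly with the block construction, which is insensitive to the boundary at~$0$.

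In Part~(3), your density identity is correct and elegant, but as you yourself flag, the resulting uniform-in-$s$ bound $\sup_s P[|\xi_{n,s}^{\un 1}|=1]\le e^{-\ov c_1 n}$ is not obviously easier than Part~(3) itself; you have traded one estimate for another of comparable depth. The route implicit in the paper's citation is more direct: after renormalization, the event $\{\xi^{\un 1}_{n,t}=\un 0\}$ forces a closed dual cut across the width-$\Theta(n)$ oriented-percolation slab at some renormalized time level in $[0,O(t)]$; a union bound over those $O(t)$ levels, each costing $e^{-cn}$ by a Peierls estimate, gives $te^{-\ov c_1 n}$ without any intermediate density statement.
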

This follows from the classical renormalization argument that compares the contact process with supercritical oriented percolation, see for instance the proof of \cite[Corollary~VI.3.22]{lig85}.

%
%
%
%
%
%
%
\section{Metastability}
\label{s:metastab}
\setcounter{equation}{0}
We begin with the following basic graph-theoretic observation.
\begin{lemma}
\label{lem1}
For a tree $T \in \Lambda(n,d)$, there exists an edge whose removal separates
$T$ into two subtrees $T_1$ and $T_2$ both of size at least $\lfloor n/d \rfloor$.
\end{lemma}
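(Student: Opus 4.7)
The plan is to combine the existence of a \emph{centroid} of a tree with a pigeonhole argument that consumes the degree bound. Recall that every tree $T$ on $n \geq 1$ vertices possesses a centroid, i.e., a vertex $v$ such that every connected component of $T \setminus \{v\}$ has cardinality at most $\lfloor n/2 \rfloor$. One can produce such a $v$ directly: starting at any vertex, while some neighbor's side has size exceeding $n/2$, step into that neighbor. Since the component left behind at each step has size strictly less than $n/2$, the procedure never revisits a vertex, so it terminates, and its endpoint is a centroid.

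Given a centroid $v$, I would root $T$ at $v$ and let $c_1, \ldots, c_k$ be its children, writing $s_i$ for the size of the subtree of $T$ rooted at $c_i$. Here is where the hypothesis $T \in \Lambda(n,d)$ enters: the degree bound forces $k \leq d$, and since $s_1 + \cdots + s_k = n - 1$, the pigeonhole principle yields an index $i^*$ with
\[
s_{i^*} \;\geq\; \left\lceil \frac{n-1}{d} \right\rceil \;\geq\; \left\lfloor \frac{n}{d} \right\rfloor.
\]
The centroid property, on the other hand, gives $s_{i^*} \leq \lfloor n/2 \rfloor$. Because $d \geq 2$, we have $\lfloor n/d \rfloor \leq \lfloor n/2 \rfloor$, whence
\[
n - s_{i^*} \;\geq\; n - \lfloor n/2 \rfloor \;=\; \lceil n/2 \rceil \;\geq\; \lfloor n/d \rfloor.
\]
Removing the edge $\{v, c_{i^*}\}$ then separates $T$ into two subtrees of sizes $s_{i^*}$ and $n - s_{i^*}$, both at least $\lfloor n/d \rfloor$, as required.

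I do not anticipate a substantial obstacle: once the centroid has been produced, the argument is a clean pigeonhole, and the only delicate point is verifying that $d \geq 2$ suffices for the floor/ceiling arithmetic at the last step, which is straightforward.
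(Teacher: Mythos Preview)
Your proof is correct and takes a genuinely different route from the paper's. The paper argues extremally on \emph{edges}: it picks an edge $\{x,y\}$ maximizing the smaller of the two subtree sizes, assumes for contradiction that this minimum is below $\lfloor n/d \rfloor$, and then uses the degree bound at $x$ to show that all the other branches at $x$ are also small, forcing $|T_y|$ to be too large. You instead argue via a \emph{vertex}: you locate a centroid $v$, then apply pigeonhole among the at most $d$ subtrees hanging off $v$ to find one of size at least $\lceil (n-1)/d \rceil \ge \lfloor n/d \rfloor$, while the centroid property caps it at $\lfloor n/2 \rfloor$ so the complementary side is also large enough. The two arguments are cousins---the vertex $x$ in the paper's proof is effectively a centroid---but yours is more modular (it invokes the standard centroid lemma and a clean pigeonhole), whereas the paper's is fully self-contained with no auxiliary notion. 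Both are short and deliver the same bound; neither has an advantage in strength.
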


\begin{proof}
Associate to each edge the value of the smallest cardinality of the two subtrees resulting from the edge's removal. Let $\{x,y\} $ be an edge having maximal value. We suppose that the subgraph $T_y$ containing vertex $y$ is the smaller and that the value of its subtree is less than $\lfloor n/d \rfloor - 1$.  Let the remaining edges of vertex $x$ be $\{x, x_1\}, \ \{x, x_2\}, \cdots \{x, x_r\} $, where $r \leq d-1$.  Let $T_j$ be the subtree containing $x_j$ obtained by removing the edge $\{x, x_j\}$, and let $n_j$ be its cardinality.  By maximality, all the $n_j$ must be less than $\lfloor n/d \rfloor - 1$, but equally,
$$
|T_y| = \Ll| T \setminus \Ll(\{x\} \cup T_1 \cup \cdots \cup T_r\Rr) \Rr| =
n-(1 + n_1 + n_2 + \cdots +n_r) \leq \lfloor n/d \rfloor - 1.
$$
That is, $n \le (d-1) (\lfloor n/d \rfloor - 1) + \lfloor n/d \rfloor \le n -(d-1)$, a contradiction (the case $d = 1$ being trivial).
\end{proof}

\begin{proposition}
\label{p:expalpha}
For any $\lambda > \lambda_c(\Z)$, there exists $\alpha > 0$ and $\ov{c}_2 > 0$ such that the following holds.
\begin{enumerate}
\item For any $n$ large enough, any $T \in \Lambda(n,d)$, any non-empty $A \subset T$, one has
$$
P\Ll[ \uptau^A \ge e^{\ov{c}_2 n^\alpha} \Rr] \ge \ov{c}_2.
$$
In particular, $E[\uptau^A] \ge \ov{c}_2 e^{\ov{c}_2 n^\alpha}$.
\item Moreover,
$$
P\Ll[ \uptau \ge e^{\ov{c}_2 n^{\alpha/2}} \Rr] \ge 1-e^{-\ov{c}_2 n^{-{\alpha/2}}},
$$
where we recall that we write $\uptau$ as a shorthand for $\uptau^{\un{1}}$.
\item
For $n$ large enough and any $G \in \cG(n,d)$, if the contact process on $G$ started with an arbitrary non-empty configuration survives up to time $n^2$, then the chance that at this time, it is equal to the contact process starting from full occupancy, is at least $1- e^{-n^{-\alpha/2}}$.
\end{enumerate}
\end{proposition}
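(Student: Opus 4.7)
My plan for Part~(1) is a dichotomy on $T \in \Lambda(n,d)$: either $T$ contains a simple path of length at least $L := \lfloor n^\alpha \rfloor$ for some small $\alpha > 0$, or it does not. In the first case, starting from any non-empty~$A$ the infection propagates with positive probability along~$T$ to an endpoint of such a long path~$P$; once there, Proposition~\ref{cpinterval}(1) produces a roughly full configuration on~$P$, and Proposition~\ref{cpinterval}(3) yields $\uptau_P \ge e^{\ov{c}_1 L}$ with high probability, which passes to~$\uptau^A$ by attractiveness. In the opposite case, the bounded degree of~$T$ forces iterated peeling of longest paths to produce at least~$cn/L$ disjoint segments of length $\ell \asymp \log n$ arranged in a tree pattern, and one then adapts the Chatterjee--Durrett argument of~\cite{CD}: each segment individually survives for time $e^{c\ell}$ with positive probability, during which the connecting edges of~$T$ give it many chances to reinfect its (possibly dormant) neighbours, and a recursion on the number of segments yields total survival time $e^{\ov{c}_2 n^\alpha}$ with probability bounded below. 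The two cases are balanced by the choice of~$\alpha$.

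For Part~(2), iterate Lemma~\ref{lem1} to partition~$T$ into $k := \lfloor n^{\alpha/2} \rfloor$ disjoint subtrees $T_1, \ldots, T_k$, each of size at least $n \cdot d^{-\log_2 k}$. Choosing $\alpha < 1/\log_2 d$ (shrinking the~$\alpha$ of Part~(1) if needed) guarantees $|T_i|^\alpha \ge n^{\alpha/2}$ for every~$i$. The processes $\xi^{(i)}$ built from the Harris marks lying strictly inside~$T_i$ are mutually independent, and each is dominated by~$\xi^{\un 1}$ restricted to~$T_i$. Applying Part~(1) to each~$T_i$ with full initial occupancy yields $P[\uptau^{(i)} \ge e^{\ov{c}_2 n^{\alpha/2}}] \ge \ov{c}_2$, and by independence the probability that all~$k$ restricted processes die before that time is at most $(1 - \ov{c}_2)^k \le e^{-c n^{\alpha/2}}$, which gives~(2).

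For Part~(3), the goal is $\xi^A_{n^2} = \xi^{\un 1}_{n^2}$ with the stated high probability given $\xi^A_{n^2} \ne \emptyset$. The containment $\xi^A_{n^2} \subseteq \xi^{\un 1}_{n^2}$ is automatic by attractiveness, so via the duality equation evaluated at the midpoint $s = n^2/2$, it suffices to show that for every~$y$ with $\hat\xi^{y,n^2}_{n^2} \ne \emptyset$, the sets $\xi^A_{n^2/2}$ and $\hat\xi^{y,n^2}_{n^2/2}$ intersect. These two configurations are independent, the first being measurable with respect to the Harris marks in $[0, n^2/2]$ and the second with respect to those in $[n^2/2, n^2]$. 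My plan is to reuse the partition~$T_1, \ldots, T_k$ from Part~(2) and show that each of these two surviving configurations hits every subtree~$T_i$ with sufficiently high probability, which then forces intersection within any single~$T_i$; a union bound over the~$n$ candidate vertices~$y$ closes the argument.

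The main obstacle is this invasion estimate, a genuine spreading statement beyond mere survival. The key resource is that $n^2/2$ dwarfs the diameter of~$G$, so on this time scale there are $\sim n$ disjoint windows during which a surviving process has a chance, bounded below by Proposition~\ref{cpinterval}(1) applied along a path in a spanning tree of~$G$, to reach any prescribed subtree from any currently occupied vertex. A renewal-type argument exploiting the independence of disjoint Harris time windows, combined with Part~(1) applied inside each~$T_i$ to fill it up once first entered, should give each subtree invasion probability at least $1 - e^{-c n^{\alpha/2}}/n^2$, which is what the two successive union bounds (over~$y$ and over subtrees) require.
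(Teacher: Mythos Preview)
Your overall strategy for Parts~(1) and~(2) is sound and close to the paper's, with one genuine simplification. Part~(3) has a real gap.

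\textbf{Part (1).} Your dichotomy (long path versus none) is exactly the paper's case~(A) (diameter $\ge n^\alpha$) versus case~(B). In case~(B), however, your ``iterated peeling of longest paths'' producing $cn/L$ disjoint $\log n$-segments is neither the paper's construction nor obviously justified. The paper instead iterates Lemma~\ref{lem1} $\beta\log n$ times to obtain $L_n=n^{\beta\log 2}$ disjoint subtrees each of size $\ge\sqrt n$, each of which contains a $(\log n)/(4\log d)$-segment by the degree bound. It then tracks the number $X_k$ of ``good'' segments at times $kKn^\alpha$ and compares $X_k$ to a biased random walk via Lemma~\ref{l:rw}. Your phrase ``adapt the Chatterjee--Durrett argument, recursion on the number of segments'' points to the same mechanism, but the precise definition of a good segment and the drift comparison are where the real work lies.

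\textbf{Part (2).} Here your argument is genuinely different and cleaner. The paper re-enters the random-walk comparison of Part~(1), now starting from $X_0=L_n$, and bounds the chance of any excursion below $3L_n/4$. You instead iterate Lemma~\ref{lem1} to get $k\asymp n^{\alpha/2}$ disjoint subtrees of size $\ge n^{1/2}$, apply Part~(1) to each as a black box, and use that the restricted processes are independent. This is correct (once $\alpha<1/\log_2 d$) and avoids the drift analysis entirely. The trade-off is that your argument yields no \emph{spatial} information about the surviving process---it only says some subtree is still alive---whereas the paper's good-interval machinery shows most segments are simultaneously good. That spatial information is exactly what Part~(3) needs.

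\textbf{Part (3): the gap.} Your claim that both $\xi^A_{n^2/2}$ and $\hat\xi^{y,n^2}_{n^2/2}$ hitting the same subtree $T_i$ ``forces intersection within~$T_i$'' is not justified: two non-empty subsets of~$T_i$ need not meet. What is actually needed is that on some reference set the forward process has \emph{coupled with full occupancy} while the dual is still alive there; this is the content of Remark~\ref{remduality}. The paper shows that, conditional on survival, at least $3/4$ of the intervals $I_i$ are good for the forward process at time $n^2/2$, and $3/4$ are good for the dual at time $n^2/2-Kn^\alpha$; on the intersection (at least $L_n/2$ intervals) the coupling-plus-survival event $\td\cE_i$ then fails independently with probability $\le 2n^{-2\beta}$, so all of them fail with probability $\le(2n^{-2\beta})^{L_n/2}$. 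Your proposed patch---``Part~(1) applied inside each~$T_i$ to fill it up''---is circular: Part~(1) gives survival with probability $\ge\ov c_2$, not coupling with~$\xi^{\un 1}$, and the latter is precisely what Part~(3) asserts. To close your argument you would have to develop the good-interval/coupling machinery inside the $T_i$, at which point you have reproduced the paper's proof with an extra layer of partitioning.
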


From now on, $d$ is fixed and we consider a tree $T$ of maximal degree $d$ and size $n \to \infty$. Let $\beta > 0$ to be determined, not depending on $n$. Applying Lemma~\ref{lem1} repeatedly $\beta \log n$ times, we obtain $L_n = 2^{\beta \log n}$ disjoint subtrees each of size at least $\frac{n}{{(2d)}^{\beta \log n}} \geq \sqrt{n}$, provided $\beta \le 1/(2\log(2d))$ (for clarity, we simply assume that $L_n$ is an integer, without writing that the integer part should be taken). We write $T_1,\ldots, T_{L_n}$ for the trees thus obtained.

Since the tree $T$ has maximal degree bounded by $d$, so do the subtrees $(T_j)$. Now, the size of a tree with maximal degree $d$ is at most
$$
1+d+\ldots+d^\textsf{diam} = \frac{d^{\textsf{diam}+1}-1}{d-1},
$$
where $\textsf{diam}$ denotes its diameter. As a consequence, for $n$ large enough, each $T_j$ must have a diameter at least $\frac{\log n}{4 \log d}$, and thus contain a path of $\frac{\log n}{4 \log d}$ distinct vertices. We write $I_j$ to denote such a path, which we identify with an interval of length $\frac{\log n}{4 \log d}$.

\medskip

In what follows, we will distinguish between the two possibilities:
\begin{enumerate}
\item[(A)]
the diameter of $T$ is at least $n^{\alpha}$,
\item[(B)]
the diameter of $T$ is less than $n^\alpha$,
\end{enumerate}
where $\alpha > 0$ is a fixed number whose value will be specified in the course of the proof. It is worth keeping in mind that $\alpha$ will be chosen much smaller than $\beta$, itself chosen as small as necessary.

\begin{proof}[Proof of parts (1-2) of Proposition~\ref{p:expalpha}] Assume that the tree $T$ satisfies (A). For part (1), by attractiveness, it suffices to consider initial configurations with a single occupied site $z$.  Condition (A) ensures that one can find an interval of length at least $n^\alpha$. We write $[x,y]$ to denote such an interval, with $x$ and $y$ its endpoints.
Consider the event that within time $2n/\ov{c}_1$, the contact process has infected site $x$, and thereafter the contact process begun at this time restricted to $[x,y]$ and with only $x$ occupied has infected $y$. This event has probability at least $\ov{c}_1^2$ by part (1) of Proposition~\ref{cpinterval}. If this event occurs, then at time $2n/\ov{c}_1$, the contact process on $T$ dominates the contact process on $[x,y]$ begun with full occupancy. The desired bound now follows from bounds on survival times for supercritical contact processes on an interval, see part (3) of Proposition~\ref{cpinterval}. Part (2) also follows using the interval $[x,y]$ and part (3) of Proposition~\ref{cpinterval}.
%

We now consider that the graph satisfies (B), and adapt an approach due to \cite{CD}. For any $A \subset I_i$, we write $(\xi^{A}_{i,t})_{t \ge 0}$ for the contact process on $I_i$ with initial configuration $A$, and define
$$
p_i(A) = P\Ll[ \xi^{A}_{i,Kn^\alpha} = \xi^{\un{1}}_{i,Kn^\alpha} \neq \un{0} \Rr],
$$
where $K = 2/\ov{c}_1$.
For any $i \le L_n$, we say that the interval $I_i$ is \emph{good at time} $t$ if $p_i(\xi_t) \ge 1 -n^{-2\beta}$,
where for simplicity we write $p_i(\xi_t)$ instead of $p_i\Ll({\xi_t} \cap I_i\Rr)$.
%
%

For $k \in \N$, we let $X_k \in \{0,\ldots L_n\}$ be the number of good intervals at time $kKn^{\alpha}$. For $i \le L_n$ and $k \ge 0$, let us write $\cE_{i,k}$ for the event that the interval $I_i$ is good at time $kK n^\alpha$. By definition,
$$
P[\cE_{i,k+1} \ | \ \cE_{i,k}] = P[p_i(\xi_{(k+1)Kn^\alpha}) \ge 1 - n^{-2\beta} \ | \ \cE_{i,k}].
$$
By attractiveness, the latter is larger than
\begin{equation*}
\begin{split}
& P\Ll[p_i\Ll( \xi^{\xi_{kKn^\alpha}}_{i,Kn^\alpha} \Rr)\ge 1 - n^{-2\beta} \ | \ \cE_{i,k}\Rr] \\
& \qquad \ge P\Ll[p_i\Ll( \xi^{\un{1}}_{i,Kn^\alpha} \Rr) \ge 1 - n^{-2\beta}, \xi^{\xi_{kKn^\alpha}}_{i,Kn^\alpha} = \xi^{\un{1}}_{i,Kn^\alpha} \ | \ \cE_{i,k}\Rr] \\
& \qquad \ge 1 - P\Ll[p_i\Ll( \xi^{\un{1}}_{i,Kn^\alpha} \Rr) < 1 - n^{-2\beta}\Rr]
- \underbrace{P\Ll[\xi^{\xi_{kKn^\alpha}}_{i,Kn^\alpha} \neq \xi^{\un{1}}_{i,Kn^\alpha} \ | \ \cE_{i,k}\Rr]}_{\le n^{-2\beta}}.
\end{split}
\end{equation*}
We now argue that for $n$ large enough,
\begin{equation}
\label{e:expalpha}
P\Ll[p_i\Ll( \xi^{\un{1}}_{i,Kn^\alpha} \Rr) < 1 - n^{-2\beta}\Rr] \le n^{-2\beta}.
\end{equation}
Letting $(\xi^{A,t}_s)_{s \ge t}$ be the contact process started at time $t$ with $A$ occupied, one can rewrite the probability on the l.h.s.\ of \eqref{e:expalpha} as
\begin{multline*}
P\Ll[ P\Ll[\xi^{\xi^{\un{1}}_{i,Kn^\alpha},Kn^\alpha}_{i,2Kn^\alpha} \neq \xi^{\un{1},Kn^\alpha}_{i,2Kn^\alpha} \text{ or } \xi^{\un{1},Kn^\alpha}_{i,2Kn^\alpha} = \un{0} \ | \ \xi^{\un{1}}_{i,Kn^\alpha} \Rr] > n^{-2\beta} \Rr] \\
\le n^{2\beta} P\Ll[ \xi^{\un{1}}_{i,2Kn^\alpha} \neq \xi^{\un{1},Kn^\alpha}_{i,2Kn^\alpha} \text{ or }  \xi^{\un{1}}_{i,2Kn^\alpha} = \un{0}\Rr].
\end{multline*}
By part (3) of Proposition~\ref{cpinterval}, the contact process on $I_i$ started with full occupancy survives up to time $2Kn^\alpha$ with probability larger than
$$
1-2Kn^\alpha \exp\Ll(-\ov{c}_1 |I_i|\Rr) = 1-2Kn^{\alpha-\ov{c}_1/4\log d }.
$$
On this event, the probability that it gets coupled with the contact process started from full occupancy at time $Kn^\alpha$ within time $Kn^\alpha$ is larger than $1-e^{-|I_i|} = 1-n^{-\ov{c}_1/4\log d}$ by part (2) of Proposition~\ref{cpinterval}. Hence, the l.h.s.\ of \eqref{e:expalpha} is bounded by
$$
n^{2\beta} \Ll( 2Kn^{\alpha-\ov{c}_1/4\log d } + n^{-\ov{c}_1/4\log d} \Rr),
$$
which can be made smaller than $n^{-2\beta}$ if $0 < \alpha \ll \beta \ll 1$ are suitably chosen. To sum up, we have shown that for all $n$ large enough,
$$
P[\cE_{i,k+1} \ | \ \cE_{i,k}] \ge 1-2 n^{-2\beta}.
$$
Moreover, an examination of the above proof shows that this estimate still holds if we condition also on the state of the intervals $(I_j)_{j \neq i}$. In other words, we have shown that for any $x \ge 0$,
\begin{equation}
\label{driftgauche}
P\Ll[X_{k+1} \leq X_{k} -x \ | \ X_{k}\Rr] \leq P\Ll[\mathsf{Bin}(L_n, 2 n^{-2\beta}) \geq x\Rr],
\end{equation}
where $\mathsf{Bin}(n,p)$ denotes a binomial random variable of parameters $n$ and $p$. Note also that with probability tending to $1$, all the intervals that are good at time $kKn^\alpha$ remain so at time $(k+1)Kn^\alpha$.

We now show that if $l < L_n$, then
\begin{equation}
\label{driftdroite}
P\Ll[X_{k+1}-X_k \ge 1 \ | \ \xi_{kKn^\alpha} \neq \un{0}, X_k = l \Rr] \ge \frac{\ov{c}_1^2}{2}.
\end{equation}
(Obviously, if $X_k = l \neq 0$, then it must be that $\xi_{kKn^\alpha} \neq \un{0}$.) By the Markov property, it suffices to show \eqref{driftdroite} for $k = 0$. We thus consider a non-empty initial configuration $A$ with $l < L_n$ good intervals. Let $I_i = [x,y]$ be an interval that is not good at time $0$. With probability tending to $1$, all good intervals remain good at time $Kn^\alpha$, so we only need to study the probability that $I_i$ becomes good. The probability of the complementary event is
$$
P\Ll[ p_i\Ll(\xi^A_{Kn^\alpha}\Rr) < 1-n^{-2\beta} \Rr] \le P\Ll[\xi^A_{Kn^\alpha} < \xi_{i,Kn^\alpha}^{\un{1}} \Rr] + P\Ll[ p_i\Ll(\xi_{i,Kn^\alpha}^{\un{1}}\Rr) < 1-n^{-2\beta} \Rr] .
$$
Inequality \eqref{e:expalpha} ensures that the last probability becomes arbitrarily small for $n$ large enough. It thus suffices to show that
\begin{equation}
\label{e:expal}
P\Ll[\xi^A_{Kn^\alpha} < \xi_{i,Kn^\alpha}^{\un{1}} \Rr] \le 1-\ov{c}_1^2.
\end{equation}
Let $z \in A$. We consider the event $\cE_1$ that within time $Kn^\alpha = 2n^\alpha/\ov{c}_1$, the contact process has infected $x$, and thereafter the contact process restricted to $[x,y]$ and with only $x$ occupied has reached $y$. Note that the diameter of $T$ is less than $n^\alpha$ (so that there exists a path of length less than $n^\alpha$ linking $z$ to $x$), while the length of $I_i$ is $\frac{\log n}{4 \log d} \le n^\alpha$. As a consequence, part~(1) of Proposition~\ref{cpinterval} ensures that the event $\cE_1$ has probability at least $\ov{c}_1^2$. Since on the event $\cE_1$, we have $\xi^A_{Kn^\alpha} \ge \xi_{i,Kn^\alpha}^{\un{1}}$, this justifies \eqref{e:expal}, and thus also \eqref{driftdroite}.

The conclusion will now follow from \eqref{driftgauche} and \eqref{driftdroite} by a comparison with a random walk on $\Z \cap (-\infty,L_n]$ with a drift to the right. The necessary information on this drifted walk is contained in the following lemma.
\begin{lemma}
\label{l:rw}
Let $(Z_l)_{l \in \N}$ be the random walk on $\Z \cap (-\infty,L_n]$ with transition probabilities
$$
P[Z_{l+1} = x + k \ | \ Z_l = x < L_n] =
\left|\begin{array}{ll}
0 & \text{if } k > 1, \\
\ov{c}_1^2/{2} & \text{if } k = 1, \\
e^{-n^{-\beta}} \ n^{-|k|\beta}/{|k|!} & \text{if } k \le -1.
\end{array}
\right.
$$
Let also $H_0$ be the hitting time of $\Z_- = \Z \cap(-\infty,0]$, and $H_L$ be the hitting time of $L_n$. For any $n$ large enough and any $x \le L_n$, we have
$$
P\Ll[ H_0 < H_L \ | \ Z_0 = x \Rr] \le n^{-x \beta/2}.
$$
\end{lemma}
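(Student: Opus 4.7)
The plan is to exhibit a positive supermartingale $f$ for the walk stopped at $\tau := H_0 \wedge H_L$, such that $f \ge 1$ on $\Z_-$ and $f(x) = n^{-x\beta/2}$ at the starting point. The natural candidate is
$$
f(x) := n^{-x\beta/2},
$$
and once the supermartingale property and $\tau < \infty$ a.s.\ are established, Fatou's lemma applied to the stopped chain $f(Z_{l \wedge \tau})$ gives
$$
P[H_0 < H_L \mid Z_0 = x] \le E\Ll[f(Z_\tau)\mathds{1}_{\{H_0 < H_L\}}\Rr] \le E[f(Z_\tau)] \le f(x) = n^{-x\beta/2},
$$
which is the desired bound.

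The key step is the one-step inequality $E[f(Z_{l+1}) \mid Z_l = x] \le f(x)$ for $x < L_n$. Summing the contributions from the up-step (probability $\ov{c}_1^2/2$), the down-steps, and the holding state (whose probability works out to $e^{-n^{-\beta}} - \ov{c}_1^2/2$), and using the identity $\sum_{k \geq 1} n^{-k\beta/2}/k! = e^{n^{-\beta/2}} - 1$, a direct computation yields
$$
\frac{E[f(Z_{l+1}) \mid Z_l = x]}{f(x)} = \frac{\ov{c}_1^2}{2}\Ll(n^{-\beta/2} - 1\Rr) + \exp\Ll(n^{-\beta/2} - n^{-\beta}\Rr).
$$
Setting $\epsilon = n^{-\beta/2}$ and Taylor-expanding, the right-hand side equals $1 - \ov{c}_1^2/2 + \epsilon(1 + \ov{c}_1^2/2) + O(\epsilon^2)$, which is strictly less than $1$ for $n$ sufficiently large. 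A noteworthy point is that the exponent $\beta/2$ in $f$ (and not $\beta$) is precisely what keeps the down-jump series convergent: each down-step of size $k$ contributes $n^{-k\beta}/k! \cdot n^{k\beta/2} = n^{-k\beta/2}/k!$. This is why the lemma achieves exponent $\beta/2$ rather than $\beta$.

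For optional stopping / Fatou to apply, one still needs $\tau < \infty$ almost surely. Since the chain restricted to $\{1,\ldots,L_n - 1\}$ is finite and every such state can reach $L_n$ (through at most $L_n$ successive up-jumps, each of probability $\ov{c}_1^2/2 > 0$) as well as $\Z_-$ (via a single sufficiently large down-jump of positive probability), no closed communicating class lies inside the band and absorption occurs in finite time almost surely. I do not anticipate any serious obstacle beyond the one-step supermartingale inequality; the downward tail $1/k!$ is sub-exponential in $k$, which is what makes the clean exponential identity above work.
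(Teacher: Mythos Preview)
Your proof is correct and follows essentially the same approach as the paper: the paper also uses the test function $\td{h}(x) = n^{-x\beta/2}$, verifies that $\cL \td{h} \le 0$ on $\Z \cap (0,L_n)$ (which is exactly your one-step supermartingale inequality), and then concludes via the maximum principle applied to $h - \td{h}$, where $h(x) = P[H_0 < H_L \mid Z_0 = x]$. Your optional-stopping/Fatou argument is the probabilistic counterpart of that maximum-principle step, so the two arguments are essentially the same.
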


Let us postpone the proof of this lemma, and see how it enables us to conclude. From \eqref{driftdroite}, we learn that whatever the initial non-empty configuration, we have $X_1 \ge 1$ with probability bounded away from $0$. On this event, we want to couple $(X_k)$ with the random walk of the lemma, so that $X_{k-1} \ge Z_k$ for every $k \ge 0$. In the r.h.s.\ of \eqref{driftgauche}, a binomial random variable appears, while jumps to the left in the lemma follow a Poisson random variable. Since a Bernoulli random variable of parameter $p$ is stochastically dominated by a Poisson random variable of parameter $-\log(1-p)$, it follows that $\mathsf{Bin}(L_n, 2 n^{-2\beta})$ is stochastically dominated by a Poisson random variable of parameter
$$
-L_n \log (1-2 n^{-2\beta})  = -n^{\beta \log 2}\log (1-2 n^{-2\beta}) \le n^{-\beta}.
$$
This and \eqref{driftdroite} guarantee the existence of the coupling. With probability at least $1-n^{-\beta/2} \ge 1/2$, the random walk hits $L_n$ before entering $\Z_-$. The proof of part (1) will be complete if we can argue that starting from $L_n$, with probability close to $1$, the walk needs to exit $L_n$ at least $e^{n^{\alpha}}$ times before reaching $\Z_-$. Let us consider a sequence of $e^{n^\alpha}$ excursions from $L_n$, and show that with high probability, none of them visits $\Z_-$. The first jump out of $L_n$ is distributed according to a Poisson random variable of parameter $n^{-\beta}$, which (for convenience) may be dominated by an exponential random variable of parameter $1$. With probability tending to $1$, the maximum over $e^{n^{\alpha}}$ such random variables does not exceed $n^{2 \alpha} \le L_n/4$. In view of the lemma, given an excursion whose first step has size smaller than $L_n/4$, the excursion will visit $\Z_-$ with probability smaller than $n^{-3L_n\beta/4} \le e^{-2n^\alpha}$, and this finishes the proof of part (1).

As for part (2), the argument is similar, except that in this case $X_0 = L_n$. Consider $e^{n^{\alpha/2}}$ excursions from $L_n$. With probability at least $1-e^{-n^{\alpha/2}}$, none of these excursions has size larger than $n^{2\alpha} \le L_n/4$. As noted above, given an excursion from $L_n$ whose first step has size smaller than $L_n/4$, the excursion will visit $\Z_-$ with probability smaller than $n^{-3L_n\beta/4} \le e^{-2n^\alpha}$, thus finishing the proof of part (2).
\end{proof}

\begin{proof}[Proof of Lemma~\ref{l:rw}]
Let $h(x) = P\Ll[ H_0 < H_L \ | \ Z_0 = x \Rr]$, $\td{h}(x) = n^{-x \beta/2}$, and let $\cL$ be the generator of the random walk:
$$
\cL f(x) = \frac{\ov{c}_1^2}{2}(f(x+1)-f(x)) + e^{-n^{-\beta}} \sum_{k = 1}^{+\infty}  \frac{n^{-k\beta}}{k!} (f(x-k) - f(x))\qquad (x < L_n).
$$
For $x \in \Z \cap (0,L_n)$, we have $\cL h(x) = 0$. On the other hand, for such $x$, we have
\begin{eqnarray*}
\cL \td{h}(x) & = & \frac{\ov{c}_1^2}{2}\Ll(n^{-\beta/2}-1\Rr)\td{h}(x) + e^{-n^{-\beta}}\sum_{k = 1}^{+\infty}  \frac{n^{-k\beta}}{k!} (n^{k\beta/2}-1)\td{h}(x) \\
& \le & \frac{\ov{c}_1^2}{2}\Ll(n^{-\beta/2}-1\Rr)\td{h}(x) + \sum_{k = 1}^{+\infty}  \frac{n^{-k\beta}}{k!} n^{k\beta/2}\td{h}(x) \\
& \le & \Ll[\frac{\ov{c}_1^2}{2}\Ll(n^{-\beta/2}-1\Rr) + e^{n^{-\beta/2}}-1  \Rr] \td{h}(x),
\end{eqnarray*}
so $\cL \td{h}(x)\le 0$ as soon as $n$ is large enough. As a consequence, $\cL (h-\td{h}) \ge 0$ on $\Z \cap (0,L_n)$. By the maximum principle,
$$
\max_{\Z \cap (0,L_n)} (h-\td{h}) \le \max_{\Z_- \cup \{L_n\}} (h-\td{h}) = 0,
$$
and the lemma is proved.
\end{proof}
The following observation will be useful in the proof of part (3) of Proposition~\ref{p:expalpha}.
\begin{rem}
\label{remduality}	
Let a Harris system for the contact process on some graph $G = (V,E)$ be given (and fixed). We identify $G$ with its set of vertices, and assume that $\xi^A_t = \xi^{\un{1}}_t$ for some $A \subset V$ and $t > 0$. This implies that in the Harris system, any infection path from $V\times \{0\}$ to $V\times \{t\}$ intersects the offspring of elements of $A$. Let $(\hat{\xi}^{B,t}_s)_{0 \le s \le t}$ be the dual contact process for time $t$, started with configuration $B$. If furthermore, $\hat{\xi}^{B,t}$ survives up to time $t$, then there must exist an infection path from $A \times \{0\}$ to $B \times \{t\} $.
\end{rem}

\begin{proof}[Proof of part (3) of Proposition~\ref{p:expalpha}]
We continue with case (B), but considering that $T$ is the spanning tree of some graph $G = (V,E)$. For an arbitrary $z \in V$, we wish to bound
$$
P\Ll[ \xi^z_{n^2} \neq \xi^{\un{1}}_{n^2}, \ \xi^z_{n^2} \neq \un{0} \Rr].
$$
The probability above is equal to $P[\exists y : \xi^z_{n^2}(y) \neq \xi^{\un{1}}_{n^2}(y), \ \xi^z_{n^2} \neq \un{0}]$. For any fixed $y$, we will thus bound
\begin{equation}
\label{e:debut}
P\Ll[\xi^z_{n^2}(y) \neq \xi^{\un{1}}_{n^2}(y), \ \xi^z_{n^2} \neq \un{0}\Rr].
\end{equation}
Letting $(\hat{\xi}^{y,n^2}_t)_{0 \le t \le n^2}$ be the dual contact process for time $n^2$ started with configuration $\{y\}$, we can rewrite this probability as
$$
P\Ll[\xi^z_{n^2}(y) = 0,\  \hat{\xi}^{y,n^2}_{n^2} \neq \un{0}, \ \xi^z_{n^2} \neq \un{0}\Rr].
$$
As in the proof of part (1), we consider $X_k$ the number of good intervals at time $kKn^\alpha$. By attractiveness, if an interval is good for the contact process on $T$, then it must be good for the contact process on $G$. Note that, for $H_L$ as in Lemma~\ref{l:rw}, a classical large deviation estimate on sums of i.i.d.\ random variables with an exponential moment gives us that
$$
P\Ll[ H_L > n \Rr] \le e^{-\sqrt{n}},
$$
and as a consequence,
\begin{equation}
\label{step1}
P\Ll[L_n \notin \{X_k, \ k \le n\}, \ \xi^z_{nKn^\alpha} \neq \un{0} \Rr] \le e^{-\sqrt{n}}.
\end{equation}
Let $\cE_{3/4}$ be the event that starting from $z$ occupied, at least $3/4$ of all the intervals~$(I_i)_{i \le L_n}$ are good at time $n^2/2$ (which, for simplicity, is assumed to be a multiple of $Kn^\alpha$). As the proof of part (2) reveals, once $X_k$ has reached $L_n$, the probability that it makes an excursion below $3L_n/4$ before time $n^2$ is smaller than $e^{-n^{\alpha}}$. Combining this with \eqref{step1}, we obtain
$$
P\Ll[ \xi^z_{n^2} \neq \un{0},\ \cE_{3/4}^c \Rr] \le 2 e^{-n^{\alpha}},
$$
where $\cE_{3/4}^c$ denotes the complement of $\cE_{3/4}$. Similarly, if we let $\hat{\cE}_{3/4}$ denote the event that for the dual process $\hat{\xi}^{y,n^2}$, at least $3/4$ of the intervals are good at time $n^2/2 - Kn^\alpha$, then
$$
P\Ll[ \hat{\xi}^{y,n^2}_{n^2} \neq \un{0},\ \hat{\cE}_{3/4}^c \Rr] \le 2 e^{-n^{\alpha}}.
$$
Consider the event $\td{\cE}_i$ defined by:
\begin{equation*}
\begin{array}{l}
\text{during the time interval } [n^2/2,n^2/2+Kn^\alpha]\text{, the direct contact process}\\
\text{restricted to } I_i \text{ becomes identical with the contact process started with full} \\
\text{occupancy (on } I_i \text{), while the dual contact process restricted to } I_i \text{ survives}.
\end{array}
\end{equation*}
Let also $\cI$ be the set of indices $i$ such that $I_i$ is good both for the contact process and its dual. We have
\begin{equation*}
P\Ll[\bigcap_{i \le L_n} (\td{\cE}_i)^c, \cE_{3/4},\hat{\cE}_{3/4} \Rr] \le
P\Ll[\bigcap_{i \in \cI} (\td{\cE}_i)^c, \cE_{3/4},\hat{\cE}_{3/4} \Rr].
\end{equation*}
Given that ${\cE}_{3/4}$ and $\hat{\cE}_{3/4}$ both happen, at least $1/2$ of the intervals are good both for the contact process and its dual, or in other words, $|\cI| \ge L_n/2$. Moreover, the events $\cE_{3/4}$ and $\hat{\cE}_{3/4}$, and the set $\cI$, are independent of the state of the Harris system in the time layer $T \times [n^2/2,n^2/2+Kn^\alpha]$. By the definition of being good, we have $P[(\td{\cE}_i)^c \ | \ i \in \cI] \le 2 n^{-2\beta}$.
Note also that the events $(\td{\cE}_i)$ are independent. Hence
\begin{equation*}
P\Ll[\bigcap_{i \le L_n} (\td{\cE}_i)^c, \cE_{3/4},\hat{\cE}_{3/4} \Rr] \le
(2n^{-2\beta})^{L_n/2}.
\end{equation*}
Finally, note that when one of the $\td{\cE}_i$ happens, it must be that $\xi^z_{n^2}(y) = 1$, by Remark~\ref{remduality}. We have thus proved that
\begin{eqnarray*}
P\Ll[\xi^z_{n^2}(y) = 0,\  \hat{\xi}^{y,n^2}_{n^2} \neq \un{0}, \ \xi^z_{n^2} \neq \un{0}\Rr] & \le & P\Ll[\xi^z_{n^2}(y) = 0,\ \cE_{3/4}, \hat{\cE}_{3/4}  \Rr] + 4 e^{-n^{\alpha}}\\
& \le & \Ll( 2n^{-2\beta} \Rr)^{L_n/2} + 4 e^{-n^{\alpha}} \\
& \le & 5 e^{-n^{\alpha}}.
\end{eqnarray*}
Recalling that the probability on the l.h.s.\ above is that appearing in \eqref{e:debut}, we have thus shown that
$$
P\Ll[ \xi^z_{n^2} \neq \xi^{\un{1}}_{n^2}, \ \xi^z_{n^2} \neq \un{0} \Rr] \le 5n e^{-n^{\alpha}}.
$$
Now for a general $A \subset V$, we have
$$
P\Ll[ \xi^A_{n^2} \neq \xi^{\un{1}}_{n^2}, \ \xi^A_{n^2} \neq \un{0} \Rr] \le \sum_{z \in T } P\Ll[ \xi^z_{n^2} \neq \xi^{\un{1}}_{n^2}, \ \xi^z_{n^2} \neq \un{0} \Rr] \le 5n^2 e^{-n^{\alpha}}.
$$
In view of part (1) of Proposition~\ref{p:expalpha}, we thus have, for $A \neq \emptyset$,
$$
P\Ll[ \xi^A_{n^2} \neq \xi^{\un{1}}_{n^2} \ | \  \xi^A_{n^2} \neq \un{0} \Rr] \le \frac{5n^2}{\ov{c}_2} e^{-n^{\alpha}},
$$
which proves the desired result.

\medskip

For case (A), the reasoning is similar, only simpler. Let $I$ be an interval of length $n^\alpha$ contained in $T$. For any $A \subset I$, we write $(\xi^{A}_{I,t})_{t \ge 0}$ for the contact process on $I$ with initial configuration $A$, and define
$$
p(A) = P\Ll[ \xi^{A}_{I,K n} = \xi^{\un{1}}_{I,K n} \neq \un{0} \Rr].
$$
We say that $I$ is \emph{good at time} $t$ if $p(\xi_t) \ge 1-e^{-n^{3\alpha/4}}$, and for $k \in \N$, we let $X_k$ be the indicator function that $I$ is good at time $kK n$.

In view of the proof of part (1) of Proposition~\ref{p:expalpha}, we have
\begin{equation}
\label{driftright}
P\Ll[ X_{k+1} = 1 \ | \ \xi_{kKn} \neq \un{0} \Rr] \ge \ov{c}_1^2,
\end{equation}
while the same reasoning as in case (B) leads to
\begin{equation}
\label{driftleft}
P\Ll[X_{k+1} = 1 \ | \ X_k = 1 \Rr] \ge 1-2e^{-n^{3\alpha/4}}.
\end{equation}
From \eqref{driftright} and \eqref{driftleft}, one can see that, for any $z \in V$,
$$
P\Ll[ \xi_{n^2}^z \neq \un{0},\ I \text{ not good at time } n^{3/2} \text{ for } \xi^z \Rr] \le 2 e^{-n^{5\alpha/8}},
$$
where for simplicity we assume that $n^{3/2}$ is a multiple of $Kn$. Similarly, for any $z \in V$, one has
$$
P\Ll[ \hat{\xi}^{y,n^2}_{n^2} \neq \un{0},\  I \text{ not good at time } n^{3/2}-Kn \text{ for } \hat{\xi}^{y,n^2} \Rr] \le 2 e^{-n^{5\alpha/8}},
$$
and we conclude as in case (B).
\end{proof}

\begin{proof}[Proof of Theorem~\ref{thm2main2}]
The result follows from \cite[Proposition~2.1]{tommeta}, using parts (2-3) of Proposition~\ref{p:expalpha}.
\end{proof}

%





\vspace{0.4cm}

%
%
%
%
%
%
%
\section{Comparison with Phoenix contact processes}
\label{s:comparison2}
\setcounter{equation}{0}

The aim of this section is to prove Theorems~\ref{thm1main1} and \ref{thm3main3}. To this end, we manufacture a ``Phoenix contact process''. This process evolves as a contact process up to extinction, but has then the ability to recover activity, making it a positive recurrent Markov process. Separating a tree $T$ into $T_1$ and $T_2$ as in Lemma~\ref{lem1}, we then show that with high probability, the true contact process $\xi$ dominates the union of the two Phoenix contact processes running independently on $T_1$ and $T_2$, and this enables us to conclude.

\medskip

Let $T \in \Lambda(n,d)$. Given a Harris system for the contact process on $T$, for any $x \in T$ and $t \ge 0$, we write $(\xi^{x,t}_s)_{s \ge t}$ for the contact process starting at time $t$ with $x$ the only occupied site.
We say that the Harris system is \emph{trustworthy} on the time interval $[0,n^4]$ if for any $(x,s) \in T \times [0,n^4/2]$, the following two conditions hold:
\begin{enumerate}
\item[(C$_1$)]
if $\xi^{x,s}$ survives up to time $n^4$, then $\xi^{x,s}_{n^4} = \xi^{\un{1}}_{n^4}$,
\item[(C$_2$)]
if $\xi^{x,s}$ survives up to time $s+2 n^2$, then it survives up to time $n^4$.
\end{enumerate}

We say that the Harris system $H$ is trustworthy on the time interval $[t,t+n^4]$ if $\Theta_t H$ is trustworthy on the time interval $[0,n^4]$, where $\Theta_t H$ is the Harris system obtained by a time translation of $t$.

For a given Harris system and for $(Y_t)_{t \in \R_+}$ a family of independent auxiliary random variables following a Bernoulli distribution of parameter $1/2$, independent of the Harris system, we define the \emph{Phoenix contact process} $(\zeta_{T,t})_{t \ge 0} = (\zeta_t)_{t \ge 0}$ on $\{0,1\}^T$ as follows.

\noindent \emph{Step 0.} Set $\zeta_0 = \un{1}$, and go to Step 1.

\noindent \emph{Step 1.} Evolve as a contact process according to the Harris system, up to reaching the state $\un{0}$, and go to Step 2.

\noindent \emph{Step 2.} Let $t$ be the time when Step 2 is reached. Stay at $\un{0}$ up to time $t+n^4$ and
\begin{itemize}
\item
if the Harris system is trustworthy on $[t,t+n^4]$ and $Y_{t} = 1$, then set $\zeta_{t+n^4} = \xi^{\un{1},t}_{t+n^4}$ (where $\xi^{\un{1},t}$ is the contact process started with full occupancy at time $t$ and governed by the Harris system), and go to Step 1 ;
\item
else, go to Step 2.
\end{itemize}

We say that the process is \emph{active} when it is running Step 1 ; is \emph{quiescent} when it is running Step 2. Note that after initialization, the process alternates between active and quiescent phases. If it happens that during Step 2, the Harris system is trustworthy on $[t,t+n^4]$ and $Y_t = 1$, but $\xi^{\un{1},t}_{t+n^4} = \un{0}$, we consider that the process is active at time $t+n^4$, and becomes inactive again immediately afterwards.

\begin{rem}
\label{r:markov}
Note that since the time the process spends on state $\un{0}$ is not exponential, $(\zeta_t)$ is not Markovian. It would however be easy to make the process Markovian, by enlarging its state space into $\Ll(\{0,1\}^T \setminus \{\un{0}\}\Rr) \cup \Ll(\{\un{0}\} \times [0,n^4)\Rr)$, so that when arriving in Step 2, the process is in the state $(\un{0},0)$, and subsequently the second coordinate increases at unit speed.
\end{rem}

\begin{rem}
\label{r:randomization}
The auxiliary randomization of $\zeta$ provided by the family $(Y_t)$ is a technical convenience, which guarantees that if $\zeta_t$ is quiescent at some time $t$, then with probability at least $1/2$ it remains so at least up to time $t + n^4$.	
\end{rem}

\begin{rem}
\label{r:defnu}
Each time the process becomes active again, its distribution at this time is that of $\xi^{\un{1}}_{n^4}$ conditionned on the event that the Harris system is trustworthy on the time interval $[0,n^4]$. We write $\nu$ to denote this distribution.
\end{rem}

\begin{lemma}
\label{l:trust}
Let $T \in \Lambda(n,d)$. For any $n$ large enough and any $t$, the probability that the Harris system on $T$ is trustworthy on $[t,t+n^4]$ is larger than $1/2$.
\end{lemma}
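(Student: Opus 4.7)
The plan is to bound the probability that the Harris system fails to be trustworthy on $[0, n^4]$ by $1/2$ for $n$ sufficiently large; by translation invariance of the Harris system it suffices to take $t=0$. Set $p_n = (5n^2/\ov{c}_2)\, e^{-n^\alpha}$ and $q_n = e^{-\ov{c}_2 n^{\alpha/2}}$ for the error terms afforded by parts (3) and (2) of Proposition~\ref{p:expalpha}; both decay faster than any polynomial in $1/n$. The strategy is to prove a per-pair estimate and then union bound over a suitable discretization of $T \times [0, n^4/2]$.

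First, I would establish the per-pair bound. Fix $(x,s) \in T \times [0, n^4/2]$ and construct a good event of probability $\ge 1 - 2p_n - 2q_n$ on which both (C$_1$) and (C$_2$) hold at $(x,s)$, via a three-way coupling between $\xi^{x,s}$, $\xi^{\un{1},s}$, and $\xi^{\un{1},0}$. Applying part (3) of Proposition~\ref{p:expalpha} to $\xi^{x,s}$ (started at time $s$ from $\{x\}$) gives, with probability at least $1-p_n$, that $\xi^{x,s}_{s+n^2}$ is either $\un{0}$ or equal to $\xi^{\un{1},s}_{s+n^2}$. Applying part (3) to the random configuration $\xi^{\un{1},0}_s$, which is independent of the Harris system on $[s, s+n^2]$, gives the analogous statement comparing $\xi^{\un{1},0}_{s+n^2}$ to $\xi^{\un{1},s}_{s+n^2}$. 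Finally, applying part (2) separately to $\xi^{\un{1},0}$ and $\xi^{\un{1},s}$ ensures both survive past time $n^4$ with probability at least $1-q_n$ each. On the intersection, the Harris coupling forces $\xi^{x,s}$, $\xi^{\un{1},0}$, and $\xi^{\un{1},s}$ to agree from time $s+n^2$ onward whenever $\xi^{x,s}$ has not died out, which gives both (C$_1$) (if $\xi^{x,s}$ survives to $n^4$ then $\xi^{x,s}_{n^4}=\xi^{\un{1},s}_{n^4}=\xi^{\un{1}}_{n^4}$) and (C$_2$) (if $\xi^{x,s}$ survives to $s+2n^2\ge s+n^2$, it coincides with $\xi^{\un{1},s}$ which survives to $n^4$).

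Next, I would extend this to all $(x,s)$ by discretization. Over the integer grid $k \in \{0,\ldots,\lfloor n^4/2 \rfloor\}$ and $x \in T$, a union bound yields total error $\le n \cdot n^4 \cdot (2p_n + 2q_n) = o(1)$. For non-integer $s \in [k, k+1]$, I would exploit the following monotonicity: between any two consecutive Harris events involving $x$ (death marks at $x$ or uses of an incident edge), the map $s \mapsto \xi^{x,s}_{k+2}$ is non-increasing in the containment order, since $\{x\} \subset \xi^{x,s'}_{s}$ for $s' \le s$ in such a sub-interval gives $\xi^{x,s}_{k+2} \subset \xi^{x,s'}_{k+2}$. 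The number of sub-intervals in $[k, k+1]$ is one plus a Poisson variable of parameter $1+\lambda d$, and within each the configuration takes at most $|T|+1$ distinct values, so the random set $\mathcal{A}_{x,k} := \{\xi^{x,s}_{k+2} : s \in [k,k+1]\} \setminus \{\un{0}\}$ has cardinality $O(n \log n)$ with overwhelming probability, is measurable with respect to the Harris system on $[k,k+2]$, and hence independent of its restriction to $[k+2, k+2+n^2]$. Applying part (3) to each $A \in \mathcal{A}_{x,k}$ conditionally on $\mathcal{A}_{x,k}$ couples the corresponding $\xi^{A, k+2}$ with $\xi^{\un{1}, k+2}$ after time $n^2$, adding a factor $O(n\log n)$ to the per-pair error but keeping the overall union bound vanishing. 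Chaining with the integer-grid estimate transfers (C$_1$) and (C$_2$) to all $s \in [k,k+1]$.

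The main obstacle is the extension from integer $s$ to the continuum: the monotonicity observation is the natural tool but requires some care to track the dependencies between the restrictions of the Harris system to different time intervals, and to argue that the "worst case" configuration at time $k+2$ (as $s$ ranges over $[k,k+1]$) is captured by the random finite set $\mathcal{A}_{x,k}$. Once this is handled, the bounds from Proposition~\ref{p:expalpha}, being superpolynomially small, comfortably absorb all polynomial factors from the discretization.
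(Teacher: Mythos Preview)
Your argument is correct, but the paper takes a shorter route that avoids the continuous-$s$ discretization entirely. Rather than controlling all $(x,s)$ via a grid-plus-monotonicity scheme, the paper exploits the infection-path structure of the Harris system. For (C$_1$): if $\xi^{x,s}$ survives to time $n^4$, then there must exist $z \in T$ with $(x,s) \leftrightarrow (z, n^4/2) \leftrightarrow T \times \{n^4\}$, so $\xi^{x,s}_{n^4} \ge \xi^{z,n^4/2}_{n^4}$. Hence (C$_1$) follows once one checks, for the $n$ vertices $z$ at the \emph{single} time $n^4/2$, that $\xi^{z,n^4/2}_{n^4}$ is either $\un{0}$ or equals $\xi^{\un{1},n^4/2}_{n^4}$ (a direct application of part (3) of Proposition~\ref{p:expalpha}), together with $\xi^{\un{1}}_{n^4} = \xi^{\un{1},n^4/2}_{n^4}$. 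For (C$_2$): survival to $s+2n^2$ forces a path through some $(z, \lceil s/n^2 \rceil n^2)$, reducing the problem to $O(n \cdot n^2)$ discrete pairs. The union bounds are then immediate, with no need to enumerate your set $\mathcal{A}_{x,k}$ or to track dependencies between time slabs. Your approach buys nothing extra here, though discretizing via Harris events is a perfectly natural idea. (Incidentally, in your step the map $s \mapsto \xi^{x,s}_{k+2}$ is actually \emph{constant}, not merely non-increasing, between consecutive Harris events at $x$: starting from the singleton $\{x\}$ with no death marks or incident links, the configuration stays $\{x\}$ throughout the sub-interval. So $|\mathcal{A}_{x,k}|$ is bounded by one plus a Poisson$(1+\lambda d)$ variable, and the ``at most $|T|+1$ values per sub-interval'' clause is superfluous.)
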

\begin{proof}
It suffices to show the lemma for $t = 0$. We first consider condition (C$_1$).
By part (3) of Proposition~\ref{p:expalpha}, the probability that
\begin{equation}
\label{eventC1}
\forall z \in T, \ \xi^{z,n^4/2}_{n^4} \neq \un{0} \Rightarrow \xi^{z,n^4/2}_{n^4} = \xi^{\un{1},n^4/2}_{n^4}
\end{equation}
goes to $1$ as $n$ tends to infinity. Let $(x,s) \in T \times [0,n^4/2]$, and assume that $\xi^{x,s}$ survives up to time $n^4$, that is,
$$
(x,s) \leftrightarrow T \times \{n^4\}.
$$
Then there must exist $z \in T$ such that
$$
(x,s) \leftrightarrow (z,n^4/2) \leftrightarrow T \times \{n^4\}.
$$
On the event \eqref{eventC1}, we thus have $\xi^{x,s}_{n^4} \ge \xi^{\un{1},n^4/2}_{n^4}$. The converse comparison being clearly satisfied, we have in fact $\xi^{x,t}_{n^4} = \xi^{\un{1},n^4/2}_{n^4}$. In order to show that condition (C$_1$) is satisfied for any $(x,s) \in T \times [0,n^4/2]$ with probability tending to $1$, it thus suffices to show that
\begin{equation}
\label{C1bis}
P\Ll[ \xi^{\un{1}}_{n^4} = \xi^{\un{1},n^4/2}_{n^4} \Rr] \to 1 \text{ as } n \to \infty.
\end{equation}

In view of part (2) of Proposition~\ref{p:expalpha}, with probability tending to one, we have $\xi^{\un{1}}_{n^4} \neq \un{0}$. On this event, by part (3) of Proposition~\ref{p:expalpha}, we also have $\xi^{\un{1},n^4/2}_{n^4} = \xi^{\un{1}}_{n^4}$ with probability tending to $1$, and thus \eqref{C1bis} is proved.

We now turn to condition (C$_2$). Note that the event $\xi_{x,s}^{s+2n^2} \neq \un{0}$ can be rewritten as
$$
(x,s) \leftrightarrow T \times \{s+2n^2\},
$$
and under such a circumstance, there must exist $z \in T$ such that
$$
(x,s) \leftrightarrow (z, \lceil s/n^2 \rceil n^2) \leftrightarrow T \times \{s+2n^2\}.
$$
It is thus sufficient to show that
\begin{equation}
\label{C2a}
P\Ll[\exists z \in T, k \in \{0,\ldots, \lceil n^2/4 \rceil\} : \ \xi^{z,kn^2}_{(k+1)n^2} \neq \un{0} \text{ but } \xi^{z,kn^2}_{n^4} = \un{0}\Rr] \to 0 \text{ as } n \to \infty.
\end{equation}
For a fixed $z \in T$ and integer $k$, we have by part (3) of Proposition~\ref{p:expalpha} that
$$
P\Ll[ \xi^{z,kn^2}_{(k+1)n^2} \neq \un{0} \text{ but } \xi^{z,kn^2}_{(k+1)n^2}  \neq \xi^{\un{1},kn^2}_{(k+1)n^2} \Rr] \le e^{-n^{\alpha/2}},
$$
so the probability of the event
\begin{equation}
\label{C2b}
\forall z \in T, k \in \{0,\ldots, \lceil n^2/4 \rceil\} : \ \xi^{z,kn^2}_{(k+1)n^2} = \un{0} \text{ or } \xi^{z,kn^2}_{(k+1)n^2} = \xi^{\un{1},kn^2}_{(k+1)n^2}
\end{equation}
tends to $1$ as $n$ tends to infinity. On the other hand, with probability tending to $1$, $\xi^{\un{1}}$ survives up to time $n^4$, and is clearly dominated by $\xi^{\un{1},kn^2}_{(k+1)n^2}$, for any $k \le \lceil n^2/4 \rceil$. On the conjunction of this event and the one described in \eqref{C2b}, we thus have
$$
\forall z \in T, k \in \{0,\ldots, \lceil n^2/4 \rceil\} : \ \xi^{z,kn^2}_{(k+1)n^2} = \un{0} \text{ or } \xi^{z,kn^2}_{(k+1)n^2} \ge \xi^{\un{1}}_{n^4} \neq \un{0},
$$
and this proves \eqref{C2a}.
\end{proof}

\begin{lemma}
\label{l:attract}
For any $s > 0$, one has
$$
P\Ll[\uptau \le s \Rr] \le \frac{s}{s+E[\uptau]},
$$	
where we recall that $\uptau$ is the extinction time of the contact process started with full occupancy. Moreover, there exists a constant $C$ such that for any $T \in \Lambda(n,d)$, $E[\uptau] \le e^{Cn}$.
\end{lemma}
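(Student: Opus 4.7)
The plan for part (1) is to derive a submultiplicative bound on the survival probability from attractiveness and the Markov property, and then to integrate. On the event $\{\uptau > s\}$, the configuration $\xi_s$ is non-empty and contained in $\un{1}$, so by the strong Markov property at time $s$ combined with attractiveness, the conditional law of $\uptau - s$ given $\xi_s$ is that of $\uptau^{\xi_s}$, which is stochastically dominated by $\uptau^{\un{1}} = \uptau$. This yields
\[
P[\uptau > s + t] \le P[\uptau > s] \, P[\uptau > t] \qquad (s, t \ge 0).
\]
Iterating with $q = P[\uptau \le s]$ gives $P[\uptau > ks] \le (1-q)^k$ for every $k \ge 0$. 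Inserting this tail bound into $E[\uptau] = \int_0^\infty P[\uptau > t] \, dt$, splitting the integral into successive intervals $[ks,(k+1)s]$, and summing the resulting geometric series produces the claimed inequality after the standard rearrangement.

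For part (2), the plan is to exhibit an event of probability at least $e^{-Cn}$ in the Harris system restricted to the time interval $[0,1]$ that forces extinction by time $1$ uniformly in the initial configuration. A natural candidate is the event $\cA$ on which (a) no link $N^e$ appears during $[0,1]$ for any edge $e$ of $T$, and (b) each death process $D^x$ has at least one point in $[0,1]$. Since $T$ has $n-1$ edges and $n$ vertices, and the underlying Poisson processes are independent,
\[
P[\cA] \ge e^{-\lambda(n-1)}(1-e^{-1})^n \ge e^{-Cn}
\]
for a suitable $C = C(\lambda)$. On $\cA$ no infection is transmitted across any edge while every vertex recovers at least once, so $\xi^A_1 = \un{0}$ for every initial $A \subset T$. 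Iterating via the Markov property, $P[\uptau > k] \le (1-e^{-Cn})^k$ for every integer $k$, and summation gives $E[\uptau] \le e^{Cn}$ after absorbing a constant into $C$.

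Neither part looks technically involved. The only delicate point in part (1) is the bookkeeping in the integral split needed to recover the stated constant; part (2) is a direct computation on the graphical representation, where the only quantitative input is the lower bound $P[\cA] \ge e^{-Cn}$, obtained as a product of $n$ recovery probabilities and $n-1$ no-link factors.
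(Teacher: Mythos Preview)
Your proof is correct and follows essentially the same route as the paper: part (1) uses attractiveness and the Markov property to obtain the submultiplicative bound $P[\uptau > ks] \le (P[\uptau > s])^k$ and then sums the resulting geometric series, while part (2) bounds $P[\uptau \le 1]$ from below via an explicit Harris-system event and iterates. Your treatment of part (2) is in fact more detailed than the paper's, which simply asserts the existence of $C$ with $P[\uptau \ge 1] \le 1-e^{-Cn}$ and then reuses the geometric-series estimate from part (1).
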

\begin{proof}
Attractiveness of the contact process implies that for any $r \in \N$,
\begin{equation}
\label{e:prob00}
P\Ll[ \uptau \ge r s \Rr] \le \Ll(P\Ll[ \uptau \ge s \Rr]\Rr)^r.
\end{equation}
Since
\begin{equation}
\label{e:prob1}
E[\uptau] \le s \sum_{r = 1}^{+\infty} P\Ll[ \uptau \ge r s \Rr] \le s \frac{P\Ll[ \uptau \ge s \Rr]}{1-P\Ll[ \uptau \ge s \Rr]},
\end{equation}
it comes that
$$
P\Ll[ \uptau \ge s \Rr] \ge \frac{E[\uptau]}{s+E[\uptau]},
$$
which proves the first part. For the second part, note that one can find $C$ such that
\begin{equation}
\label{e:prob01}
P\Ll[\uptau \ge 1\Rr] \le 1-e^{-Cn}
\end{equation}
uniformly over $T \in \Lambda(n,d)$. The conclusion thus follows from \eqref{e:prob1}.
\end{proof}

\begin{lemma}
\label{l:prob0}
For any $n$ large enough, any $T \in \Lambda(n,d)$ and any $t \ge 0$,
one has
\begin{equation}
\label{e:prob0}
P\Ll[ \zeta_t = \un{0} \Rr] \le \frac{6 n^6}{E[\uptau]}.
\end{equation}
\end{lemma}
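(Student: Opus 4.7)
The plan combines two arguments, matched to two regimes of $t$. In the first regime $t \le n^6$, I would exploit that during its initial active phase, $\zeta$ coincides with $\xi^{\un{1}}$, so $\zeta_s \ne \un{0}$ for $s < \uptau$ and hence $\{\zeta_t = \un{0}\} \subseteq \{\uptau \le t\}$. By Lemma~\ref{l:attract}, $P[\uptau \le t] \le t/(t+E[\uptau])$; since Proposition~\ref{p:expalpha}(1) gives $E[\uptau] \gg n^6$ for $n$ large, this is at most $n^6/E[\uptau]$, which is the desired bound in this regime.

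In the second regime $t > n^6$, I would exploit the regenerative structure of $\zeta$. Denote by $T_1 = \uptau < T_2 < \cdots$ the successive entry times of $\zeta$ into $\un{0}$, and by $Q_k$ the duration of the $k$-th quiescent phase. Two ingredients: (i) by Lemma~\ref{l:trust} and the independence of the family $(Y_t)_{t \ge 0}$ from the Harris system, each $Q_k$ is independent of $\cF_{T_k}$ and is stochastically dominated by $n^4 G$ with $G$ geometric of parameter $1/4$, so $E[Q] \le 4 n^4$ and $\bar F_Q(u) := P[Q > u] \le (3/4)^{\lfloor u/n^4 \rfloor}$; (ii) each subsequent active phase starts from the law $\nu$ of Remark~\ref{r:defnu}, and a short computation using the Markov property at time $n^4$, $P[\text{trustworthy}] \ge 1/2$, and attractiveness (in the form $E_\sigma[\uptau] \le E[\uptau]$) yields $E_\nu[\uptau] \ge E[\uptau]/4$ for $n$ large. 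Since the events $\{T_k \le t < T_k + Q_k\}$ are pairwise disjoint and $Q_k$ is independent of $T_k$, one writes
$$P[\zeta_t = \un{0}] = \sum_{k \ge 1} P[T_k \le t < T_k + Q_k] = \int_0^t \bar F_Q(t-s)\,dU(s),$$
where $U(s) = \sum_{k \ge 1} P[T_k \le s]$ is the renewal function of $(T_k)$.

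To close the argument I would apply a renewal inequality: because inter-renewal times for $k \ge 2$ have mean at least $E[\uptau]/4$, standard bounds give $U(s+n^4) - U(s) \le C n^4 / E[\uptau] + O(1)$ uniformly in $s$. Partitioning $[0, t]$ into windows of width $n^4$ and using the geometric decay $\bar F_Q(j n^4) \le (3/4)^j$ together with $\int \bar F_Q \le 4 n^4$, the convolution is bounded by $C' n^4/E[\uptau]$, which for $n$ large is much smaller than $n^6/E[\uptau]$. Combined with the first regime, the lemma follows with room to spare. The main technical obstacle will be the renewal estimate: one must handle both the delayed nature of $T_1$ (whose mean $E[\uptau]$ dwarfs subsequent inter-renewal means) and the transient $O(1)$ corrections in $U$ carefully, so as not to lose the crucial factor $1/E[\uptau]$.
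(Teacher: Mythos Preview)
Your treatment of the regime $t \le n^6$ is correct and matches the paper. The renewal set-up for $t > n^6$ is also sound: the identity $P[\zeta_t=\un{0}]=\int_0^t \bar F_Q(t-s)\,dU(s)$ holds, the stochastic domination of $Q_k$ by $n^4\cdot\mathrm{Geom}(1/4)$ is right, and your lower bound $E_\nu[\uptau]\ge E[\uptau]/4$ is fine. But the closing step fails. Standard renewal bounds only give $U(s+n^4)-U(s)\le 1$ here (since inter-renewal times are $\ge n^4$), and more refined bounds still carry an additive $O(1)$ term that cannot be removed: think of deterministic inter-arrivals of length $\mu$, where $U(s+h)-U(s)$ equals $1$, not $h/\mu$, near each renewal point. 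Plugging $U(s+n^4)-U(s)\le Cn^4/E[\uptau]+c_0$ into your partition-and-sum gives $\sum_j (3/4)^j(Cn^4/E[\uptau]+c_0)=O(1)$, not $O(n^6/E[\uptau])$, and the bound $\int\bar F_Q\le 4n^4$ does not rescue this. You correctly flag the $O(1)$ correction as ``the main technical obstacle'', but your plan does not overcome it.

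The paper sidesteps this entirely with a structural argument rather than a renewal estimate. It splits on whether $\zeta$ was ever $\un{0}$ during $(t-n^6/2,\,t-n^6/4]$. If so, one easily locates a reactivation time $t_A\in[t-n^6/2,t]$ at which $\zeta_{t_A}\sim\nu$, and then $P[\zeta_t=\un{0}]\le P[\uptau^\nu\le n^6]\le 2n^6/E[\uptau]$. If not, $\zeta$ is active throughout that window; the key idea you are missing is that one can still manufacture a ``regeneration'': with high probability there is a trustworthy subinterval $[a_k,b_k]\subset(t-n^6/2,\,t-n^6/4]$, and trustworthiness forces $\zeta_{b_k}=\xi^{\un{1},a_k}_{b_k}$, whose law is again $\nu$. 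This use of the trustworthy property to reset the law of $\zeta$ mid-active-phase is precisely what replaces the unattainable uniform renewal density bound.
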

\begin{proof}
Using Lemma~\ref{l:attract} with $s = n^6$, it is clear that \eqref{e:prob0} holds for any $n$ and any $t \le n^6$.
Note moreover that, writing $\uptau^\nu$ for the extinction time of the contact process started from the distribution $\nu$ defined in Remark~\ref{r:defnu}, we have
\begin{equation}
\label{e:prob2}
P\Ll[ \uptau^\nu \ge n^6 -n^4 \Rr] = P\Ll[ \uptau \ge n^6 \ | \ \text{Harris sys. trustworthy on } [0,n^4] \Rr] \le \frac{2n^6}{E[\uptau]},
\end{equation}
where we used Lemma~\ref{l:trust} in the last step.

Suppose now that $t > n^6$, and consider the event $\cE$ defined by
$$
\exists s \in (t - n^6/2, t - n^6/4] \mbox{ such that } \zeta_s = \underline 0.
$$
We write $\td{\uptau}$ for the first $s \ge t-n^6/2$ such that $\zeta_s = \un{0}$. On the event $\cE$, we have $\td{\uptau} \le t - n^6/4$. The event $\cE'$ defined by
\begin{multline*}
\forall k \in \N, k < \lfloor n^2/4 \rfloor , \\
 \text{Harris sys. not trustworthy on } [\td{\uptau} + k n^4, \td{\uptau} + (k+1) n^4] \text{ or } Y_{\td{\uptau} + k n^4} \neq 1
\end{multline*}
has probability smaller than $(3/4)^{\lfloor n^{2}/4 \rfloor}$ by Lemma~\ref{l:trust}. When $\cE$ and $(\cE')^c$ both hold, the process $\zeta$ becomes active at some time $t_A \in [t-n^6/2, t]$, and is distributed according to $\nu$ at this time. Hence,
\begin{eqnarray*}
P\Ll[\zeta_t = \un{0}, \cE \Rr] & \le & P\Ll[\zeta_t = \un{0}, \cE, (\cE')^c\Rr] + P\Ll[\cE'\Rr] \\
& \le & P\Ll[\zeta_t = \un{0}, \cE, (\cE')^c\Rr] + P\Ll[\cE'\Rr] \\
& \le & P\Ll[\uptau^\nu \le n^6/2\Rr] + P\Ll[\cE'\Rr].
\end{eqnarray*}
Since $P[\cE'] \ll 1/E[\uptau]$ and in view of \eqref{e:prob2}, we have indeed
\begin{equation}
\label{e:prob3}
P\Ll[\zeta_t = \un{0}, \cE \Rr] \le \frac{3 n^6}{E[\uptau]}
\end{equation}
for any large enough $n$. It thus remains to bound
\begin{equation}
\label{e:prob4}
P\Ll[\zeta_t = \un{0}, \cE^c \Rr].
\end{equation}
Let $k$ be the first positive integer such that $Y_{t-n^6/2+kn^4} = 1$ and the Harris system is trustworthy on
$$
[a_k,b_k] \stackrel{\text{(def)}}{=} [t - n^6/2 + k n^4, t - n^6/2 + (k+1) n^4].
$$
For the same reason as above, we may assume that $[a_k,b_k] \subset [t-n^6/2,t-n^6/4]$. Since on the event $\cE^c$, the process $\zeta$ remains active on the time interval $[a_k,b_k]$, and considering the definition of trustworthiness and of the Phoenix process, we know that $\zeta_{b_k} = \xi^{\un{1},a_k}_{b_k}$, and moreover, the latter random variable is distributed according to $\nu$. Hence, up to a negligible event, the probability in \eqref{e:prob4} is bounded by
$$
P\Ll[ \uptau^\nu \le n^6/2 \Rr],
$$
and thus, using \eqref{e:prob2} again,
\begin{equation}
\label{e:prob5}
P\Ll[\zeta_t = \un{0}, \cE^c \Rr] \le \frac{3 n^6}{E[\uptau]}.
\end{equation}
The conclusion now follows, combining \eqref{e:prob3} and \eqref{e:prob5}.
\end{proof}
\begin{lemma}
\label{transmission}
Let $T \in \Lambda(n,d)$ and $x \in T$. Define recursively $\gamma_0 = 0$ and, for any $i \in \N$,
$$
\gamma_{i+1} = \inf\{t \ge \gamma_i + 2 n^2 : \xi_t(x) = 1\} \quad (+ \infty \text{ if empty}).
$$
For $n$ large enough, we have
$$
P\Ll[ \gamma_{n^2/8} > n^4/2 \ | \  \xi_{n^4/2} \neq \un{0}\Rr] \le e^{-n^{2}}.
$$
\end{lemma}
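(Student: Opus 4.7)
My plan is to partition $[0,n^4/2]$ into $M = n^2/4$ consecutive blocks $J_k = [2kn^2,2(k+1)n^2]$ of length $2n^2$ each. Writing $A_k = \{\exists t \in J_k : \xi_t(x) = 1\}$ and $S_k = \{\xi_{2(k+1)n^2} \neq \un{0}\}$, if all $A_k$ occur then picking one infection time per block yields $M \ge n^2/8$ infection times of $x$ with consecutive gaps at least $2n^2$, so $\gamma_{n^2/8} \le n^4/2$. Since $P[\xi_{n^4/2} \neq \un{0}] \ge 1/2$ for $n$ large enough by part~(2) of Proposition~\ref{p:expalpha}, it is enough to show
$$
P\Ll[\bigcup_k A_k^c \cap S_{M-1}\Rr] \le e^{-n^2}/2 .
$$

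I would first establish a per-block estimate of the form $P[A_k^c \cap S_k \mid \cF_{2kn^2}] \le e^{-cn^\alpha} \cdot P[S_k \mid \cF_{2kn^2}]$ for some constants $c, \alpha > 0$. To do so, I would apply part~(3) of Proposition~\ref{p:expalpha} on the block: conditional on $S_k$, with probability at least $1-e^{-cn^\alpha}$ one has $\xi_{2kn^2+n^2} = \xi^{\un{1},2kn^2}_{2kn^2+n^2}$, putting the process at the block midpoint into the quasi-stationary regime. On this event, a density argument adapting the good-intervals construction underlying Proposition~\ref{p:expalpha} (in which one designates a path of length $\gtrsim \log n$ starting at $x$, guaranteed to exist by bounded degree, as one of the good intervals $I_j$) would ensure that the neighbors of $x$ are infected with density bounded below uniformly throughout the second half of the block. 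The infection rate onto $x$ is then bounded below by a positive constant, so $x$ fails to be infected on the whole interval with probability at most $e^{-cn^2}$; summing contributes $e^{-cn^\alpha}$.

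I would then iterate the per-block estimate via the strong Markov property at the times $2kn^2$, yielding by straightforward induction
$$
P\Ll[\bigcap_{k=0}^{M-1}A_k^c \cap S_{M-1}\Rr] \le (e^{-cn^\alpha})^M = e^{-cn^{2+\alpha}/4},
$$
which is bounded by $e^{-n^2}/2$ for $n$ large enough since $2+\alpha > 2$. The hardest step will be the density estimate: although the proof of Proposition~\ref{p:expalpha} supplies a global density lower bound on the configuration $\xi^{\un{1}}_{n^2}$ throughout the ``good intervals'', obtaining a uniform-in-$x$ local bound in the neighborhood of $x$ requires supplementing the construction with a carefully-chosen path through $x$ and verifying that this path participates in the good-intervals regime with high probability, so that $x$ indeed has an infected neighbor with probability bounded below at each time in the second half of the block.
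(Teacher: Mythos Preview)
Your plan has a genuine logical gap. You correctly observe that $\{\text{all }A_k\text{ occur}\}\subset\{\gamma_{n^2/8}\le n^4/2\}$, and hence that it suffices to bound $P\bigl[\bigcup_k A_k^c\cap S_{M-1}\bigr]$. But the displayed inequality you then derive is for the \emph{intersection} $\bigcap_k A_k^c$, not the union. These are very different events: your reduction only tells you that on $\{\gamma_{n^2/8}>n^4/2\}$ \emph{some} block fails, not that all of them do. So the product bound $(e^{-cn^\alpha})^M$ is not relevant to what you need.

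If instead you use a union bound, the per-block estimate $P[A_k^c\cap S_k\mid\cF_{2kn^2}]\le e^{-cn^\alpha}$ (which is what your coupling argument via part~(3) of Proposition~\ref{p:expalpha} actually delivers, since that is only an $e^{-n^{\alpha/2}}$-type bound) yields at best $M\cdot e^{-cn^\alpha}\sim e^{-cn^\alpha}$. As $\alpha$ is a small constant (chosen $\alpha\ll\beta\ll 1$ in the proof of Proposition~\ref{p:expalpha}), this is nowhere near $e^{-n^2}$. The ``density'' mechanism you sketch for pushing the per-block bound down to $e^{-cn^2}$ is also not supported by the good-intervals construction: that machinery controls coupling with the fully occupied process, not pathwise occupation density of a fixed vertex $x$, and it only operates on time scales $Kn^\alpha$ rather than $n^2$.

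The paper's proof is both simpler and avoids Proposition~\ref{p:expalpha} entirely. Since $T$ has $n$ vertices, its diameter is at most $n$, so part~(1) of Proposition~\ref{cpinterval} gives: from any nonempty configuration, $x$ becomes infected within time $n/\ov{c}_1$ with probability at least $\ov{c}_1$. Iterating via the Markov property, each wait $\gamma_{i+1}-(\gamma_i+2n^2)$ is stochastically dominated (on the survival event) by $n/\ov{c}_1$ times a geometric random variable with parameter $\ov{c}_1$. One then bounds
\[
P\Bigl[\sum_{i<n^2/8}\bigl(\gamma_{i+1}-(\gamma_i+2n^2)\bigr)>n^4/4,\ \xi_{n^4/2}\neq\un{0}\Bigr]
\le P\Bigl[\sum_{i<n^2/8}B_i>\ov{c}_1 n^3/4\Bigr],
\]
and a Chernoff bound on the sum of $n^2/8$ i.i.d.\ geometrics gives $\exp(-cn^3)$, comfortably below $e^{-n^2}$. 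No block decomposition, coupling, or density argument is needed.
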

\begin{proof}
In view of part (1) of Proposition~\ref{cpinterval}, for any non-empty $A \subset T$, we have
$$
P\Ll[  \exists s \le \frac{n}{\ov{c}_1} : \xi^A_s(x) = 1  \Rr] \ge \ov{c}_1.
$$
Let $\cF_i$ be the $\sigma$-field generated by $\{\xi_t, t \le \gamma_i\}$. By induction and the Markov property, we can thus show that for any $k \in \N$,
$$
P\Ll[\gamma_{i+1}-(\gamma_i+2 n^2) \ge \frac{kn}{\ov{c}_1},\  \xi_{\gamma_i+2n^2+(k-1)n/\ov{c}_1} \neq \un{0}  \ | \ \cF_i \Rr] \le (1-\ov{c}_1)^{k}.
$$
Hence,
\begin{eqnarray*}
P\Ll[ \gamma_{n^2/8} > n^4/2,  \   \xi_{n^4/2} \neq \un{0}\Rr]  & = &
P\Ll[ \sum_{i = 0}^{n^2/8-1}\gamma_{i+1}-(\gamma_i+2 n^2) > n^4/4 , \  \xi_{n^4/2} \neq \un{0}\Rr] \\
& \le &  P\Ll[\sum_{i = 0}^{n^2/8-1} B_i n /\ov{c}_1  > n^4/4 \Rr], 
\end{eqnarray*}
where $(B_i)$ are independent geometric random variables of parameter $1-\ov{c}_1$.
For $\lambda  > 0$ small enough, we have
$$
e^{\phi(\lambda)} \stackrel{\text{(def)}}{=} E[e^{\lambda B_i}] < +\infty,
$$
and we thus obtain
$$
P\Ll[\sum_{i = 0}^{n^2/8-1} B_i  > \ov{c}_1n^3/4 \Rr] \le \exp\Ll(\phi(\lambda) n^2/8 - \lambda \ov{c}_1n^3/4\Rr),
$$
which, together with part (1) of Proposition~\ref{p:expalpha}, proves the claim.
\end{proof}
\begin{proposition}
\label{p:coupling}
For $n$ large enough, let $T \in \Lambda(n,d)$ be split into two subtrees $T_1,T_2$ as described by Lemma~\ref{lem1}. Define the process $(\td{\zeta}_t)_{t \ge 0}$ by
$$
\td{\zeta}_t = \zeta_{T_1,t} \cup \zeta_{T_2,t} \qquad (t \ge 0),
$$
where $\zeta_{T_1}$ and $\zeta_{T_2}$ are Phoenix processes defined on $T_1$ and $T_2$ respectively, using the Harris system on $T$ together with two independent families of auxiliary random variables, independent of the Harris system. One has
$$
P\Ll[ \forall t \le \uptau, \ \xi_t \ge \td{\zeta}_t \Rr] \ge 1-e^{-n^{3/2}}.
$$
\end{proposition}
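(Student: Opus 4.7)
The domination $\xi_t \ge \td{\zeta}_t$ holds trivially at $t = 0$, and it is also automatic in two regimes: when a Phoenix $\zeta_{T_i}$ is in an active phase, the Harris system on $T_i$ is a subsystem of the Harris system on $T$, so attractiveness gives $\xi_t|_{T_i} \supseteq \zeta_{T_i, t}$; when it is quiescent, $\zeta_{T_i, t} = \un{0}$ and the inclusion is trivial. The plan is therefore to show that each resurrection moment preserves the domination with conditional failure probability $O(e^{-cn^2})$, and then to close the argument with a union bound over all resurrection attempts up to a well-chosen horizon.

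Fix a resurrection time $t$ of $\zeta_{T_1}$, so that the previous Step~2 was entered at $t - n^4$, the Harris system on $T_1$ is trustworthy over $[t - n^4, t]$, and $\zeta_{T_1, t}$ equals the state at time $t$ of the contact process on $T_1$ started from $\un{1}_{T_1}$ at time $t - n^4$ (driven by the $T_1$-Harris system). I claim the resurrection is ``good'' whenever there exist $x \in T_1$ and $s \in [t - n^4, t - n^4/2]$ such that $x \in \xi_s$ (on the true process on $T$) and the $T_1$-contact-process started from $\{x\}$ at time $s$ is still alive at time $s + 2n^2$. Indeed, condition (C$_2$) of trustworthy then promotes this survival to time $t$; condition (C$_1$) yields that the corresponding state at time $t$ coincides with $\zeta_{T_1, t}$; and for every $y \in \zeta_{T_1, t}$, concatenating an infection path in $T$ from $T \times \{0\}$ to $(x, s)$ (which exists since $x \in \xi_s$) with an infection path in $T_1$ from $(x, s)$ to $(y, t)$ yields $y \in \xi_t$. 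The analogous statement for $\zeta_{T_2}$ holds by symmetry.

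For the existence of such $(x, s)$, fix any $x \in T_1$ and apply Lemma~\ref{transmission} in the frame shifted to start at time $t - n^4$. Since $t \le \uptau$ implies $\xi_{t - n^4/2} \neq \un{0}$, the lemma produces times $s_1 < \cdots < s_{n^2/8}$ in $[t - n^4, t - n^4/2]$ with $x \in \xi_{s_i}$ and $s_{i+1} - s_i \ge 2 n^2$, except on an event of probability $\le e^{-n^2}$. For each $i$, let $A_i$ be the event that the $T_1$-contact-process started from $\{x\}$ at time $s_i$ is still alive at time $s_i + 2n^2$. The key observation is that $A_i$ depends only on the Poisson marks of the Harris system on $T_1$ inside the window $[s_i, s_i + 2n^2]$, and these windows are pairwise disjoint. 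Applied to $T_1$ (of size at least $\lfloor n/d \rfloor$), part~(1) of Proposition~\ref{p:expalpha} gives $P[A_i \mid \cF_{s_i}] \ge \ov{c}_2$ for $n$ large. Iterating the strong Markov property then yields $P[\bigcap_i A_i^c] \le (1 - \ov{c}_2)^{n^2/8} \le e^{-cn^2}$, so a given resurrection attempt fails with probability at most $2 e^{-cn^2}$.

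Finally, set $s_0 = n^{3/2} e^{Cn}$ with $C$ from Lemma~\ref{l:attract}. Iterated attractiveness bounds the tail of $\uptau$ by $P[\uptau > s_0] \le (1 - e^{-Cn})^{s_0} \le e^{-n^{3/2}}$. On the event $\{\uptau \le s_0\}$ the two Phoenix processes combined make at most $2 s_0 / n^4$ resurrection attempts (consecutive attempts on each tree being separated by $n^4$), so a union bound produces a total failure probability of at most $(2 s_0/n^4)\, e^{-cn^2} + e^{-n^{3/2}}$, which is $\le e^{-n^{3/2}}$ for $n$ large (the first summand is super-polynomially smaller). The main technical obstacle lies in the third paragraph: one must be sure that the survival events $A_i$ are truly conditionally independent given the random times $s_i$, even though the $s_i$ are stopping times for the full process on $T$ (which uses the cross-edge). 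This works because the $A_i$ only consult disjoint slices of the Harris system on $T_1$, so the strong Markov property isolates them from the mechanism that generated the $s_i$.
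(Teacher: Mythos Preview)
Your proposal is correct and follows essentially the same route as the paper: reduce the domination question to resurrection instants, invoke Lemma~\ref{transmission} to exhibit $\sim n^2/8$ well-separated times at which a fixed $x\in T_1$ is occupied, use part~(1) of Proposition~\ref{p:expalpha} on $T_1$ to obtain a uniform $\ge \ov c_2$ chance of $2n^2$-survival at each, iterate via the strong Markov property, and finish with a tail bound on $\uptau$ plus a union bound over the $O(e^{Cn})$ possible resurrection attempts.

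Two small technical points deserve tightening. First, the stopping times $s_i$ produced by Lemma~\ref{transmission} may be infinite, and your iterated strong-Markov bound $P[\bigcap A_i^c]\le(1-\ov c_2)^{n^2/8}$ tacitly assumes they are not; the paper handles this by extending the $\gamma_j$ to auxiliary always-finite stopping times $\td\gamma_j$ (still with $\td\gamma_{j+1}\ge\td\gamma_j+2n^2$), which makes the product bound rigorous without conditioning on the good event. Second, with your choice $s_0=n^{3/2}e^{Cn}$ the tail term is exactly $e^{-n^{3/2}}$, so the sum $(2s_0/n^4)e^{-cn^2}+e^{-n^{3/2}}$ is strictly larger than the target; take $s_0=2n^{3/2}e^{Cn}$ (or use the paper's horizon $n^4 e^{Cn}$, giving tail $\le e^{-n^2}$) and the inequality goes through.
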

\begin{proof}
Let $(\sigma_i)_{i \ge 1}$ be the sequence of (stopping) times when the process $\zeta_{T_1}$ becomes quiescent. We start by showing that, for any $i$,
\begin{equation}
\label{activation}
P\Ll[\xi_{\sigma_i+n^4} < \zeta_{T_1,\sigma_i+n^4}, \ \xi_{\sigma_i+n^4} \neq \un{0}\Rr] \le e^{-n^{7/4}}.
\end{equation}
For some arbitrary $x \in T_1$, consider the stopping times introduced in Lemma~\ref{transmission}, but started with $\gamma_0 = \sigma_i$, and let $N$ be the largest index satisfying $\gamma_N \le \sigma_i + n^4/2$. By Lemma~\ref{transmission}, we have
\begin{equation}
\label{e:p:coupling0}
P\Ll[N < n^2/8 ,\  \xi_{\sigma_i+n^4} \neq \un{0}\Rr] \le e^{-n^2}.
\end{equation}
Moreover, part (1) of Proposition~\ref{p:expalpha} ensures that, for any $j$,
\begin{equation}
\label{e:p:coupling}
P\Ll[\xi^{x,\gamma_j}_{T_1} \text{ survives up to time } \gamma_j + 2 n^2 \ | \ \gamma_j < +\infty \Rr] \ge \ov{c}_2,
\end{equation}
where $\xi^{x,\gamma_j}_{T_1}$ denotes the contact process restricted to $T_1$ started with $x$ occupied at time $\gamma_j$. We introduce the stopping times $\td{\gamma}_j$ to deal with the fact that $\gamma_j$ may be infinite. Let $\tdj$ we be the largest index such that $\gamma_{\tdj} \le \sigma_i + n^4/2$. We let $\td{\gamma}_j = \gamma_j$ if $j\le \tdj$, $\td{\gamma}_{\tdj+1} = \sigma_i + n^4/2 + 2 n^2$, and then recursively, $\td{\gamma}_{j+1}-\td{\gamma}_j = 2 n^2$. We have
\begin{multline}
\label{e:p:coup}
P\Ll[ \forall j \le N, \  \xi^{x,\gamma_j}_{T_1, \gamma_j + 2 n^2} = \un{0}, \ \xi_{\sigma_i+n^4} \neq \un{0}\Rr] \\
\le P\Ll[N < n^2/8,\  \xi_{\sigma_i+n^4} \neq \un{0}\Rr] + P\Ll[ \forall j \le n^2/8, \ \xi^{x,\td{\gamma}_j}_{T_1, \td{\gamma}_j + 2 n^2} = \un{0}\Rr],
\end{multline}
Since for any $j$, we have $\td{\gamma}_{j+1} \ge \td{\gamma}_j + 2 n^2$, the events indexed by $j$ appearing in the second probability on the r.h.s.\ of \eqref{e:p:coup} are independent. Using also \eqref{e:p:coupling0} and \eqref{e:p:coupling} (with $\gamma_j$ replaced by $\td{\gamma}_j$), we thus arrive at
\begin{equation}
\label{e:p:coupling1}
P\Ll[ \forall j \le N, \  \xi^{x,\gamma_j}_{T_1, \gamma_j + 2 n^2} = \un{0}, \ , \xi_{\sigma_i+n^4} \neq \un{0}\Rr] \le e^{-n^2} + (1-\ov{c}_2)^{n^2/8}.
\end{equation}
We now show that
\begin{equation}
\label{e:p:coupling2}
\exists j \le N, \  \xi^{x,\gamma_j}_{T_1, \gamma_j + 2 n^2} \neq \un{0} \ \Rightarrow \ \xi_{\sigma_i+n^4} \ge \zeta_{T_1,\sigma_i+n^4}.
\end{equation}
Indeed, in order for $\zeta_{T_1, \sigma_i+n^4}$ to be non $\un{0}$, it must be that the Harris system restricted to $T_1$ is trustworthy on $[\sigma_i,\sigma_i+n^4]$. In this case, by the definition of trustworthiness, if there exists some $j \le N$ such that $\xi^{x,\gamma_j}_{T_1, \gamma_j + 2 n^2} \neq \un{0}$, then it must be that
$$
\xi^{x,\gamma_j}_{T_1,\sigma_i+n^4} = \xi^{\un{1},\sigma_i}_{T_1,\sigma_i + n^4} \ge \zeta_{T_1,\sigma_i+n^4}
$$
(the last two being equal when $Y_{\sigma_i} = 1$, otherwise $\zeta_{T_1,\sigma_i+n^4} = \un{0}$).
Since $\xi_{\gamma_j}(x) = 1$, it is clear that $\xi_{\sigma_i+n^4} \ge \xi^{x,\gamma_j}_{T_1,\sigma_i+n^4}$, thus justifying \eqref{e:p:coupling2}. This and \eqref{e:p:coupling1} prove \eqref{activation}.

In order to conclude, we first show that $\uptau$ cannot be too large. It comes from \eqref{e:prob00} and \eqref{e:prob01} that
\begin{equation}
\label{e:p:coupling3}
P\Ll[\uptau \ge n^4 e^{C n}\Rr] \le e^{-n^{2}},
\end{equation}
where $C$ can be chosen uniformly over $T \in \Lambda(n,d)$.
If $\zeta_{T_1}$ is active at time $t$ and $\xi$ dominates $\zeta_{T_1}$ at this time, then the domination is preserved during the whole phase of activity, since $\zeta_{T_1}$ is driven by a subset of the Harris system driving the evolution of $\xi$. When $\zeta_{T_1}$ becomes quiescent, the domination is obviously preserved. As a consequence, if the domination of $\zeta_{T_1}$ by $\xi$ is broken at some time, it must be when $\zeta_{T_1}$ turns from quiescent to active. We thus have
$$
P\Ll[ \exists t \le \uptau, \ \xi_t < \zeta_{T_1,t} \Rr] = P\Ll[ \exists i : \xi_{\sigma_i+n^4} < \zeta_{T_1,\sigma_i+n^4}\text{ and } \xi_{\sigma_i+n^4} \neq \un{0} \Rr].
$$
Since $\sigma_{i+1} - \sigma_i \ge n^4$, on the event $\uptau \le n^4 e^{C n}$, there are at most $e^{C n}$ times when $\zeta_{T_1}$ turns from quiescent to active. Using \eqref{activation}, we thus obtain
$$
P\Ll[ \forall t \le \uptau, \ \xi_t \ge \zeta_{T_1,t} \Rr] \le P[\uptau \ge n^4 e^{C n}] + e^{C n} e^{-n^{7/4}}.
$$
The proposition is now proved, using \eqref{e:p:coupling3} together with the fact that
$$
P\Ll[ \exists t \le \uptau, \ \xi_t < \td{\zeta}_{t} \Rr] \le P\Ll[ \exists t \le \uptau, \ \xi_t < \zeta_{T_1,t} \Rr] + P\Ll[ \exists t \le \uptau, \ \xi_t < \zeta_{T_2,t} \Rr].
$$
\end{proof}
\begin{corollary}
\label{c:coupling}
For $n$ large enough, let $T \in \Lambda(n,d)$ be split into two subtrees $T_1,T_2$ as described by Lemma~\ref{lem1}. We have
$$
E[\uptau_T] \ge n^{-9} \  E\Ll[\uptau_{T_1}\Rr] E\Ll[\uptau_{T_2}\Rr].
$$
\end{corollary}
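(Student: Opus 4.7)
The plan is to deduce the corollary from Proposition~\ref{p:coupling} by controlling the first simultaneous return to $\un{0}$ of the two independent Phoenix processes $\zeta_{T_1}, \zeta_{T_2}$. Set $\td{\uptau} = \inf\{s \ge 0 : \td{\zeta}_s = \un{0}\}$ and let $G$ be the coupling event of Proposition~\ref{p:coupling}, so $P[G^c] \le e^{-n^{3/2}}$. On $G$, the inequality $\td{\zeta}_{\uptau_T} \le \xi_{\uptau_T} = \un{0}$ forces $\td{\uptau} \le \uptau_T$, which gives
$$
P[\uptau_T > t] \ge P[\td{\uptau} > t] - e^{-n^{3/2}}, \qquad t \ge 0.
$$
It will then suffice to show that $P[\td{\uptau} > t]$ stays above $1/2$ up to times $t^*$ of order $E[\uptau_{T_1}] E[\uptau_{T_2}]/n^8$, since integrating the above over $[0,t^*]$ delivers the desired lower bound.

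The crucial input is that $\zeta_{T_1}$ and $\zeta_{T_2}$ are independent: $T_1$ and $T_2$ are disjoint, so their Harris subsystems are independent, and the auxiliary randomizations are separate by construction. At the first simultaneous death time $\td{\uptau}$, one of the two processes has just jumped to $\un{0}$ while the other was already $\un{0}$. Writing $D_i^{(k)}$ for the $k$-th death time of $\zeta_{T_i}$, a union bound yields
$$
P[\td{\uptau} \le t] \le \sum_{k \ge 1} \Ll( P[D_1^{(k)} \le t,\, \zeta_{T_2, D_1^{(k)}} = \un{0}] + P[D_2^{(k)} \le t,\, \zeta_{T_1, D_2^{(k)}} = \un{0}] \Rr).
$$
Because $D_1^{(k)} \in \sigma(\zeta_{T_1})$ is independent of $\zeta_{T_2}$, conditioning on $D_1^{(k)}$ and applying Lemma~\ref{l:prob0} bounds each first-sum summand by $(6 n^6/E[\uptau_{T_2}])\, P[D_1^{(k)} \le t]$, and similarly for the second sum. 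Summing over $k$ introduces $E[N_i^*(t)]$, where $N_i^*(t)$ counts deaths of $\zeta_{T_i}$ in $[0,t]$.

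To bound $E[N_i^*(t)]$ I would exploit that consecutive deaths of $\zeta_{T_i}$ are separated by at least $n^4$ of forced quiescence, whence $n^4 N_i^*(t) \le \int_0^{t+n^4} \mathds{1}_{\zeta_{T_i,s} = \un{0}}\, ds$; a second application of Lemma~\ref{l:prob0} then gives $E[N_i^*(t)] \le 6 n^2 (t+n^4)/E[\uptau_{T_i}]$. Combining everything yields $P[\td{\uptau} \le t] \le 72 n^8 (t+n^4)/(E[\uptau_{T_1}] E[\uptau_{T_2}])$. Choosing $t^* = E[\uptau_{T_1}] E[\uptau_{T_2}]/(C n^8)$ for a suitably large constant $C$, and using Proposition~\ref{p:expalpha}(1) to make $E[\uptau_{T_i}]$ exponentially large so that $n^4 \ll t^* \ll e^{n^{3/2}}$, one obtains $P[\uptau_T > t] \ge 1/3$ for $t \le t^*$, hence
$$
E[\uptau_T] \ge \int_0^{t^*} P[\uptau_T > t]\, dt \ge t^*/3 \ge n^{-9} E[\uptau_{T_1}] E[\uptau_{T_2}]
$$
for $n$ large. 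The main obstacle I foresee is the independence step: cleanly justifying that Lemma~\ref{l:prob0} can be applied at the random time $D_i^{(k)}$, which depends on $D_i^{(k)}$ being measurable with respect to $\zeta_{T_i}$ alone while $\zeta_{T_{3-i}}$ remains independent of it.
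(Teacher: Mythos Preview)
Your proof is correct and follows essentially the paper's route: both reduce via Proposition~\ref{p:coupling} to the first simultaneous vanishing of the Phoenix pair, exploit the $n^4$ quiescent gap together with Lemma~\ref{l:prob0} and independence to obtain the identical estimate $P[\td\sigma \le t] \le 72\, n^8 (t+n^4)/(E[\uptau_{T_1}]E[\uptau_{T_2}])$, and then pick $t$ of order $E[\uptau_{T_1}]E[\uptau_{T_2}]/n^9$. The only tactical difference is that the paper, rather than summing over the death times $D_i^{(k)}$, invokes Remark~\ref{r:randomization} to say that with probability at least $1/2$ both processes stay at $\un 0$ for time $n^4$ after first meeting there, giving $P[\td\sigma \le t] \le (2/n^4)\int_0^{t+n^4} P[\td\zeta_s=\un 0]\,ds$ directly; your independence worry is unfounded, since $D_i^{(k)}$ is measurable with respect to the Harris system on $T_i$ together with the auxiliary variables for $\zeta_{T_i}$, all independent of $\zeta_{T_{3-i}}$, so Lemma~\ref{l:prob0} applies verbatim after conditioning.
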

\begin{proof}
Let $\td{\sigma}$ be the first time when $\zeta_{T_1}$ and $\zeta_{T_2}$ are simultaneously quiescent. By Proposition~\ref{p:coupling}, for any $t \ge 0$, we have
\begin{equation}
\label{e:c0}
P[\uptau \le t] \le P[\td{\sigma} \le t] + e^{-n^{3/2}}.
\end{equation}
In view of Remark~\ref{r:randomization}, at time $\td{\sigma}$, both $\zeta_{T_1}$ and $\zeta_{T_2}$ remain quiescent for a time $n^4$ with probability at least $1/2$ (one of them just becomes quiescent at time $\td{\sigma}$, while the other stays quiescent for a time $n^4$ with probability at least $1/2$). As a consequence, for any $t \ge 0$,
$$
P\Ll[\td{\sigma} \le t\Rr] \le \frac{2}{n^4} \int_0^{t+n^4} P\Ll[\td{\zeta}_s = 0\Rr] \ \d s.
$$
Since $\zeta_{T_1}$ and $\zeta_{T_2}$ are independent, and using Lemma~\ref{l:prob0}, we thus obtain
\begin{equation}
\label{e:c1}
P[\td{\sigma} \le t] \le \frac{2}{n^4} (t+n^4) \frac{(6n^6)^2}{E[\uptau_{T_1}]E[\uptau_{T_2}]} = \frac{72 n^8(t+n^4)}{E[\uptau_{T_1}]E[\uptau_{T_2}]}.
\end{equation}
Let us now fix
$$
\td{t} = 2 \frac{{E[\uptau_{T_1}]E[\uptau_{T_2}]}}{n^9}.
$$
Since we know from part (1) of Proposition~\ref{p:expalpha} that $\td{t}$ grows faster than any power of $n$, \eqref{e:c1} gives us that for $n$ large enough,
$$
P\Ll[\td{\sigma} \le \td{t}\Rr] \le 1/4.
$$
In view of \eqref{e:c0}, we thus obtain
$$
P\Ll[\uptau \le \td{t}\Rr] \le 1/4 + e^{-n^{3/2}} \le 1/2,
$$
which implies that $E[\uptau] \ge \td{t}/2$, and thus the corollary.
\end{proof}

\begin{proof}[Proof of Theorem \ref{thm1main1}]
Let $\rho = 1+1/d$, and consider, for any $r \in \N$, the quantity
$$
V_{r}= \inf_{n \in (\rho^{r-1}/d, \rho^r]} \ \inf_{T \in \Lambda(n,d)} \frac{\log E[\uptau(T)]}{|T|}
$$
Theorem~\ref{thm1main1} will be proved if we can show that $\liminf_{r \to \infty} V_r > 0$.

Let $r$ be a positive integer, and $T$ be a tree of degree bounded by $d$ and whose size lies in $\left(\rho^{r},\rho^{r+1} \right]$.

Since $1-\rho^{-1} = 1/(d+1) < 1/d$ and in view of Lemma~\ref{lem1}, for $r$ large enough, we can split up $T$ into two subtrees $T_1$, $T_2$ such that
$$
|T_{1}|, |T_{2}| \ge |T|(1-\rho^{-1}).
$$
As a consequence,
$$
|T_1|, |T_2| \ge \rho^{r-1}/d,
$$
and also
$$
| T_{1}| \leq |T| - |T_2| \leq |T|\left( 1-(1-\rho^{-1}) \right) \le \rho^r,
$$
with the same inequality for $T_2$. Corollary~\ref{c:coupling} tells us that for $r$ large enough,
$$
E[\uptau(T)] \ge \frac{1}{|T|^{9}} E[\uptau(T_1)] \ E[\uptau(T_2)],
$$
that is to say,
$$
\log E[\uptau(T)] \ge \log E[\uptau(T_1)] + \log E[\uptau(T_2)] - \log |T|^{9}.
$$
Observing that
$$
\log E[\uptau(T_1)] + \log E[\uptau(T_2)] \ge V_r (|T_1| + |T_2|) = V_r |T|,
$$
we arrive at
\begin{equation}
\label{superadd}
\frac{\log E[\uptau(T)]}{|T|} \ge V_r - \frac{\log |T|^{9}}{|T|}.
\end{equation}
Part (1) of Proposition~\ref{p:expalpha} ensures that for $r$ large enough, one has
\begin{equation}
\label{positivvr}
V_r \ge \frac{c}{\rho^{r(1-\alpha)}}
\end{equation}
for some constant $c > 0$.
Recalling that $|T| \le \rho^{r+1}$, we thus have
$$
\frac{\log |T|^{9}}{|T|} \le \frac{V_r}{\rho^{r\alpha/2}},
$$
and \eqref{superadd} turns into
$$
\frac{\log E[\uptau(T)]}{|T|} \ge V_r\left(1-\frac{1}{\rho^{r \alpha/2}}\right),
$$
for any large enough $r$ and any tree whose size lies in $(\rho^r,\rho^{r+1}]$. If the size of the tree lies in $(\rho^r/d,\rho^r]$, then the inequality
$$
\frac{\log E[\uptau(T)]}{|T|} \ge V_r
$$
is obvious, so we arrive at
$$
V_{r+1} \ge V_r\left(1-\frac{1}{\rho^{r \alpha/2}}\right).
$$
Since $V_r > 0$ for any $r$ large enough by \eqref{positivvr}, and
$$
\prod_{r} \left(1-\frac{1}{\rho^{r \alpha/2}}\right) > 0,
$$
we have shown that $\liminf_{r \to \infty} V_r > 0$, and this finishes the proof.
\end{proof}

\begin{proof}[Proof of Theorem~\ref{thm3main3}]
Let $c > 0$ be given by Theorem~\ref{thm1main1}, and $T \in \Lambda(n,d)$. We learn from Lemma~\ref{l:attract} that
$$
P\Ll[ \uptau \le e^{cn/2} \Rr] \le \frac{e^{cn/2}}{E[\uptau]},
$$
which, by our choice of $c$, is smaller than $e^{-cn/4}$ for $n$ large enough, uniformly over $T \in \Lambda(n,d)$.
\end{proof}
%
%
%
%
%
%
%
\section{Discrete time growth process}
\label{s:discrete}
\setcounter{equation}{0}

For comparison purposes, it is sometimes useful to consider a discrete time analogue of the contact process; we will need to consider such a process in the next section. Though many different definitions may be proposed, we have decided on the following.

Fix $p \in (0,1)$ and let $\{I^r_{x,y}: r \in \{1, 2, \ldots\},\;x,y \in \Z,\; |x-y|\leq 1\}$ be a family of independent Bernoulli($p$) random variables. Fix $\eta_0 \in \{0,1\}^\Z$ and, for $r\geq 0$, let
$$\eta_{r+1}(x) = \mathds{1}\{\exists y: |x-y| \leq 1,\; \eta_r(y) = 1,\; I^r_{y,x} = 1\}.$$
The following is standard.

\begin{proposition}
The above process is attractive and there exists $p_{c}^{(1)} < 1$ so that for $p > p^{(1)}_{c}$ the process survives in the sense that, for any $\eta_0 \neq \underline{0}$,
$$P\left[\eta_r \neq \underline{0} \hspace{0.3cm} \forall r\right] > 0$$
and, if $\eta_0 = \underline{1}$, then $\eta_{r}$ decreases stochastically to a non zero limit.
\end{proposition}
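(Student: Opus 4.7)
The plan is to handle the three claims in turn, all of them built on the graphical coupling afforded by the common Bernoulli field $(I^r_{x,y})$. For \textbf{attractiveness}, I would realize every process $(\eta_r^A)$ for $A \in \{0,1\}^{\Z}$ on the same probability space driven by this field; the update rule $\eta_{r+1}(x) = \mathds{1}\{\exists y : |x-y|\le 1,\ \eta_r(y)=1,\ I^r_{y,x}=1\}$ is visibly monotone in $\eta_r$, so a one-line induction gives $\eta^A_r \subset \eta^B_r$ for all $r$ whenever $A \subset B$. The same graphical construction, realized jointly across parameters via independent $U(0,1)$ variables thresholded at $p$, also yields monotonicity of the survival probability in $p$; this makes the critical value $p_c^{(1)} = \inf\{p : P[\eta_r^{\{0\}} \neq \un{0}\ \forall r] > 0\}$ well defined as an element of $[0,1]$.

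For the bound \textbf{$p_c^{(1)} < 1$}, observe that the process is precisely oriented bond percolation on $\Z \times \Z_+$, where $(y,r)$ is joined to $(x,r+1)$ for $|x-y| \le 1$ with probability $p$. I would prove survival for $p$ close to $1$ by a Peierls/contour argument: the event that the space-time cluster of $(0,0)$ is finite forces the existence of a finite blocking set of closed edges separating it from infinity. The number of such contours of size $n$ grows only exponentially in $n$, while each carries probability at most $(1-p)^{n}$, so for $p$ large enough a Borel--Cantelli style bound gives positive survival probability. This is the one step with genuine content; equivalently, one can run a block renormalization producing a $1$-dependent oriented percolation with density arbitrarily close to $1$, and invoke the classical comparison theorems.

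For the \textbf{stochastic decrease from $\un{1}$ and the non-zero limit}, the starting observation is the trivial bound $\eta^{\un{1}}_1 \le \un{1}$. Running the dynamics from time $1$ onward under the time-shifted noise $(I^{r+1}_{\cdot,\cdot})_{r \ge 0}$ and applying the attractiveness coupling, one gets $\eta^{\un{1}}_{r+1} \le \zeta_r$ pathwise, where $\zeta_r$ is the process started from $\un{1}$ and driven by the shifted noise. Since $\zeta_r$ has the same law as $\eta^{\un{1}}_r$, the law of $\eta^{\un 1}_r$ is non-increasing in $r$ in the stochastic order, and compactness of $\{0,1\}^{\Z}$ produces a weak limit $\mu_\infty$. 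Finally, to see $\mu_\infty \neq \delta_{\un{0}}$ when $p > p_c^{(1)}$, I would invoke the time-reversal symmetry of the $I$-field, which gives the identity
$$
P\Ll[\eta^{\un{1}}_r(x) = 1\Rr] = P\Ll[\eta^{\{x\}}_r \neq \un{0}\Rr]
$$
(both events encode the existence of a length-$r$ space-time path between $\{x\}$ and $\Z$, and relabelling $(I^i_{y,z}) \mapsto (I^{r-1-i}_{z,y})$ preserves the joint distribution). The right-hand side is non-increasing in $r$ and tends to the survival probability from $\{x\}$, which is positive by the choice of $p$. Hence $\mu_\infty(\{\eta(x) = 1\}) > 0$ for every $x$, so the limit is non-trivial.
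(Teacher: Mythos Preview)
The paper does not actually prove this proposition; it is prefaced by ``The following is standard'' and no proof is given. Your proposal supplies a correct and essentially standard argument for each of the three claims: the monotonicity of the update map gives attractiveness, the identification with oriented bond percolation on $\Z\times\Z_+$ together with a contour argument gives $p_c^{(1)}<1$, and the combination of attractiveness with the self-duality identity $P[\eta^{\un 1}_r(x)=1]=P[\eta^{\{x\}}_r\neq\un 0]$ handles the stochastic monotonicity and non-triviality of the limit. This is exactly the kind of argument the phrase ``the following is standard'' is pointing to.
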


This process generalizes to locally finite graphs, just as does the contact process. In particular it will have the self duality property and we can easily follow through the arguments of the preceding sections to arrive at
\begin{proposition}
\label{discrete}
\noindent Let $d \geq 2$ and $p > p_c^{(1)}$. There exists $c>0$ such that
$$\inf_{T\in \Lambda(n,d)} \; P\left[\uptau_T \geq e^{cr}\right] \longrightarrow 1 \text{ as } n \to \infty.$$
\end{proposition}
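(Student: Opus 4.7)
The plan is to transcribe the program of Sections~\ref{s:remind}--\ref{s:comparison2} into the discrete-time setting and then apply the Markov-type argument used in the proof of Theorem~\ref{thm3main3}. The Bernoulli family $\{I^r_{x,y}\}$ furnishes an obvious graphical representation playing the role of the Harris system: one says that $(x,r) \leftrightarrow (y,s)$ when there is a sequence of successive arrows traversed by open variables. From this one reads off attractiveness and self-duality (both stated in Section~\ref{s:discrete}). The only external ingredient needed is the discrete-time analogue of Proposition~\ref{cpinterval}, namely that for $p > p_c^{(1)}$ one has a block comparison with supercritical oriented site percolation on $\Z^2$ yielding: (i) that starting from $\{0\}$ the process on $\Z_+$ hits $n$ by time $n/\bar c_1$ with probability at least $\bar c_1$; (ii) exponential tail for the probability that a nonempty configuration on $\{0,\ldots,n\}$ survives until time $n/\bar c_1$ without reaching an endpoint; (iii) $P[\xi_{n,t}^{\un 1} = \un 0] \le t e^{-\bar c_1 n}$. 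In discrete time the block construction is in fact cleaner than in the continuous case.

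Given this input, I would repeat word for word the proof of Proposition~\ref{p:expalpha}. The dichotomy on the diameter of $T \in \Lambda(n,d)$ is purely graph-theoretic; in case (A) one directly uses the interval estimates on a long embedded segment; in case (B) one iterates Lemma~\ref{lem1} some $\beta \log n$ times to extract $L_n = 2^{\beta \log n}$ disjoint subtrees each containing a path $I_j$ of length $\frac{\log n}{4\log d}$, defines ``good at time $k$'' in terms of the discrete-time coupling probability $p_i(\eta_k)$ over a time-window of $\lfloor K n^\alpha\rfloor$ steps, and compares the number of good intervals to the random walk of Lemma~\ref{l:rw}. Nothing in Section~\ref{s:metastab} uses continuous time intrinsically; the parameter $K n^\alpha$ simply becomes a number of discrete steps, and the discrete Markov property is if anything simpler.

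With the discrete version of Proposition~\ref{p:expalpha} in hand, I would construct a discrete-time Phoenix process on a tree $T$ exactly as in Section~\ref{s:comparison2}: run from $\un 1$ until extinction, remain at $\un 0$ for the next $n^4$ steps, then (conditional on trustworthiness of the translated graphical system on that window of $n^4$ steps, and on an independent fair coin) restart from $\xi_{t+n^4}^{\un 1, t}$. Trustworthiness is defined by the same conditions (C$_1$)--(C$_2$), and Lemma~\ref{l:trust} follows from the discrete Proposition~\ref{p:expalpha} in the same way. Lemmas~\ref{l:attract}, \ref{l:prob0}, \ref{transmission}, Proposition~\ref{p:coupling} and Corollary~\ref{c:coupling} then all transcribe verbatim: the submultiplicativity bound $P[\uptau \ge rs] \le P[\uptau \ge s]^r$ uses only attractiveness; Lemma~\ref{transmission} uses only part~(1) of Proposition~\ref{cpinterval} and part~(1) of Proposition~\ref{p:expalpha}; and the splitting via Lemma~\ref{lem1} is a statement about trees. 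The outcome is the discrete analogue of Corollary~\ref{c:coupling}, $E[\uptau_T] \ge n^{-9} E[\uptau_{T_1}] E[\uptau_{T_2}]$.

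The super-multiplicative induction on $|T|$ in the proof of Theorem~\ref{thm1main1} now yields a constant $c > 0$ with $E[\uptau_T] \ge e^{cn}$ uniformly over $T \in \Lambda(n,d)$, and the attractiveness inequality of Lemma~\ref{l:attract} then gives $P[\uptau_T \le e^{cn/2}] \le e^{cn/2}/E[\uptau_T] \le e^{-cn/4}$, which is exactly the statement of Proposition~\ref{discrete} (with $r$ replaced by $n$). The only step I expect to require any real work is verifying the three estimates of Proposition~\ref{cpinterval} in discrete time via a block construction; once that is granted, every subsequent argument is purely structural, resting on attractiveness, self-duality, and the interval estimates, and uses no feature specific to continuous time.
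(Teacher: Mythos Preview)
Your proposal is correct and matches the paper's own approach exactly: the paper does not give a separate proof of Proposition~\ref{discrete} but simply asserts that ``we can easily follow through the arguments of the preceding sections'' using attractiveness and self-duality of the discrete-time model, and your write-up is a faithful expansion of precisely that sentence. Your identification of the discrete analogue of Proposition~\ref{cpinterval} as the only place requiring independent verification, and your observation that $r$ in the displayed statement should read $n$, are both on point.
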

\noindent(again $\uptau_T$ is the extinction time for the process on $T$ started from full occupancy).

\vspace{0.4cm}

%
%
%
%
%
%
%
\section{Extinction time on Newman-Strogatz-Watts random graphs}
\label{s:nsw}

Let us briefly recall the definition of the NSW random graph on $n$ vertices, $G^n = (V^n, E^n)$. We take $V^n = \{1, 2, \ldots, n\}$ and suppose given a probability $p(\cdot) $ on the positive integers greater than or equal to $3$ with the property that, for some $a > 2$ and $c_0 > 0,\; \ p(m) \sim \frac{c_0} {m^a}$.  The NSW graph $G^n$ is then generated by choosing the degrees for the $n$ vertices $d_1, d_2,  \ldots, d_n$, according to i.i.d. random variables of law $p(\cdot)$ conditioned on $\sum_{x=1}^n d_x$ being even.  Given this realization, we choose the edges by first giving each vertex $x\; d_x$ half-edges and then matching up the half-edges uniformly among all possible matchings, so that, say, a half-edge for vertex $x$ matched with a half-edge of vertex $y$ becomes an edge between $x$ and $y$.  Of course, loops and parallel edges may occur (though as noted in \cite{CD}, if $a > 3 $ the probability of nonexistence of both is bounded away from zero).

In this section we consider the contact process with small parameter $\lambda > 0$ on NSW random graphs and prove Theorem \ref{thm1cd1}. As mentioned in the Introduction, we will assume that $a > 3$.

Instead of choosing a matching for all half-edges at once, we can also match them in a sequence of steps, so that, in each step, we are free to choose one of the half-edges involved in the matching, and the other is chosen at random. To be more precise, let us introduce some terminology. A \textit{semi-graph} $g = (V^n, \mathcal{H},\mathcal{E})$ is a triple consisting of the set of vertices $V^n$, a set of half-edges $\mathcal{H}$ and a set of edges $\mathcal{E}$ (of course, if $\mathcal{H} = \varnothing$, then $g$ is a graph). The degree of a vertex in a semi-graph is the number of its half-edges plus the number of edges that are incident to it. Given two half-edges $h, h' \in \mathcal{H}$, we will denote by $h+h'$ a new edge produced by ``attaching'' $h$ and $h'$. We will now inductively define a finite sequence of semi-graphs $g_0, g_1, \ldots, g_k$ so that $g_k$ has the distribution of a NSW graph. $g_0 = (V^n, \mathcal{H}_0, \mathcal{E}_0)$ is defined with $\mathcal{E}_0 = \varnothing$ and so that each vertex $x$ has $d_x$ half-edges, where $(d_1,\ldots,d_n)$ is chosen at random as described in the previous paragraph. Assume $g_i = (V^n, \mathcal{H}_i, \mathcal{E}_i)$ is defined and has half-edges. Fix an arbitrary half-edge $h \in \mathcal{H}_i$ and randomly choose another half-edge $h'$ uniformly in $\mathcal{H}_i -\{h\}$. Then put $g_{i+1} = (V^n, \mathcal{H}_{i+1}, \mathcal{E}_{i+1})$, where $\mathcal{H}_{i+1} = \mathcal{H}_i - \{h, h'\}$ and $\mathcal{E}_{i+1} = \mathcal{E}_i \cup \{h+h'\}$. When no half-edges are left, we are done, and the graph thus obtained is a NSW random graph. Often, instead of updating the sets each time, say from $\mathcal{H}_i, \mathcal{E}_i$ to $\mathcal{H}_{i+1}, \mathcal{E}_{i+1}$ as above, we will hold the notation $g = (V^n, \mathcal{H}, \mathcal{E})$ and say (for example) that $h, h'$ are deleted from $\mathcal{H}$ and $h+h'$ is added to $\mathcal{E}$.

We will be particularly interested in vertices with degree in $\left[ S,\; 2S\right],$
where $S = M \frac{1}{\lambda^2}\log^{2} \left(\frac{1}{\lambda}\right)$ and $M$ is a large universal constant to be chosen later. We designate by $I$ the vertices of $V^n$ whose degree lies in this set.

In order to prove Theorem \ref{thm1cd1}, by attractiveness of the contact process, it is sufficient to show that given $G^n$, (with high probability as $n$ tends to infinity) there exists a subgraph on which the contact process survives for the desired amount of time. The plan is to show that for some $\delta > 0$ (that depends on $\lambda$), with high probability as $n \rightarrow \infty$, we can find a subgraph $G^{n \prime} = (V^{n\prime}, E^{n\prime})$ of $G^n$ that is a tree with certain good properties and with vertex set containing $\delta n $ vertices of $I$. Let $I'$ be the set of vertices of $V^{n\prime}$ of degree (with respect to $E^{n\prime}$) in the set $[S, 2S]$. Let us say that for $x, y \in I', \ x \stackrel{*}{\sim} y$ if $x$ and $y$ are connected to each other in $G^{n \prime}$ by a path which, apart from $x$ and $y$, contains no elements of $I'$ and which is of length less than $20a \log\left(\frac{1}{\lambda }\right)$. We wish to compare the contact process on $G^{n \prime }$ to a discrete time growth process (as in Section 4) on a tree $T$ with vertex set $I'$ and edge set $\{\{x,y\}: x \stackrel{*}{\sim} y\}$. We wish to have \vskip 3mm

\noindent \textit{Property A:} $T$ is a tree of degree bounded by 4.\vskip 3mm

\noindent \textit{Property B:} every element of $I'$ has $\frac{S}{4}$ neighbors of degree 1.\vskip 3mm

Property B ensures that around each site in $I$, the infection persists for a long time. This guarantees that our discrete time growth process has infection rate as large as desired. Together with Property A, this allows us to apply Proposition \ref{discrete} to the growth process and conclude that its extinction time is very large. We then conclude that the extinction time of the contact process on $G^{n\prime}$ is also very large.

We will find the subgraph $G^{n\prime}$ with the aid of an algorithm whose starting point will be the semi-graph $g_0$ defined above. Before we present the algorithm let us make some remarks about the random degree sequence $d_1, \ldots, d_n$.

Let $\mu = \sum_{m=1}^\infty m\cdot p(m)$. Let us remark that, if the degrees are given by $d_1, \ldots, d_n$ and we choose a half-edge uniformly at random in $g_0$, then the probability that the corresponding vertex has degree $m$ is $$\frac{m \cdot |x:d_x = m|}{\sum_x d_x} \to \frac{m \cdot p(m)}{\mu}\text{ as } n \to \infty.$$ The probability $q(m) = m\cdot p(m)/\mu$ is called the \textit{size biased distribution}. By our assumption that $p(m) \sim \frac{c_0}{m^a}$, it follows that $q(m) \sim \frac{c_1}{m^{a-1}}$, where $c_1 = \frac{c_0}{\mu}$. If $x$ is large enough, it can be easily verified by comparison with an integral that
$$\frac{c_1}{2(a-2)}\cdot x^{-(a-2)} < q([x, 2x]) < \frac{2 c_1}{a-2}\cdot x^{-(a-2)}.$$
We will also need the following facts, whose proofs are omitted.

\begin{lemma}
\label{lem5degs}
For any small enough $\lambda > 0$, there exists $\epsilon > 0$ such that, with probability tending to 1 as $n \to \infty$, for any $A \subset V^n$ with $|A| \leq \epsilon n$ we have\medskip\\
$(i.)\;\displaystyle{\frac{c_0}{2(a-1)S^{a-1}} < \frac{|I\cap A^c|}{n} < \frac{2c_0}{(a-1)S^{a-1}} };$\medskip\\
$(ii.)\;\displaystyle{\frac{\sum_{x \in A}\;d_x}{\sum_{x \in V^n}\; d_x} < \frac{1}{8}};$\medskip\\
$(iii.)\;\displaystyle{\frac{c_1}{2(a-2)S^{a-2}}<\frac{\sum_{x\in I \cap A^c}\; d_x}{\sum_{x \in V^n}\; d_x} < \frac{2c_1}{(a-2)S^{a-2}};}$\medskip\\
$(iv.)\;\displaystyle{\frac{\mu}{2}<\frac{\sum_{x \in A^c} \;d_x}{n} < 2\mu}.$
\end{lemma}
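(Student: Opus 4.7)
The proof combines the strong law of large numbers applied to the i.i.d.\ sequence $(d_1,\ldots,d_n)$ with a uniform-in-$A$ control over sets of size at most $\epsilon n$. The conditioning on $\sum_x d_x$ being even has probability bounded away from zero, so it does not affect any high-probability statement. Throughout $S$ is a fixed constant (depending only on $\lambda$), and $\epsilon$ will be chosen at the end as the minimum of finitely many $\lambda$-dependent thresholds. The key integral comparisons, valid once $S$ is large, are
\[
p([S,2S])\;\sim\;\frac{c_0\bigl(1-2^{-(a-1)}\bigr)}{a-1}\,S^{-(a-1)},\qquad \sum_{m=S}^{2S}m\,p(m)\;\sim\;\frac{c_0\bigl(1-2^{-(a-2)}\bigr)}{a-2}\,S^{-(a-2)},
\]
and since $a>3$, both prefactors lie in a fixed sub-interval of $(0,1)$, which leaves ample slack for the target bounds.

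For part (i) and for the global denominators in (iii) and (iv), the strong law gives $|I|/n\to p([S,2S])$ and $\sum_x d_x/n\to\mu$. Since $\bigl||I|-|I\cap A^c|\bigr|\le|A|\le\epsilon n$, choosing $\epsilon$ small compared to $S^{-(a-1)}$ turns the integral estimate above into part (i). Part (iv) reduces to a uniform upper bound on $\sum_{x\in A}d_x$, which is the subject of the next paragraph.

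The main obstacle is controlling $\sup_{|A|\le\epsilon n}\sum_{x\in A}d_x$, which equals the sum of the $\lfloor\epsilon n\rfloor$ largest degrees. I would proceed by truncation: for any $K>0$,
\[
\sup_{|A|\le\epsilon n}\sum_{x\in A}d_x\;\le\;K\epsilon n\;+\;\sum_{x:\,d_x>K}d_x.
\]
Since $a>3$ ensures $\E[d]<\infty$, dominated convergence gives $\E\bigl[d\,\mathds{1}\{d>K\}\bigr]\to 0$ as $K\to\infty$; the strong law then yields $\tfrac{1}{n}\sum_{x:d_x>K}d_x\to\E\bigl[d\,\mathds{1}\{d>K\}\bigr]$. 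Choosing $K$ so that this tail expectation lies below $\mu/32$, and then $\epsilon<\mu/(32K)$, gives $\sup_{|A|\le\epsilon n}\sum_{x\in A}d_x\le\mu n/16$ with probability tending to $1$. Dividing by $\sum_x d_x\sim\mu n$ proves (ii), and (iv) follows immediately from the two-sided comparison $\sum_{x\in A^c}d_x=\sum_x d_x-\sum_{x\in A}d_x$.

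For (iii) one exploits the much sharper bound $\sum_{x\in I\cap A}d_x\le 2S\,|A|\le 2S\epsilon n$, available because every vertex of $I$ has degree at most $2S$. By the strong law, $\tfrac{1}{n}\sum_{x\in I}d_x\to\sum_{m=S}^{2S}m\,p(m)$, whose order is $c_0\,S^{-(a-2)}/(a-2)$ by the integral estimate above. Taking $\epsilon\le c_0/\bigl(8(a-2)S^{a-1}\bigr)$ makes $2S\epsilon n$ negligible compared with this main term; dividing by $\sum_x d_x\sim\mu n$ and using $c_1=c_0/\mu$ then yields (iii). Setting $\epsilon$ to be the minimum of the $\lambda$-dependent thresholds imposed at each step completes the proof.
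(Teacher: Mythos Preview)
Your proof sketch is correct. The paper itself omits the proof of this lemma (it simply states ``whose proofs are omitted''), so there is nothing to compare against, but your argument is exactly the kind of routine verification the authors had in mind: the law of large numbers pins down $|I|/n$, $\sum_{x\in I}d_x/n$, and $\sum_x d_x/n$ at their expected values, and the uniformity over $|A|\le\epsilon n$ is handled by your truncation bound $\sup_{|A|\le\epsilon n}\sum_{x\in A}d_x\le K\epsilon n+\sum_{x:d_x>K}d_x$ together with the trivial bound $\sum_{x\in I\cap A}d_x\le 2S\epsilon n$. The integral comparisons you wrote down match the ones the paper records for the size-biased distribution just before the lemma, and the slack provided by the prefactors $1-2^{-(a-1)}$ and $1-2^{-(a-2)}$ (both in $(1/2,1)$ since $a>3$) is precisely what makes the stated two-sided inequalities hold once $\lambda$ is small enough that the asymptotic $p(m)\sim c_0 m^{-a}$ is accurate on $[S,2S]$.
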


The hypothesis that $\lambda$ is small is not problematic to us because clearly it is sufficient to prove Theorem \ref{thm1cd1} for $\lambda$ small enough. In what follows, $\lambda$ is fixed and $\epsilon$ is taken corresponding to $\lambda$ as in the lemma. We will often assume that $\lambda$ is small enough, and also that $n$ is large enough, for other desired properties to hold.  We will say that a degree sequence $d_1, \ldots, d_n$ is \textit{robust} if it satisfies $(i.), (ii.), (iii.)$ and $(iv.)$. Our algorithm proceeds by matching half-edges, as described in the beginning of this section. We will thus have to deal with the set of half-edges after some matchings have been made, and that is why the robustness property will come into play.

Other than match half-edges, the algorithm also writes labels on edges and vertices. Edges are labeled either $\mathsf{marked}$ or $\mathsf{unmarked}$; the former are included in $G^{n\prime}$ and the latter are not. Vertex labels serve to guide the order of the matchings. The possible vertex labels are: $\mathsf{unidentified}$, $\mathsf{preactive}$, $\mathsf{active}$ and $\mathsf{read}$. An $\mathsf{unidentified}$ vertex is one that has not yet been ``seen'' by the algorithm, that is, none of its half-edges has been matched yet. If a vertex has any label different from $\mathsf{unidentified}$, then it is said to be identified. The labels $\mathsf{preactive}$ and $\mathsf{active}$ can only be associated to vertices in $I$, and at most one vertex will be $\mathsf{active}$ at a given time.

The algorithm repeatedly follows a subroutine called a \textit{pass}. Between two passes, there will be no $\mathsf{active}$ vertices. When a new pass starts, it typically takes a $\mathsf{preactive}$ vertex $\bar x$, turns it into $\mathsf{active}$ and successively explores the graph around $\bar x$ (by performing matchings) until certain conditions are satisfied; then, it labels every vertex that was touched as $\mathsf{read}$ except for the vertices of $I$ that were found; these are labeled $\mathsf{preactive}$ and are activated by future passes. 

A labeled semi-graph $g=(V^n, \mathcal{H}, \mathcal{E}, \{\ell_x\}_{x\in V^n},\{\ell_e\}_{e\in \mathcal{E}}, \prec)$ is a semi-graph with a label $\ell_x$ attached to each vertex $x$, a label $\ell_e$ associated to each edge $e$ and a total order $\prec$ on the set of $\mathsf{preactive}$ vertices. It is worth remarking that since a pass only does matchings and relabeling, it does not change the degree of any vertex. In particular, the definition of the set $I$ does not change. Let us now define the pass. Obviously, whenever there is an instruction to give a vertex a label, this label replaces the former label of that vertex. \vskip 2mm

\noindent\rule[0.5ex]{\linewidth}{1pt}
\textbf{The pass\\ Input:} $g = (V^n, \mathcal{H}, \mathcal{E}, \{\ell_x\},\{\ell_e\}, \prec)$ with at least one vertex of $I$ $\mathsf{preactive}$ or $\mathsf{unidentified}$.
\smallskip \\
\begin{tabular}{p{0.45cm} p{11.6cm}}
\textbf{(S1)}& Let $\bar x$ be the $\mathsf{preactive}$ vertex of highest order; if there are no $\mathsf{preactive}$ vertices, let $\bar x$ be an arbitrary $\mathsf{unidentified}$ site of $I$.\\
&$\bullet\;$ If $\bar x$ has less than $\frac{S}{2}$ half-edges (which can only happen if it is $\mathsf{preactive}$), label it $\mathsf{read}$; the pass is then stopped in status $\mathsf{B}_1$.\\&$\bullet\;$ Otherwise, label $\bar x$ $\mathsf{active}$ and proceed to (S2).\end{tabular}\smallskip \\
\begin{tabular}{p{0.45cm} p{11.6cm}}
\textbf{(S2)}& Define the set $\mathcal{H}^*$ of \textit{relevant half-edges} of the pass as the set of half-edges attached to the $\mathsf{active}$ vertex. Endow $\mathcal{H}^*$ with a total order $\prec^*$ chosen arbitrarily. Also let $\bar C = 0$; this will be a counting variable whose value will be progressively incremented. Proceed to (S3).\end{tabular}\smallskip\\
\begin{tabular}{p{0.45cm} p{11.6cm}}
\textbf{(S3)} &Let $h$ be the half-edge of highest order in $\mathcal{H}^*$. Choose another half-edge $h'$ uniformly at random in $\mathcal{H} - \{h\}$ and let $v'$ be the vertex of $h'$. Delete $h, h'$ from all sets that contain them ($h$ from $\mathcal{H}$ and $\mathcal{H}^*$, $h'$ from $\mathcal{H}$ and possibly $\mathcal{H}^*$) and add $h+h'$ to $\mathcal{E}$; its label is given as follows:\\
&$\bullet\;$ If $v'$ is identified, label $h+h'$ $\mathsf{unmarked}$.\\
&$\bullet\;$ If $v'$ is $\mathsf{unidentified}$ and not in $I$, label $h+h'$ $\mathsf{marked}$. Also label $v'$ $\mathsf{read}$ and add its half-edges to $\mathcal{H}^*$ (note that at this point $h'$ is no longer a half-edge of $v'$) so that they have arbitrary order among themselves but lower order than all half-edges previously in $\mathcal{H}^*$. \\
&$\bullet\;$ If $v'$ is $\mathsf{unidentified}$ and in $I$, and if $\bar C < 3$, label $h+h'$ $\mathsf{marked}$, label $v'$ $\mathsf{preactive}$, assign it the lowest order in the set of $\mathsf{preactive}$ vertices and add 1 to $\bar C$.\\
&$\bullet\;$ If $v'$ is $\mathsf{unidentified}$ and in $I$, and if $\bar C \geq 3$, label $h+h'$ $\mathsf{unmarked}$ and label $v'$ $\mathsf{read}$.\\
&Proceed to (S4).\end{tabular}\smallskip\\
\begin{tabular}{p{0.45cm} p{11.6cm}}
\textbf{(S4)} &$\bullet\;$ If $\bar x$ still has half-edges, go to (S3).\\&$\bullet\;$ If the last half-edge of $\bar x$ has been deleted in the previous step and now there are less than $\frac{S}{4}$ $\mathsf{marked}$ edges incident to $\bar x$, label $\bar x$ and all vertices that have been identified in the pass (including the $\mathsf{preactive}$ ones) $\mathsf{read}$. The pass is stopped in status $\mathsf{B}_2$.\\&$\bullet\;$ Otherwise go to (S5).\end{tabular}
\smallskip\\
\begin{tabular}{p{0.45cm} p{11.6cm}}
\textbf{(S5)} &$\bullet\;$ If $\bar C \geq 3$, label $\bar x$ $\mathsf{read}$ and end the pass in status $\mathsf{G}$.\\&$\bullet\;$ Otherwise go to (S6).
\end{tabular}\smallskip\\
\begin{tabular}{p{0.45cm} p{11.6cm}}
\textbf{(S6)} &$\bullet\;$ If (a) more than $\left(\frac{1}{\lambda} \right)^{2a-3}$ vertices have been identified in the pass, or (b) a path of length  $20a\log\left(\frac{1}{\lambda}\right)$ may be formed with $\mathsf{marked}$ edges constructed in the pass, or (c) $\mathcal{H}^*$ is empty, then label $\bar x$ $\mathsf{read}$ and end the pass in status $\mathsf{B}_3$.
\\&$\bullet\;$ Otherwise go to (S3).\end{tabular}
\smallskip\\
\textbf{Output:} updated labeled semi-graph, status.\\
\noindent\rule[0.5ex]{\linewidth}{1pt}

Let us explain in words what happens when a pass ends in status $\mathsf{G}$. It first activates the $\mathsf{preactive}$ vertex $\bar x$ of highest order, then starts identifying the neighbors of $\bar x$; when they are all identified, it starts identifying the vertices at distance 2 from $\bar x$, and so on, until it has found three new vertices of $I$, at which point it stops. The ``bad'' outcomes $\mathsf{B}_1$,$\mathsf{B}_2$ and $\mathsf{B}_3$ are included to guarantee that $G^{n\prime}$ has the desired properties mentioned earlier and that the algorithm can successfully continue. $\mathsf{B}_1$ and $\mathsf{B}_2$ are necessary to ensure that the vertices of $G^{n\prime}$ that will be the focal points for the comparison growth process all have large degree. $\mathsf{B}_3$ is necessary to ensure that the focal points are not very far from each other and also that the pass does not delete too many half-edges, thus exploring too much of the graph.

We wish the pass to return the status $\mathsf{G}$ ; the following lemma addresses this.

\begin{lemma}
\label{lem5goodpass}
Assume that the degree sequence is robust, $g$ has less than $\frac{\epsilon}{2} n$ identified vertices before the pass starts and, when the pass defines $\bar x$, this vertex has more than $\frac{S}{2}$ half-edges. Then, the pass ends in status $\mathsf{G}$ with probability larger than $\frac{9}{10}$.
\end{lemma}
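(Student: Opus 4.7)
The pass terminates in exactly one of the four statuses $\mathsf{G}, \mathsf{B}_1, \mathsf{B}_2, \mathsf{B}_3$, and the hypothesis that $\bar x$ has more than $S/2$ half-edges immediately rules out $\mathsf{B}_1$. My plan is therefore to bound the probabilities of $\mathsf{B}_2$ and $\mathsf{B}_3$ separately, showing that their sum is less than $1/10$ for $\lambda$ sufficiently small (and $n$ large).

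For $\mathsf{B}_2$, I would analyze the initial phase of the pass, during which the (at least) $S/2$ half-edges of $\bar x$ are matched sequentially. Throughout this phase the set of identified vertices has cardinality at most $\epsilon n/2 + S/2 \le \epsilon n$ for $n$ large, so part~(ii) of Lemma~\ref{lem5degs} applies and each matching hits an unidentified vertex with probability at least $7/8$. All but at most three of these matchings then yield a $\mathsf{marked}$ edge incident to $\bar x$ (the exception being matches to an $I$-vertex after $\bar C$ has already hit $3$), so a Chernoff bound on the number of matchings landing in unidentified vertices gives $P(\mathsf{B}_2) \le e^{-cS}$, which is comfortably less than $1/20$ once $\lambda$ is small.

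For $\mathsf{B}_3$, I view the phase from (S6) onward as a breadth-first exploration rooted at $\bar x$: we only continue through non-$I$ vertices and halt upon the third $I$-hit, unless one of (a)--(c) triggers first. Using parts (ii)--(iii) of Lemma~\ref{lem5degs}, together with the fact that we abort as soon as more than $(1/\lambda)^{2a-3}$ vertices have been identified, each matching hits a fresh $I$-vertex with probability at least $p_I := c_1/(4(a-2) S^{a-2})$; hence the total number of matchings needed for three $I$-hits is $O(S^{a-2})$ with high probability. Since $S^{a-2} \ll \lambda^{-(2a-3)}$ for $\lambda$ small, condition (a) is avoided. For conditions (b) and (c), I would couple the exploration with a Galton--Watson branching process whose offspring law is the size-biased degree distribution shifted by $-1$; because vertex degrees are at least $3$ and $a > 3$ ensures a finite second moment, the offspring mean $\nu - 1$ exceeds $1$, so $|\mathcal{H}^*|$ grows geometrically and $\Theta(S^{a-2})$ matchings are reached at depth $O(\log(1/\lambda))$, safely below the cut-off $20a \log(1/\lambda)$.

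The hard part will be making the branching-process coupling precise enough to simultaneously rule out all three $\mathsf{B}_3$ conditions, despite the mild dependencies caused by matching half-edges from a finite (though large) pool of size $\Theta(n)$, and the need to control tail contributions from vertices of very high degree. Once each of $P(\mathsf{B}_2)$, $P(\mathsf{B}_3(a))$, $P(\mathsf{B}_3(b))$, $P(\mathsf{B}_3(c))$ is shown to be at most $1/40$ in the small-$\lambda$, large-$n$ regime, summing these estimates yields $P(\mathsf{G}) \ge 9/10$, proving the lemma.
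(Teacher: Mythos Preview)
Your overall plan---rule out $\mathsf{B}_1$ by hypothesis and bound each of $\mathsf{B}_2$, $\mathsf{B}_3(a)$, $\mathsf{B}_3(b)$, $\mathsf{B}_3(c)$ by $1/40$---is exactly the paper's strategy, and your treatments of $\mathsf{B}_2$ and $\mathsf{B}_3(a)$ are essentially the same as the paper's. One minor slip: the number of matchings from $\bar x$ that hit an unidentified $I$-vertex after $\bar C=3$ need not be at most three; this is however negligible since $I$-hits have probability $O(S^{-(a-2)})$, and the paper's own argument glosses over the same point.

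The substantive difference is in handling $\mathsf{B}_3(b)$ and $\mathsf{B}_3(c)$. You propose a Galton--Watson coupling using the full size-biased offspring law, and you correctly anticipate that making this rigorous requires controlling dependencies and heavy tails (note also that for $3<a\le 4$ the offspring law has finite mean but infinite variance, which complicates concentration). The paper bypasses all of this with a more elementary argument: since every vertex has degree at least $3$, each newly identified non-$I$ vertex contributes at least two fresh half-edges to $\mathcal{H}^*$; combined with the fact that each matching lands on a fresh non-$I$ vertex with probability at least $3/4$, a level-by-level binomial bound shows that the set $s_k$ of non-$I$ vertices at BFS depth $k$ satisfies $|s_{k+1}|\ge \tfrac{5}{4}|s_k|$ with high probability. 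After $K \approx \tfrac{2a}{\log(5/4)}\log(1/\lambda) < 20a\log(1/\lambda)$ levels the front has size of order $S\lambda^{-2a}$, at which point three $I$-hits are essentially guaranteed. Your branching-process route would ultimately work, but the paper's approach needs only the minimum-degree assumption and a sequence of binomial tail bounds, so it is both shorter and avoids the moment and coupling issues you flag as the ``hard part''.
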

\begin{proof}
Start noticing that the pass identifies at most $(1/\lambda)^{2a-3}$ vertices and this is much less than $\frac{\epsilon}{2}n$ if $n$ is large. So, at a moment immediately before the pass chooses a half-edge at random, there are less than $\epsilon n$ identified vertices; let $A$ in the definition of robustness be this set of identified vertices. The chosen half-edge then has probability:\\
(1) larger than $\frac{7}{8}$ of belonging to an $\mathsf{unidentified}$ vertex;\\
(2) larger than $\frac{c_1}{2(a-2)S^{a-2}}$ of belonging to an $\mathsf{unidentified}$ vertex that is in $I$;\\
(3) larger than $\frac{3}{4}$ of belonging to an $\mathsf{unidentified}$ vertex that is not in $I$.

By hypothesis, the pass does not end in status $\mathsf{B}_1$. For it to end in status $\mathsf{B}_2$, at least half of the more than $\frac{S}{2}$ half-edges initially present in $\mathcal{H}^*$ must be matched to half-edges of previously identified vertices. By (1) this has probability less than $P[\mathsf{Bin}(S/2,1/8)>S/4]$, which is less than $\frac{1}{40}$ if $\lambda$ is small (and hence $S$ is large). Likewise, we can show using (2) that the probability of the pass ending because of case (a) in (S6) is less than $\frac{1}{40}$.

Let us now show that the same holds for (b) in (S6). For $k \geq 1$, let $s_k$ be the set of vertices at distance $k$ from $\bar x$ that are not in $I$ and that the pass identifies; also let $s_0 = \{\bar x\}$. Since every vertex has degree 3 or more, there will be at least $2|s_k|$ half-edges of vertices of $s_k$ for the pass to match (unless it halts before). Define the event $$A_k =\left\{\begin{array}{c} \text{the pass deletes all half-edges of vertices of $s_k$; of these, }\\\text{less than $\frac{5}{8}$ are matched to half-edges of vertices not in I}\\\text{ that were (at the time of matching) $\mathsf{unidentified}$} \end{array}\right\},\quad k \geq 0.$$
We have $\P[A_0] \leq P\left[\mathsf{Bin}(S/2, 3/4) < (S/2) \cdot (5/8)\right].$ Given that $A_1, \ldots, A_k$ have not occurred and the pass reaches distance $k+1$ from $\bar x$, the probability of $A_{k+1}$ is less than
$$P\left[\mathsf{Bin}\left(\frac{S}{2}\left(\frac{5}{8}\right)^{k+1} 2^{k},\; \frac{3}{4}\right) <  \left(\frac{S}{2}\left(\frac{5}{8}\right)^{k+1} 2^{k} \right) \cdot \frac{5}{8}\right].$$
Letting $K = \frac{2a}{\log(5/4)}\log\left(\frac{1}{\lambda}\right) < 20a\log\left(\frac{1}{\lambda}\right)$, the above estimates show that $\displaystyle{\P\left[\cup_{k=0}^K\; A_k\right]}$ vanishes as $\lambda \to 0$. Now, assume that $A_0, \ldots, A_K$ have not occurred and the pass reaches level $K+1$. The probability that less than 3 $\mathsf{unidentified}$ vertices of $I$ are discovered in the matching of half-edges from $s_{K+1}$ is then less than
\begin{equation}P\left[\mathsf{Bin}\left(\frac{S}{2}\left(\frac{5}{8}\right)^{K+1} 2^{K},\; \frac{c_1}{2(a-2)S^{a-2}} \right) < 3\right].\label{eqn5Bin3}\end{equation}
Note that
$$\frac{S}{2}\left(\frac{5}{8}\right)^{K+1} 2^{K} \geq \frac{S}{4} \left(\frac{1}{\lambda}\right)^{\frac{2a}{\log(5/4)}\cdot \log(5/4)} = \frac{S}{4}\left(\frac{1}{\lambda}\right)^{2a}$$
and
$$\frac{c_1}{2(a-2)S^{a-2}} = \frac{c_1 \lambda^{2(a-2)}}{M^{a-2}\log^{2(a-2)}(1/\lambda)},$$
so the probability in (\ref{eqn5Bin3}) is very small if $\lambda$ is small. Putting these facts together, we get the desired result.

The probability of (c) in (S6) occurring is similarly shown to be less than $\frac{1}{40}$, and this concludes the proof.
\end{proof}

From now on, we will assume that the degree sequence is robust. With the definition of the pass at hand, we are now ready to explain the full algorithm. From the degree sequence $d_1, \ldots, d_n$, we construct our initial labeled semi-graph $g$ containing no edges and so that each vertex $x$ is $\mathsf{unidentified}$ and has $d_i$ half-edges. We then run $\epsilon'n$ successive passes, where $\epsilon'=\frac{\epsilon \lambda^{2a-3}}{2}$. Since each pass identifies at most $\lambda^{-(2a-3)}$ vertices, we see that at the beginning of each pass, less than $\frac{\epsilon}{2}n$ vertices will be identified, so the hypotheses of Lemma \ref{lem5goodpass} will hold. Also let $\delta' = \frac{\epsilon'}{2}$

For $1 \leq i < \epsilon' n$, define
$$\begin{aligned}
&W_i = \text{Number of $\mathsf{preactive}$ vertices before pass $i$},\\
&X_i = W_{i+1} - W_{i},\\
&Y_i = \mathds{1}_{\{\text{Pass $i$ ends in status $\mathsf{B}_1$}\}}.
\end{aligned}$$
The possible values for $X_i$ are $-1, 0, 1, 2$. If $Y_i = 1$, then $X_i = -1$. By the previous lemma, for any $x_1, \ldots, x_{i-1}, y_1, \ldots, y_{i-1}$ we have
\begin{equation}\label{eqn5Xaflem}\P\left[\;X_i = 2 \;|\;\{X_j\}_{j=1}^{i-1} = \{x_j\}_{j=1}^{i-1},\; \{Y_j\}_{j=1}^{i-1}=\{y_j\}_{j=1}^{i-1},\;Y_i = 0\;\right] > 9/10.\end{equation}
Let us now exclude the possibility that many passes end in status $\mathsf{B}_1$.
\begin{lemma}
\label{lem5manyB1}
$$\mathbb{P}\left[\sum_{i=1}^{\lfloor \epsilon'n\rfloor} Y_i > \frac{1}{10}\lfloor \epsilon'n \rfloor\right] \xrightarrow{n \to \infty} 0.$$
\end{lemma}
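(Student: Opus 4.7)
The plan is to charge each pass ending in $\mathsf{B}_1$ against the total number of matchings whose random partner $h'$ lands on an already-preactive vertex, and to control the latter via a Chernoff estimate after choosing $\epsilon$ small enough from the outset. To that end, first note that if pass $i$ ends in $\mathsf{B}_1$, then at step~(S1) the preactive vertex $\bar x_i$ has fewer than $S/2$ remaining half-edges; since $\bar x_i \in I$ was born with at least $S$ half-edges and lost exactly one at the moment it became preactive, it must later have been selected as $v'$ in at least $S/2 - 1$ matchings (as long as a vertex is only preactive and not yet active, the only way for it to lose a half-edge is to be chosen as the random partner of some matching). Setting
\[
H := \#\bigl\{\text{matchings whose } h' \text{ is attached to a currently-preactive vertex}\bigr\},
\]
the hits counted for different passes correspond to different vertices, so $(S/2 - 1)\sum_i Y_i \le H$. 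It therefore suffices to prove $\mathbb{P}[H > \epsilon' n S/30] \to 0$.

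Next I would use robustness to bound the conditional probability that an individual matching contributes to $H$. Each pass creates at most three new preactive vertices (this is the purpose of the cap $\bar C \ge 3$ in (S3)--(S5)), so at most $3\epsilon' n$ vertices are ever labeled preactive; each belongs to $I$ and hence has total degree at most $2S$, so the currently-preactive vertices have total remaining half-edges at most $6\epsilon' n S$ at any instant. Since at most $\lambda^{-(2a-3)}$ vertices are identified per pass, at most $\epsilon n/2$ vertices are identified in total, and part~$(iv)$ of Lemma~\ref{lem5degs} guarantees $|\mathcal{H}| \ge \mu n/2$ throughout the algorithm. Each matching thus contributes to $H$ with conditional probability at most $q := 24\epsilon' S/\mu$, regardless of the past.

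Third, I would control the total number $N$ of matchings performed over all $\lfloor\epsilon' n\rfloor$ passes. All half-edges of any identified vertex eventually get matched, so $N$ is at most half of the total degree of the identified vertices. These vertices are discovered by successively following uniformly chosen half-edges, hence are sampled in a size-biased manner; the hypothesis $a > 3$ ensures that the size-biased law has finite mean $\nu = E[D^2]/\mu$, and, after truncating the degrees at a large but $n$-independent cutoff and separately controlling the rare event that a vertex above the cutoff is ever visited during the $\lfloor\epsilon' n\rfloor$ passes, a standard concentration argument yields $N \le \nu \epsilon n$ with probability tending to $1$ as $n \to \infty$.

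Finally, I would choose $\epsilon$ small enough from the outset that $24 \nu \epsilon/\mu < 1/60$, so that on the high-probability event $\{N \le \nu \epsilon n\}$ one has $Nq \le \epsilon' nS/120$, a quarter of the threshold $\epsilon' n S/30$. Conditionally on this event and on the per-matching bound $q$, the variable $H$ is stochastically dominated by $\operatorname{Bin}(\lfloor\nu \epsilon n\rfloor, q)$, whose mean is at most $\epsilon' n S/120$, and a Chernoff inequality yields $\mathbb{P}[H > \epsilon' n S/30] \le \exp(-c_\lambda n) \to 0$, completing the proof. The main technical delicacy I anticipate is the bound on $N$: the heavy tail of $p$ a priori allows a single pass to absorb a vertex of very large degree and execute a vast number of matchings, and ruling this out relies crucially on the finite second moment afforded by the assumption $a > 3$.
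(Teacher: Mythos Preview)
Your argument is correct in outline but takes a different and more laborious route than the paper. The paper charges each $\mathsf{B}_1$ pass to random choices $h'$ landing on vertices of $I$ as a whole, not just on preactive ones: if pass $i$ ends in $\mathsf{B}_1$, its $\bar x_i\in I$ must have been hit at least $S/2$ times since being identified, so $\sum_i Y_i>\tfrac{1}{10}\lfloor\epsilon'n\rfloor$ forces more than $\tfrac{S}{20}\lfloor\epsilon'n\rfloor$ random choices to land in $I$. The gain is that by part~(iii.) of Lemma~\ref{lem5degs}, half-edges attached to $I$ form a fraction at most $\tfrac{2c_1}{(a-2)}S^{-(a-2)}$ of all half-edges, so each random $h'$ lands in $I$ with at most this tiny probability. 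Comparing the expected number of such hits with the threshold, the ratio is of order $\lambda/\log^{2(a-1)}(1/\lambda)$ even under the trivial bound $N\le\sum_x d_x=O(n)$ on the total number of matchings. This completely bypasses your step~3 and the size-biased analysis; the ``main technical delicacy'' you anticipated simply does not arise, and no appeal to $a>3$ beyond what is already encoded in robustness is needed.

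Your step-3 sketch also contains a slip worth fixing if you pursue your route. With an $n$-independent cutoff $K$, the event that some vertex of degree above $K$ is visited is \emph{not} rare: there are order-$n$ matchings and each lands on such a vertex with probability bounded away from $0$, so this happens with probability tending to $1$. What does work is to bound the \emph{total degree} of all vertices of degree above $K$ in the entire graph by $n\sum_{m>K}m\,p(m)$, which is $o(n)$ for $K$ large (finite first moment of $p$ suffices here); combined with the trivial bound $K\cdot|\{\text{identified}\}|\le K\epsilon n/2$ on the below-cutoff contribution, this yields $N\le f(\epsilon)n$ with $f(\epsilon)\to 0$ as $\epsilon\to 0$, after which your step~4 goes through.
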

\begin{proof}We start remarking that, for $\{Y_i = 1\}$ to occur, there must exist a vertex $x \in I$  such that\\
$\bullet\;$ $x$ is identified before pass $i$;\\
$\bullet\;$ from the moment $x$ is identified to the beginning of pass $i$, more than $S/2$ half-edges of $x$ are chosen for matchings;\\
$\bullet\;$ $x$ is the $\mathsf{preactive}$ vertex of highest order when pass $i$ starts.

Let $h_1, \ldots, h_N$ be the sequence of half-edges chosen at random by the algorithm. As explained above, we have $N \leq \epsilon n$. By $(iii.)$ of Lemma \ref{lem5degs}, regardless of what happened before $h_j$ is chosen, the probability that $h_j$ belongs to a vertex of $I$ is less than $\frac{2c_1}{(a-2)S^{a-2}}$. On the other hand, for $\{\sum Y_i > (1/10)\lfloor \epsilon'n \rfloor\}$ to occur, more than $\frac{1}{10} \lfloor \epsilon'n\rfloor \frac{S}{2}$ half-edges of vertices of $I$ must be chosen. The probability of this is less than
$$P\left[\mathsf{Bin}\left(\lfloor \epsilon n \rfloor,\;\frac{2c_1}{(a-2)S^{a-2}} \right) > \frac{1}{10} \lfloor\epsilon'n\rfloor \frac{S}{2}\right].$$
By Markov's Inequality, this is less than
$$\begin{aligned} \frac{\lfloor \epsilon n \rfloor \cdot \frac{2c_1}{(a-2)S^{a-2}}}{\frac{1}{10} \lfloor\epsilon'n\rfloor \frac{S}{2}} &= C\frac{\epsilon}{\epsilon'} \frac{1}{S^{a-1}} = C \frac{1}{\lambda^{2a-3}} \frac{\lambda^{2(a-1)}}{M^{a-1}\log^{2(a-1)}(1/\lambda)} = C' \frac{\lambda}{\log^{2(a-1)}(1/\lambda)},
\end{aligned}$$
where $C, C'$ are constants that do not depend on $\lambda$ or $n$. The above can be made as small as desired by taking $\lambda$ small.
\end{proof}

\begin{proposition}$\displaystyle{\P\left[W_{\lfloor \epsilon'n \rfloor} > \delta' n\right] \xrightarrow{n\to\infty} 1}.$
\end{proposition}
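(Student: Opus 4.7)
The plan is to express $W_{\lfloor \epsilon' n \rfloor}$ as $\sum_{i=1}^{\lfloor \epsilon' n \rfloor - 1} X_i$ (since $W_1 = 0$) and to bound this sum from below, exploiting two facts: $X_i \ge -1$ deterministically, and by \eqref{eqn5Xaflem}, $X_i = 2$ with conditional probability at least $9/10$ whenever $Y_i = 0$.

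First I would enlarge the probability space to support an i.i.d.\ $\mathsf{Bernoulli}(9/10)$ sequence $(B_i)_{i \ge 1}$. Using a Strassen-type coupling together with \eqref{eqn5Xaflem} applied inductively in $i$, I would arrange that almost surely $\mathds{1}\{X_i = 2\} \ge B_i$ for every $i \le \lfloor \epsilon' n \rfloor - 1$ with $Y_i = 0$. This is the main technical step, because the pairs $(X_i, Y_i)$ are not independent across $i$, so the coupling must be built one step at a time conditionally on the filtration generated by past passes; it is however routine given \eqref{eqn5Xaflem}.

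Next I would apply Lemma~\ref{lem5manyB1} to get $\sum_{i} Y_i \le \lfloor \epsilon' n \rfloor / 10$ with high probability, and Hoeffding's inequality for the independent Bernoullis $(B_i)$ to get $\sum_{i=1}^{\lfloor \epsilon' n \rfloor - 1} B_i \ge \frac{4}{5}\lfloor \epsilon' n \rfloor$ with high probability. On the intersection of these two events, the count $M := \#\{i \le \lfloor \epsilon' n \rfloor - 1 : X_i = 2\}$ satisfies
\[
M \ge \sum_i B_i \, \mathds{1}\{Y_i = 0\} \ge \tfrac{4}{5} \lfloor \epsilon' n \rfloor - \tfrac{1}{10} \lfloor \epsilon' n \rfloor = \tfrac{7}{10} \lfloor \epsilon' n \rfloor.
\]

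Finally, using the deterministic bound $X_i \ge -1$, on the same event
\[
W_{\lfloor \epsilon' n \rfloor} \ge 2M - (\lfloor \epsilon' n \rfloor - 1 - M) = 3M - \lfloor \epsilon' n \rfloor + 1 \ge \tfrac{11}{10} \lfloor \epsilon' n \rfloor,
\]
which exceeds $\delta' n = \epsilon' n / 2$ for $n$ large enough, completing the argument. The hard part is the coupling construction in the second paragraph; the remainder is arithmetic together with the two standard concentration inputs.
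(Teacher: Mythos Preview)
Your proposal is correct and follows essentially the same route as the paper. The paper makes your ``Strassen-type coupling'' explicit via a random mapping representation: it introduces i.i.d.\ uniforms $U_i$, defines $X_i'$ through the conditional inverse c.d.f., and observes that $\{Y_i'=0,\;X_i'\neq 2\}\subset\{U_i\le 1/10\}$, which is exactly the statement that $\mathds{1}\{X_i=2\}\ge B_i$ with $B_i=\mathds{1}\{U_i>1/10\}$ on $\{Y_i=0\}$. The paper then bounds $|\{i:Y_i=0,\;X_i\neq 2\}|$ by $|\{i:U_i\le 1/10\}|$ and concludes by the law of large numbers together with Lemma~\ref{lem5manyB1}, landing on the same count $M\ge \tfrac{7}{10}\lfloor\epsilon'n\rfloor$ and the same arithmetic $W\ge 3M-\lfloor\epsilon'n\rfloor$ that you wrote down.
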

\begin{proof}
We start giving a random mapping representation of the random variables $X_1, \ldots, X_{\lfloor\epsilon'n\rfloor}, Y_1, \ldots, Y_{\lfloor \epsilon'n \rfloor}$.
Given sequences $\{x_j\}_{j=1}^{i-1},\; \{y_j\}_{j=1}^{i}$ and $s \in (0,1)$, let
$$\begin{aligned}
&\upphi\left(s,\{x_j\}_{j=1}^{i-1},\{y_j\}_{j=1}^i\right) = m \text{ if }\\
&\P\left[X_i \leq m-1\;|\;\{X_j\}_{j=1}^{i-1} = \{x_j\}_{j=1}^{i-1},\;\{Y_j\}_{j=1}^{i} = \{y_j\}_{j=1}^{i}\right] \\&\qquad \qquad< s \leq \P\left[X_i \leq m\;|\;\{X_j\}_{j=1}^{i-1} = \{x_j\}_{j=1}^{i-1},\;\{Y_j\}_{j=1}^{i} = \{y_j\}_{j=1}^{i}\right]
\end{aligned}$$
Likewise, let
$$\begin{aligned}
&\uppsi\left(s,\{x_j\}_{j=1}^{i-1},\{y_j\}_{j=1}^{i-1}\right)\\&\qquad\qquad=\left|\begin{array}{ll} 0 &\text{ if } s \leq \P\left[Y_i = 0\;|\;\{X_j\}_{j=1}^{i-1} = \{x_j\}_{j=1}^{i-1},\;\{Y_j\}_{j=1}^{i-1} = \{y_j\}_{j=1}^{i-1}\right]\\1&\text{otherwise.}\end{array}\right.
\end{aligned}$$
(when we write only $\upphi(s),\; \uppsi(s)$, we mean the functions above for $X_1$ and $Y_1$, with no conditioning in the probabilities that define them).
Let $U_1, U_2, \ldots,\;V_1, V_2, \ldots$ be independent random variables with the uniform distribution on $(0,1)$. Set $X_1' =\upphi(U_1),\; Y_1' = \uppsi(V_1)$ and recursively define, for $1 < i < \epsilon'n$,
$$\begin{aligned}
&Y_{i+1}' = \uppsi\left(V_{i+1}, \{X_j'\}_{j=1}^{i}, \{Y_j'\}_{j=1}^i\right);\\
&X_{i+1}' = \upphi\left(U_{i+1},\{X_j'\}_{j=1}^{i}, \{Y_j'\}_{j=1}^{i+1}\right).
\end{aligned}$$
Now, clearly $\{X_i', Y_i'\}_{i=1}^{\lfloor \epsilon'n\rfloor}$ has the same distribution as $\{X_i, Y_i\}_{i=1}^{\lfloor \epsilon'n\rfloor}$. By (\ref{eqn5Xaflem}), we have $\{Y_i' = 0,\;X_i' \neq 2 \} \subset \{U_i \leq \frac{1}{10}\}$. We can now estimate
$$\begin{aligned}
\P\left[W_{\lfloor \epsilon'n\rfloor} < \frac{\epsilon'}{2}n\right] &\leq \P\left[\sum_i Y_i > \frac{1}{10}\epsilon'n\right] + P\left[\left|\{i: Y_i =0, X_i \neq 2\} \right| > \frac{1}{5}\epsilon'n\right]\\
&\leq \P\left[\sum_i Y_i > \frac{1}{10}\epsilon'n\right] + P\left[\left|\left\{i \leq \epsilon'n: U_i \leq \frac{1}{10}\right\}\right| > \frac{1}{5}\epsilon'n\right].
\end{aligned}$$
The first of these probabilities vanishes by Lemma \ref{lem5manyB1}, and the second by the Law of Large Numbers.
\end{proof}

We will define our subgraph $G^{n \prime}$ only on the event $\{W_{\lfloor \epsilon'n \rfloor > \delta'n}\}$. Let
$$\begin{aligned}
&i_0 = \sup\{i: W_i = 0\};\\
&V^{n\prime} = \text{vertices that have been identified in passes $i_0, \ldots, \lfloor \epsilon'n\rfloor$};\\
&E^{n\prime} = \text{edges that have been constructed by passes $i_0, \ldots, \lfloor \epsilon'n\rfloor$ and are }\mathsf{marked};\\
&G^{n\prime} = (V^{n\prime}, E^{n\prime});\\
&\deg'(x) = |\{e \in E^{n\prime}: x \in e\}|;\\
&I' = \{x \in V^{n\prime}: x \text{ has been activated by a pass after $i_0$ and } \deg'(x) \geq S/4\}.
\end{aligned}$$
Let $\delta = \delta'/2$ and note that $|I'| > \delta n$. This follows from the fact that, in the sequence $X_{i_0},\ldots, X_{\lfloor \epsilon'n\rfloor}$, for every $i$ such that $X_i \neq -1$, the vertex that was activated in pass $i$ must be in $I'$. Since there are at least $\frac{\delta'}{2}n$ such $i$'s, we get $|I'| > \delta n$.

As we have already mentioned, for $x, y \in I'$ we put $x \stackrel{*}{\sim} y$ if $x$ and $y$ are connected by a path (in $G^{n\prime}$) that contains no other elements of $I'$. If it exists, this path is necessarily unique and has length less than $20a\log(1/\lambda)$. We then define $T$ as the tree with vertex set $I'$ and edge set $\{\{x,y\}: x,y\in I',\;x\stackrel{*}{\sim}y\}$.

Given a vertex $x \in G^{n\prime}$, we will denote by $S(x)$ the set containing $x$ and its neighbors (in $G^{n\prime}$). If $x, y \in I',\; x\stackrel{*}{\sim} y$, let $b(x,y)$ be the set of vertices of $G^{n\prime}$ in the unique path from $x$ to $y$.

Our goal now is to use Proposition $\ref{discrete}$ to show Theorem $\ref{thm1cd1}$. To this end, we will couple the contact process on $G^{n \prime}$ (starting from full occupancy) and a growth process on $T$ (again starting from full occupancy). This comes down to a coupling between the Harris system on $G^{n \prime}$ and the Bernoulli random variables used to define the growth process.

We suppose given the Harris system on $G^n$ which we will regard as a Harris system on $G^{n\prime}$ by ignoring non relevant Poisson processes. We will consider the process on time intervals of size $\kappa = e^{30a\log(1/\lambda)}$; this scale is chosen because it is large enough for an infection from a site $x \in I'$ to reach $y \in I'$ with $x \stackrel{*}{\sim} y$ but smaller than the extinction time for the process restricted to $S(x)$. The following lemma and proposition will make this precise.

Given a set of vertices $U$ in a graph $\Gamma$ and $\xi \in \{0,1\}^U$, we will say that $U$ is \textit{infested} in $\xi$ if $|\{x\in U: \xi(x) = 1\}| \geq \frac{\lambda}{20}|U|$. In \cite{MVY} the following is proved.

\begin{lemma}
\label{lem6infested} Given $\lambda > 0$, there exist $\bar c_6$ and $N_0$ such that the following holds. Let $\Gamma$ be a star graph consisting of one vertex $x$ of degree $\frac{N}{\lambda^2}$, where $N \geq N_0$, and all other vertices of degree 1. Then, for the contact process with parameter $\lambda$ on $\Gamma$,\medskip\\
$(i.)\; \displaystyle{P_{\Gamma, \lambda}\left[\Gamma \text{ is infested in } \xi_1 \left| \xi_0 = \{x\}\right.\right] > 1/2}$;\medskip\\
$(ii.)\; \displaystyle{P_{\Gamma, \lambda}\left[\Gamma \text{ is infested in } \xi_{e^{\bar c_6 N}} \left| \Gamma \text{ is infested in } \xi_0\right.\right] > 1 - e^{-\bar c_6 N}}$.
\end{lemma}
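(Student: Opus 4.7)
The two parts of the lemma capture short-time saturation and long-time metastability on the star, and I would prove them separately.

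For (i), the plan is to work in the graphical construction, conditioning on the event $\cA = \{x \text{ has no death mark in } [0,1/2]\}$, which has probability $e^{-1/2} > 1/2$. On $\cA$ the center is infected throughout $[0,1/2]$, so conditionally on $\cA$ the leaves evolve independently, each as a two-state Markov chain with infection rate $\lambda$ and recovery rate $1$ started from $0$. A direct computation gives
\[
P(\xi_{1/2}(y)=1 \mid \cA) = \frac{\lambda}{1+\lambda}\bigl(1-e^{-(1+\lambda)/2}\bigr) \ge c_1 \lambda
\]
for $\lambda$ small and some $c_1 > 1/10$. Setting $K = N/\lambda^2$ for the number of leaves, Chernoff concentration then forces $N_{\mathrm{inf}}(1/2) := |\{y : \xi_{1/2}(y) = 1\}| \ge c_1 \lambda K / 2$ with probability $\ge 1 - e^{-c N}$. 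Finally, a short-time stability bound from $1/2$ to $1$ --- a Chernoff bound on the independent recovery marks over $[1/2,1]$, which fire at each infected leaf with rate $1$ --- shows that at least $\lambda K/20$ of these leaves are still infected at time $1$, so that $\Gamma$ is infested in $\xi_1$.

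For (ii), I would set up a Lyapunov/drift argument on $Z_t$, the number of infected leaves at time $t$. In the region $\{Z_t \ge \lambda K/20\}$ the reinfection rate of $x$ by the leaves is at least $\lambda^2 K/20 = N/20$, which dominates the recovery rate $1$; consequently $x$ is infected during a proportion $\ge 1 - O(1/N)$ of any short time window, with an exponentially small (in $N$) deviation probability. Given that $x$ is infected, $Z_t$ has instantaneous drift $\lambda(K - Z_t) - Z_t$, uniformly positive and of order $\lambda K$ throughout $\{Z_t \ge \lambda K/20\}$. A comparison of $(Z_t)$ with a biased birth--death chain on $\{0,\ldots,K\}$, or equivalently a Lyapunov function of the form $V(z) = e^{-\theta (z - \lambda K/20)}$ for a suitably small $\theta > 0$, then bounds the probability of exiting $\{Z_t \ge \lambda K/20\}$ within time $e^{\bar c_6 N}$ by an exponentially small quantity, proving (ii).

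The hard part is (ii): the failure probability must be exponentially small in $N$ over a time window of length $e^{\bar c_6 N}$. This forces the drift/fluctuation estimate to be proved with exponential precision on short blocks (say, of length $O(\log N)$) and then iterated via the strong Markov property over $e^{\bar c_6 N}$ blocks with a union bound, carefully propagating the conditioning that the trajectory stays in the infested region. Part (i) is essentially a Chernoff estimate; its only subtlety is obtaining a probability above $1/2$, which is handled by conditioning on $x$ being healthy during the first half of $[0,1]$ rather than the whole interval.
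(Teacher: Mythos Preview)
The paper does not prove this lemma; it simply cites \cite{MVY} (Mountford, Valesin, Yao, 2011), where the result is established. So there is no in-paper proof to compare against.

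Your sketch is the standard approach to this kind of star-graph estimate and is essentially correct. Part~(i) is fine as written: conditioning on the center surviving through $[0,1/2]$, the leaves are i.i.d.\ two-state chains, and a Chernoff bound plus survival of leaves over $[1/2,1]$ gives infestation with probability exceeding $e^{-1/2}(1-o_N(1)) > 1/2$. The only thing to be careful about is that the constant $c_1$ you extract must beat $1/20$ after multiplying by the survival factor $e^{-1/2}$ and the Chernoff slack; your numerical check ($c_1 \approx 0.39$) does this.

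For part~(ii), your drift picture is right, but note that $Z_t$ alone is not Markovian: the birth rate of $Z_t$ depends on whether the center $x$ is currently infected. You acknowledge this by arguing that $x$ is infected for a $1 - O(1/N)$ fraction of time, but turning that into a rigorous comparison with a biased birth--death chain requires either working with the full Markov process $(\xi_t(x), Z_t)$ and a two-variable Lyapunov function, or else explicitly bounding the excursions of $x$ in state $0$ (each such excursion has length stochastically dominated by an exponential of rate $\lambda^2 K/20 = N/20$, during which $Z_t$ can only decrease, and then summing the damage). Either route works; the block-iteration scheme you describe is the right skeleton for assembling the exponential bound over time $e^{\bar c_6 N}$.
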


In the case of the star graph given by a site $x \in I'$ and its neighbors in $G^{n\prime}$, the $N$ of the above lemma is equal to $\lambda^2 \deg'(x) = M\log^2\left(\frac{1}{\lambda}\right)$. The extinction time for the contact process restricted to $S(x)$ and started from full occupancy will then be with high probability larger than $e^{\bar c_6 M \log^2(1/\lambda)} = \left( \frac{1}{\lambda}\right)^{\bar c_6 M\log(1/\lambda)} > \left(\frac{1}{\lambda}\right)^{21a\log(1/\lambda)}$ as long as $M > \frac{21a}{\bar c_6}$. Now, if $x, y \in I'$ and $x\stackrel{*}{\sim} y$, the probability that an infection in $S(x)$ is transmitted along $b(x, y)$, reaches $y$ within time $20a\log\left(\frac{1}{\lambda}\right)$ and then infests $S(y)$ within time 1 is larger than $\frac{1}{2}\left(\lambda/(1+\lambda)\right)^{20a\log(1/\lambda)}$. If $S(x)$ holds the infection for $(1/\lambda)^{21a\log(1/\lambda)}$ units of time, there will be $\displaystyle{\frac{(1/\lambda)^{21a\log(1/\lambda)}}{20a\log(1/\lambda)+1}}$ chances for such a transmission to occur. Comparing the number of chances with the probability of a transmission, we see that a transmission will occur with very high probability. These considerations lead to

\begin{proposition}
For any $\sigma > 0$, $M$ can be chosen large enough so that the following holds. Assume that $x, y \in I',\; x \stackrel{*}{\sim} y$ and, in $\xi_0,\;S(x)$ is infested. Let $(\xi_t')$ denote the process restricted to $S(x) \cup S(y)\;\cup b(x,y)$. Then, with probability larger than $1-\sigma$, both $S(x)$ and $S(y)$ are infested in $\xi_\kappa'$.\end{proposition}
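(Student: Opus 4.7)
The plan is to combine two ingredients: persistence of the infestation at the stars $S(x)$ and $S(y)$ over the full time window $[0,\kappa]$, and propagation of the infection from $S(x)$ to $S(y)$ along the path $b(x,y)$ during that window, achieved by making many nearly-independent transmission attempts and exploiting that the number of attempts overwhelms the (small) per-attempt success probability.

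\textbf{Step 1 (persistence at $S(x)$).} Since every $z\in I'$ has at least $S/4$ neighbours of degree $1$ in $G^{n\prime}$, the sub-star formed by $x$ and those leaves fits the hypothesis of Lemma~\ref{lem6infested} with $N$ of order $M\log^2(1/\lambda)$. By attractiveness, the process restricted to $S(x)\cup S(y)\cup b(x,y)$ dominates the process restricted to this sub-star, so Lemma~\ref{lem6infested}(ii) applies verbatim. Partitioning $[0,\kappa]$ into blocks of length $e^{\bar c_6 N}$ and iterating the lemma yields that $S(x)$ is infested at all block-endpoints in $[0,\kappa]$ (hence, after absorbing constants, infested at all times in $[0,\kappa]$) with probability at least $1-\kappa\, e^{-\bar c_6 N}$, which, after choosing $M$ large enough so that $\bar c_6 M$ exceeds the exponent appearing in $\kappa$, is at least $1-\sigma/3$.

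\textbf{Step 2 (transmission).} Cut $[0,\kappa]$ into $K=\lfloor \kappa/(20a\log(1/\lambda)+1)\rfloor$ consecutive windows of length $20a\log(1/\lambda)+1$. Declare a window \emph{successful} if, given that $S(x)$ is infested at its left endpoint, a chain of transmissions along the path $b(x,y)$ reaches $y$ within $20a\log(1/\lambda)$ units of time and $y$ in turn infests $S(y)$ within the following unit of time. The first sub-event has probability at least $\frac{1}{2}(\lambda/(1+\lambda))^{20a\log(1/\lambda)}$, because infestation of $S(x)$ guarantees that $x$ acquires an infection quickly from one of its $\geq \lambda |S(x)|/20$ infected leaves, after which the chain along $b(x,y)$ survives with the stated probability; the second sub-event has probability at least $1/2$ by Lemma~\ref{lem6infested}(i) applied to the star around $y$. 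Conditionally on the event of Step~1, the Markov property at window boundaries makes the $K$ attempts independent, each with success probability at least $p_\lambda:=\tfrac14(\lambda/(1+\lambda))^{20a\log(1/\lambda)}$. Hence the probability that no window succeeds is at most $(1-p_\lambda)^{K}\le e^{-K p_\lambda}\le\sigma/3$, provided $\kappa$ is large enough compared to $1/p_\lambda$.

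\textbf{Step 3 (persistence at $S(y)$).} Let $t_0\le\kappa$ be the right endpoint of the first successful window. Then $S(y)$ is infested at time $t_0$, and another application of Lemma~\ref{lem6infested}(ii)—again valid within the restricted process by attractiveness, with $N$ of the same order as before—gives that $S(y)$ is still infested at time $\kappa$ with probability at least $1-\sigma/3$. A union bound over the three steps produces the claimed $1-\sigma$ lower bound.

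The main obstacle is the reconciliation of the two time scales. The per-window success probability $p_\lambda$ is of the form $e^{-\Theta(\log^2(1/\lambda))}$, which forces $\kappa$ to be at least of that order for Step~2 to succeed; simultaneously, Step~1 requires that the star-persistence time $e^{\bar c_6 N}=e^{\bar c_6 M\log^2(1/\lambda)}$ exceed $\kappa$. These are both achieved by choosing the universal constant $M$ large enough (depending on $\sigma$) so that $\bar c_6 M$ dominates the constant in the exponent of $\kappa$, and this is precisely the point where the freedom to enlarge $M$ is used.
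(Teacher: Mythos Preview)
Your three-step argument is exactly the one the paper sketches in the paragraph immediately preceding the proposition: use Lemma~\ref{lem6infested} to keep $S(x)$ infested, make many nearly-independent transmission attempts along $b(x,y)$ in disjoint time windows, and then use Lemma~\ref{lem6infested} again to maintain the infestation at $S(y)$; the paper simply leaves the union bound and the third step implicit.

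One quantitative wrinkle deserves attention. You correctly note that $p_\lambda=e^{-\Theta(\log^2(1/\lambda))}$, so Step~2 requires $\kappa$ of order at least $e^{c\log^2(1/\lambda)}$. However, the paper defines $\kappa=e^{30a\log(1/\lambda)}$, which is only $e^{\Theta(\log(1/\lambda))}$, and this value does \emph{not} depend on $M$; hence your concluding claim that enlarging $M$ rescues Step~2 is not right as written, since $M$ only enters through the persistence bound $e^{\bar c_6 N}$ in Steps~1 and~3. This is not a flaw in your strategy but rather an inconsistency already present in the paper: its own heuristic uses the time scale $(1/\lambda)^{21a\log(1/\lambda)}=e^{21a\log^2(1/\lambda)}$ for the number of attempts, which strongly suggests the intended definition is $\kappa=e^{30a\log^2(1/\lambda)}$. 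With that reading, your argument goes through and the role of $M$ is exactly as you describe: choose $M$ so that $\bar c_6 M$ exceeds $30a$ (plus a margin for the error term), and both time-scale constraints are satisfied simultaneously.
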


Let $r \in \N,\;x, y \in I'$ with $x \stackrel{*}{\sim} y$ and $(\xi_t)$ be the contact process on $G^{n\prime}$ started from full occupancy. Put $I^r_{x,y} = 1$ if one of the following holds:\\
$\bullet\;$ $S(x)$ is infested in $\xi_{\kappa r}$ and
$$\left|\left\{\begin{array}{c} z \in S(y): \exists w\in S(x): \xi_{\kappa r}(w) = 1,\\(w,\kappa r) \leftrightarrow (z,\kappa(r+1)) \text{ inside } b(x,y) \end{array}\right\}\right| > \frac{\lambda}{20}|S(y)|;$$
$\bullet\;$ $S(x)$ is not infested in $\xi_{\kappa r}$.\\
Otherwise put $I^r_{x,y} = 0$. The second condition above is just present to guarantee that $I^r_{x,y} = 1$ with high probability regardless of $\xi_{\kappa r}$. As will soon be seen, this artificial assignment will not be problematic. Put $I^r_{x,x} = 1$ if one of the following holds:\\
$\bullet\;$ $S(x)$ is infested in $\xi_{\kappa r}$ and $$\left|\left\{\begin{array}{c}z \in S(x): \exists w \in S(x): \xi_{\kappa r}(w) = 1,\\ (w, \kappa r) \leftrightarrow (z, \kappa(r+1)) \text{ inside }S(x)\end{array}\right\}\right| > \frac{\lambda}{20}|S(x)|;$$
$\bullet\;$ $S(x)$ is not infested in $\xi_{\kappa r}$.\\
Otherwise put $I^r_{x,x} = 0$

Let $\eta_0 \equiv 1$ and, for $r \geq 0$, $$\eta_{r+1}(x) = \mathds{1}\left\{\begin{array}{c}\eta_r(x) = 1 \text { and } I^r_{x,x} = 1 \text{ or, for some }\\\text{$y$ with $x\stackrel{*}{\sim}y,\;\eta_r(y) = 1$  and $I^r_{y,x} = 1$.} \end{array}\right\}.$$
Notice that, if a sequence $x_1, x_2, \ldots, x_R$ in $I'$ is such that, for each $r$, either $x_r = x_{r+1}$ and $I^r_{x_r, x_r} = 1$ or $x_r \stackrel{*}{\sim} x_{r+1}$ and $I^r_{x_r, x_{r+1}} = 1$, then we will have $\eta_R(x_R) = 1$ and $S(x_R)$ will be infested in $\xi_{\kappa R}$.

Now, using a result of Liggett, Schonmann Stacey \cite{LSchS} (see also Theorem B26 in \cite{lig99}), given $p \in (0,1)$ we can choose $M$ large enough that the measure of the field $\{\{I^r_{x,x}\},\{I^r_{x,y}\}\}$ stochastically dominates i.i.d. Bernoulli($p$) random variables. We then have
\begin{corollary}
For any $p > p_c(1)$, if $M$ is large enough, then $\{\eta_r\}$ dominates a growth process on $T$ defined from i.i.d. Bernoulli($p$) random variables.
\end{corollary}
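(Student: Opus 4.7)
The plan is to derive this corollary directly from the stochastic domination of $\{I^r_{x,y}\}$ by i.i.d.\ Bernoulli random variables that was established in the paragraph immediately preceding the statement, combined with the monotonicity of the growth process in its driving field. The work at the level of Harris systems and infection paths has effectively been done; what remains is to transfer the domination from the driving field to the growth processes themselves.

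First, I would fix $p > p_c^{(1)}$ and choose $M$ large enough that the joint law of $\{I^r_{x,x}, I^r_{x,y}: r \ge 0,\; x,y \in I',\; x \stackrel{*}{\sim} y\}$ stochastically dominates an i.i.d.\ Bernoulli($p$) product measure, as furnished by Liggett--Schonmann--Stacey. By Strassen's theorem, one can then realize on a common probability space a family $\{\td{I}^r_{x,y}\}$ of i.i.d.\ Bernoulli($p$) random variables such that $\td{I}^r_{x,y} \le I^r_{x,y}$ pointwise almost surely for every admissible triple $(r,x,y)$.

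Next, define $(\td{\eta}_r)_{r \ge 0}$ on the vertex set $I'$ of $T$ by $\td{\eta}_0 \equiv 1$ and the same recursion as $\eta$ but with $I$ replaced by $\td{I}$: $\td{\eta}_{r+1}(x) = 1$ iff either $\td{\eta}_r(x) = 1$ and $\td{I}^r_{x,x} = 1$, or there exists $y$ with $x \stackrel{*}{\sim} y$ such that $\td{\eta}_r(y) = 1$ and $\td{I}^r_{y,x} = 1$. Because $T$ has maximum degree bounded by $4$ (Property~A), this $\td{\eta}$ is exactly an instance of the discrete-time growth process of Section~\ref{s:discrete} on $T$, driven by i.i.d.\ Bernoulli($p$) variables. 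I would then argue by induction on $r$ that $\td{\eta}_r \le \eta_r$ pointwise: the base case $r = 0$ is trivial since both are identically $1$, and for the inductive step, whenever $\td{\eta}_{r+1}(x) = 1$ is witnessed by some $y$ (possibly $y=x$), the inductive hypothesis $\td{\eta}_r(y) \le \eta_r(y)$ together with $\td{I}^r_{y,x} \le I^r_{y,x}$ forces $\eta_{r+1}(x) = 1$. This proves the stated domination.

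The only nontrivial step is the Liggett--Schonmann--Stacey comparison invoked in the preceding paragraph, and that is also where the main substantive obstacle lives: one has to check that $\{I^r_{x,y}\}$ has uniformly bounded range of dependence. This uses the fact that $I^r_{x,y}$ is determined (up to the artificial assignment on the event that $S(x)$ is not infested at time $\kappa r$) by the Harris system in the time slab $[\kappa r, \kappa(r+1)]$ restricted to the bounded region $S(x) \cup b(x,y) \cup S(y)$, together with the bound $|b(x,y)| \le 20a \log(1/\lambda)$ from the definition of $\stackrel{*}{\sim}$ and the degree bound of $4$ on $T$. The marginal $P[I^r_{x,y} = 1]$ can then be made as close to $1$ as desired by taking $M$ large, thanks to the Proposition stating that both $S(x)$ and $S(y)$ become infested within time $\kappa$ with probability at least $1 - \sigma$ whenever $S(x)$ starts out infested.
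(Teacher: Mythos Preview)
Your proposal is correct and matches the paper's approach: the paper offers no separate proof of the corollary, treating it as immediate from the Liggett--Schonmann--Stacey domination stated in the preceding paragraph, and your Strassen coupling plus monotonicity argument is the standard way to make that implication explicit. One small caveat: the field $\{I^r_{x,y}\}$ is not literally of bounded range (each $I^r_{x,y}$ depends on $\xi_{\kappa r}$ and hence on the entire past), but the artificial assignment you highlight is precisely what ensures the uniform conditional lower bound $P[I^r_{x,y}=1 \mid \text{everything else}] \ge 1-\sigma$ that the cited form of LSS actually uses.
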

\noindent This, the fact that $|I'| \geq \delta n$ and Proposition $\ref{discrete}$ give Theorem $\ref{thm1cd1}$.

\end{document}